\newcommand\Fl{\operatorname{Fl}}
\newcommand{\xddots}{%
  \raise 4pt \hbox {.}
  \mkern 6mu
  \raise 1pt \hbox {.}
  \mkern 6mu
  \raise -2pt \hbox {.}
}
\definecolor{todo}{rgb}{.80,.20,.20}
\definecolor{e-mail}{rgb}{0,.40,.80}
\definecolor{reference}{rgb}{.10,.40,.42}
\definecolor{mrnumber}{rgb}{.80,.40,0}
\definecolor{citation}{rgb}{0,.40,.80}
\newcommand{\ov}[1]{\overline{#1}}
\newcommand{\op}[1]{\operatorname{#1}}
\newcommand{\ovop}[1]{\ov{\op{#1}}}
\newcommand{\C}{\mathbb{C}} 
\newcommand{\Z}{\mathbb{Z}}
\newcommand{\Gr}{\operatorname{Gr}}
\newcommand{\QH}{\operatorname{QH}}
\renewcommand{\H}{\operatorname{H}}
\newcommand{\codim}{\operatorname{codim}}
\newcommand{\M}{\overline{\operatorname{M}}}
\newcommand{\sL}{\mathfrak{sl}}
\newtheorem{theorem}{Theorem}[section]
\newtheorem{corollary*}{Corollary}
\newtheorem{acknowledgement*}[theorem]{Acknowledgement}
\newtheorem{project*}{Project}
\newtheorem{projectone*}{Project One}
\newtheorem{projecttwo*}{Project Two}
\newtheorem{projectthree*}{Project Three}
\newtheorem{lemma}[theorem]{Lemma}
\newtheorem{Def/Prop}{Definition/Proposition}
\newtheorem{remark/definition}{Remark/Definition}
\newtheorem{example}[theorem]{Example}
\newtheorem{proposition}[theorem]{Proposition}
\newtheorem{defn/thm}[theorem]{Definition/Theorem}
\newtheorem{definition/lemma}[theorem]{Definition/Lemma}
\newtheorem{definition}[theorem]{Definition}
\newtheorem{thm}{Theorem}
\newtheorem*{conjecture*}{The GW $\equiv$ CB Conjecture}
\theoremstyle{remark}
\newtheorem{remark}[theorem]{Remark}
\definecolor{amethyst}{rgb}{0.6, 0.4, 0.8}
\definecolor{kellygreen}{rgb}{0.3, 0.73, 0.09}
\begin{document}

\title[The GW $\equiv$ CB conjecture]{On an equivalence of divisors on $\M_{0,n}$ from Gromov-Witten theory and Conformal Blocks} 

\author[L.~Chen]{L.~Chen}
\address{Linda Chen \newline \indent Department of Mathematics and Statistics, Swarthmore College, Swarthmore, PA 19081}
\email{lchen@swarthmore.edu}

\author[A.~Gibney]{A.~Gibney}
\address{Angela Gibney\newline \indent Department of Mathematics, Rutgers University, Piscataway, NJ 08854\newline \indent Department of Mathematics, University of Pennsylvania, Philadelphia, PA 19104-6395}
\email{angela.gibney@gmail.com}

\author[L.~Heller]{L.~Heller}
\address{Lauren Cranton Heller \newline \indent  Department of Mathematics, University of California, Berkeley, CA 94720}
\email{lch@math.berkeley.edu}

\author[E.~Kalashnikov]{E.~Kalashnikov}
\address{Elana Kalashnikov \newline \indent  Department of Mathematics, Harvard University, Cambridge, MA 02138 \newline \indent Faculty of Mathematics, Higher School of Economics, Moscow, Russia, 101000}
\email{kalashnikov@math.harvard.edu}

\author[H.~Larson]{H.~Larson}
\address{Hannah Larson \newline \indent  Department of Mathematics, Stanford University, Stanford, CA 94305}
\email{hlarson@stanford.edu}

\author[W.~Xu]{W.~Xu}
\address{Weihong Xu \newline \indent  Department of Mathematics, Rutgers University, Piscataway, NJ 08854}
\email{wx61@math.rutgers.edu}

\thanks{LC was partially supported by Simons Collaboration Grant 524354 and NSF Grant DMS-2101861. AG was partially supported by NSF Grant DMS-1902237. HL was partially supported by the Hertz Foundation and NSF GRFP Grant DGE-1656518.}

\subjclass[2020]{14H10 (primary), 81R10, 81T40, 14N35, 14N10, 14C20}
\keywords{moduli of curves, coinvariants and conformal blocks, affine Lie algebras, Gromov-Witten invariants,  enumerative problems, Schubert calculus, Grassmannians}

\begin{abstract}We consider a conjecture that identifies two types of base point free divisors on $\M_{0,n}$. The first arises from  Gromov-Witten theory of a Grassmannian.  The second comes from first Chern classes of vector bundles associated to simple Lie
algebras in type A. Here we
reduce  this conjecture  on $\M_{0,n}$ to the same statement for $n=4$.  A reinterpretation  leads to a proof of the conjecture on $\M_{0,n}$ for a large class, and we give sufficient conditions for the non-vanishing of these divisors.
 \end{abstract}
\date{}
\maketitle

\section{Introduction}
The moduli space $\M_{g,n}$   of $n$-pointed stable curves of genus $g$ is a fundamental object that gives insight into smooth curves and their degenerations. A projective variety such as $\M_{g,n}$  can be better understood by investigating its base point free divisors, which give rise to morphisms.   Moduli spaces of curves for different $g$ and $n$ are connected through tautological clutching and projection morphisms which impart a rich combinatorial structure. Cycles on $\M_{g,n}$  reflect this, and often are governed by recursions, and amenable to inductive arguments. Consequently, many questions can be reduced to  moduli of curves of smaller genus and fewer marked points. 

We study two families of base point free divisors on the smooth projective variety $\M_{0,n}$. The first are obtained from the Gromov-Witten theory of Grassmannians, and the second are first Chern classes of  globally generated vector bundles defined by  representations of a simple Lie algebra in type A, so-called conformal blocks divisors.  While quite different, in some cases they are given by the same data and believed to be numerically equivalent (see the {\em{GW $\equiv$ CB Conjecture}}).  The identification of   characteristic classes of  vector bundles with classes of geometric loci is interesting as it can lead to valuable information about associated maps and cones of divisors.

We prove two main results. In Theorem~\ref{thm:ConjReduction},  we show the GW $\equiv$ CB Conjecture on $\M_{0,n}$ can be reduced to the case $n=4$ by using the fact that both types of classes satisfy a factorization property with respect to pullback along tautological maps. In Theorem~\ref{GWCB}, we show the GW $\equiv$ CB Conjecture for divisors satisfying what we call the  \emph{column condition} (see Definition~\ref{ColumnDef}). As an application, in  Proposition~\ref{nonvanishing}, we give sufficient criteria for the non-vanishing of the GW and CB divisors, and in particular, conditions that guarantee their associated maps are nonconstant.

We next state the GW $\equiv$ CB Conjecture, and our results in more detail.  We also describe our methods and approach, which are varied, drawing from a  variety of techniques and facts
from Gromov-Witten theory and the theory of conformal blocks.

Given a collection of partitions $\lambda^\bullet=(\lambda^1,\dots,\lambda^n)$
satisfying $\sum_i|\lambda^i|=(r+1)(l+1)$, we obtain a \emph{GW-divisor}  $I^{1,\Gr_{r,r+l}}_{1,\lambda^{\bullet}}$ on $\M_{0,n}$ (see \S\ref{sec:GWclasses}). The same data 
determines $n$ simple modules over the Lie algebra $\sL_{r+1}$ and defines a vector bundle of coinvariants $\mathbb{V}(\sL_{r+1}, \lambda^{\bullet}, l)$ on $\M_{g,n}$ \cite{tuy},  which is globally generated on $\M_{0,n}$ \cite{fakhr}. The condition $\sum_i|\lambda^i|=(r+1)(l+1)$ means $\mathbb{V}(\sL_{r+1}, \lambda^{\bullet}, l)$ is {\em{critical level}} (see \S\ref{CL}). 

\smallskip
Such GW-divisors and critical level CB-bundles
are believed to be related:

\begin{conjecture*}\cite[Question~3.3]{BG}\label{ConG} Let  $\lambda^{\bullet}=(\lambda^1, \ldots, \lambda^n)$ be  partitions corresponding to Schubert classes in $\Gr_{r,r+l}$ such that   $\sum_i|\lambda^i|=(r+1)(l+1)$.  Then the GW-divisor  $I^{1,\Gr_{r,r+l}}_{1, \lambda^{\bullet}}$ on $\M_{0,n}$
 is
 numerically equivalent to the first Chern class of the critical level CB-bundle $\mathbb{V}(\sL_{r+1}, \lambda^{\bullet}, l)$.
\end{conjecture*}

The GW $\equiv$ CB Conjecture was proved for the case $l=1$ in \cite[Theorem~3.1]{BG}.  Note that Remark 3.2 and Question 3.3 of \cite{BG}  referred to the Grassmannians $\Gr_{1,r+1}$ and $\Gr_{l,r+l}$, respectively, but   instead correspond to the Grassmannians $\Gr_{r,r+1}$ and $\Gr_{r,r+l}$ in our notation.

\smallskip
Our first main result is to reduce the GW $\equiv$ CB Conjecture to the $n=4$ case. 
\begin{thm}\label{thm:ConjReduction}
GW $\equiv$ CB   on $\M_{0,4}$ implies that GW $\equiv$ CB on $\M_{0,n}$, for all $n\ge 4$.
\end{thm}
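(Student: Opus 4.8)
The plan is to detect the asserted equivalence on $\M_{0,n}$ by restricting to the one-dimensional boundary strata. These F-curves span $N_1(\M_{0,n})_{\Q}$, so it suffices to show that the GW-divisor $I^{1,\Gr_{r,r+l}}_{1,\lambda^{\bullet}}$ and the CB-divisor $c_1\mathbb{V}(\sL_{r+1},\lambda^{\bullet},l)$ have the same degree on every F-curve --- equivalently, the same pullback along every tautological morphism $\gamma\colon \M_{0,4}\to\M_{0,n}$ obtained by attaching a fixed rational tail carrying the points of each block of a partition $\{1,\dots,n\}=N_1\sqcup\cdots\sqcup N_4$. I would carry this out assuming GW $\equiv$ CB on $\M_{0,4}$.

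First I would record the factorization rule obeyed by each family under clutching. For conformal blocks this is Fakhruddin's factorization together with propagation of vacua: pulling $c_1\mathbb{V}(\sL_{r+1},\lambda^{\bullet},l)$ back along a clutching morphism produces a sum over a weight at the new node whose coefficients are ranks of CB-bundles, i.e.\ structure constants of the level-$l$ fusion ring of $\widehat{\sL}_{r+1}$. Restricting the tail factors of $\gamma$ to points annihilates the first Chern classes supported there, so iterating over the four tails expresses $\gamma^{*}c_1\mathbb{V}(\sL_{r+1},\lambda^{\bullet},l)$ as a $\Z_{\ge 0}$-combination $\sum_{\mu^{\bullet}} c_{\mu^{\bullet}}\, c_1\mathbb{V}(\sL_{r+1},\mu^{\bullet},l)|_{\M_{0,4}}$ over $4$-tuples $\mu^{\bullet}$ of level-$l$ weights, where $c_{\mu^{\bullet}}$ is the product over the four legs of the corresponding fusion ranks. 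The GW-divisors obey the parallel rule via the splitting axiom for genus-$0$ Gromov--Witten classes: using the dimension constraint defining the divisor class, and the fact that the stabilization morphism $\M_{0,n}(\Gr_{r,r+l},1)\to\M_{0,n}$ contracts bubbled positive-degree components with fewer than three special points, one gets $\gamma^{*}I^{1,\Gr_{r,r+l}}_{1,\lambda^{\bullet}} = \sum_{\mu^{\bullet}} c'_{\mu^{\bullet}}\, I^{1,\Gr_{r,r+l}}_{1,\mu^{\bullet}}|_{\M_{0,4}}$, where now $c'_{\mu^{\bullet}}$ is the product over the four legs of the relevant (classical, degree zero) intersection numbers on $\Gr_{r,r+l}$.

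Next I would match the two expansions. Because $\lambda^{\bullet}$ is of critical level and the fusion (equivalently, quantum) product cannot increase the total size of the weights, the only $\mu^{\bullet}$ contributing a nonzero term are themselves of critical level on $\M_{0,4}$: subcritical $\mu^{\bullet}$ either have $c'_{\mu^{\bullet}}=0$ or, by Fakhruddin's formula on $\M_{0,4}\cong\pp^1$, have $c_1\mathbb{V}(\sL_{r+1},\mu^{\bullet},l)|_{\M_{0,4}}=0$, and supercritical ones do not appear. For those surviving $\mu^{\bullet}$ one has $c_{\mu^{\bullet}}=c'_{\mu^{\bullet}}$: this is the coincidence of ranks of CB-bundles with the corresponding Littlewood--Richardson numbers of $\Gr_{r,r+l}$ in the top degree, a reflection of the identification of the Verlinde numbers of $\widehat{\sL}_{r+1}$ with Gromov--Witten invariants of $\Gr_{r,r+l}$ (Witten; Agnihotri). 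Given GW $\equiv$ CB on $\M_{0,4}$, each summand then agrees, $I^{1,\Gr_{r,r+l}}_{1,\mu^{\bullet}}|_{\M_{0,4}}\equiv c_1\mathbb{V}(\sL_{r+1},\mu^{\bullet},l)|_{\M_{0,4}}$, so the two combinations coincide term by term; letting $\gamma$ range over all F-curves yields GW $\equiv$ CB on $\M_{0,n}$.

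I expect the GW-side factorization to be the main obstacle. The CB side is Fakhruddin's theorem, but on the GW side one must push the virtual class through the splitting axiom, understand the stabilization morphism over the boundary --- in particular which positive-degree bubbles are contracted --- and carry out the dimension and critical-level bookkeeping so that the coefficients $c'_{\mu^{\bullet}}$ come out exactly as products of classical intersection numbers matching the fusion ranks $c_{\mu^{\bullet}}$. Keeping track, in the iterated factorization, of which $\M_{0,m}$-factors carry the divisor class and which carry only scalars, and verifying cleanly that the non-critical-level contributions on $\M_{0,4}$ drop out, are the secondary technical points.
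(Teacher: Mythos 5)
Your proposal follows essentially the same route as the paper: both reduce to intersections with $F$-curves, expand both sides via factorization (the CB side by Fakhruddin/TUY factorization as in Lemma~\ref{CLELemma}, the GW side by the splitting formula, which the paper simply quotes from \cite[Prop~2.2]{BG}), and then match summands using the $n=4$ hypothesis together with the identification of the leg coefficients. The only point you leave loose --- that the CB ranks on the legs equal the \emph{classical} degree-zero intersection numbers rather than quantum ones --- is exactly the paper's step (b), handled by the cohomological form of Witten's Dictionary after observing that the restricted data satisfy $\sum_{i\in N_j}|\lambda^i|+|(\mu^j)^*|=(r+1)(l+s)$ with $s\le 0$, so your ``critical-level bookkeeping'' does go through.
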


On $\M_{0,4}\cong \mathbb{P}^1$, the first Chern class is  the degree of the bundle.  We verify the GW $\equiv$ CB Conjecture for a class of divisors defined by partitions satisfying the following:
\begin{definition}\label{ColumnDef}Let $\#\lambda$ be the number of nonzero rows of a partition $\lambda$ or, equivalently, the height of the first column, so $\#\lambda= \lambda^T_1$ where $\lambda^T$ is the transpose to $\lambda$. We say that $\lambda^{\bullet}$ satisfies {\em{the column condition}} if  $\sum_{i=1}^n|\lambda^i|=(r+1)(l+1)$, and $\sum_{i=1}^n \#\lambda^i \le 2(r+1)$.\end{definition}
\begin{thm}\label{GWCB}
GW $\equiv$ CB holds on $\M_{0,n}$ if $\lambda^\bullet$ satisfies the column condition.
\end{thm}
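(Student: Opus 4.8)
\emph{Plan of proof.} By Theorem~\ref{thm:ConjReduction} it would suffice to know GW $\equiv$ CB on $\M_{0,4}$; since that theorem is phrased for all data, the plan is instead to run its reduction while staying inside the column condition. So there are two parts: (a) show that when $\lambda^\bullet$ on $\M_{0,n}$ satisfies the column condition and both divisor classes are restricted to an $F$-curve, the $4$-point data produced by the factorization property underlying Theorem~\ref{thm:ConjReduction} again satisfies the column condition; and (b) prove GW $\equiv$ CB on $\M_{0,4}$ for $4$-tuples satisfying the column condition. Part (a) is essentially a statement about the level-$l$ fusion rules for $\sL_{r+1}$, equivalently the quantum Schubert calculus of $\Gr_{r,r+l}$: the number of rows $\#\mu$ is subadditive under fusion up to the ``quantum'' corrections, which change it only by multiples of $r+1$, so that the global bound $\sum_i\#\lambda^i\le 2(r+1)$ is inherited by the weights attached to the nodes of the $F$-curve. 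The slack in the column condition (against the a priori bound $\sum_i\#\lambda^i\le nr$) should be exactly what makes this survive.

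For part (b) both sides are integers, since $\M_{0,4}\cong\bP^1$, and the plan is to compute each and compare. On the Gromov--Witten side, reinterpret $\deg_{\M_{0,4}} I^{1,\Gr_{r,r+l}}_{1,\lambda^\bullet}$ in the spirit of the ``quantum $=$ classical'' principle: a degree-$1$ stable map to $\Gr_{r,r+l}$ sweeps out a line, lines in $\Gr_{r,r+l}$ are parametrized by the two-step flag variety $\Fl(r-1,r+1;r+l)$, and pushing the four Schubert conditions forward along the incidence correspondence $\{A\subset V\subset B\}\to\{A\subset B\}$, $(A,V,B)\mapsto V$, identifies this degree with a classical intersection number
\[
\int_{\Fl(r-1,r+1;r+l)}\tau_{\lambda^1}\,\tau_{\lambda^2}\,\tau_{\lambda^3}\,\tau_{\lambda^4},
\]
where $\tau_\lambda$ is the pushforward of the Schubert class $\sigma_\lambda$ along that correspondence. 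Breaking this intersection two conditions at a time --- by associativity in $\H^*(\Fl(r-1,r+1;r+l))$, or by directly degenerating the configuration of Schubert varieties --- writes it as a sum over intermediate classes of products of two triple intersection numbers, each a genus-$0$, $3$-point Gromov--Witten invariant of $\Gr_{r,r+l}$ of degree $0$ or $1$; by Witten's formula these are ranks of $3$-pointed conformal blocks for $\sL_{r+1}$ at level $l$.

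On the conformal blocks side, use Fakhruddin's formula for $c_1(\mathbb{V}(\sL_{r+1},\lambda^\bullet,l))$ \cite{fakhr}: on $\M_{0,4}$ it expresses the degree in terms of the conformal weights of the $\lambda^i$ and, at each of the three boundary points, a sum over level-$l$ weights $\mu$ of the conformal weight of $\mu$ times a product of ranks of $3$-pointed conformal blocks. Comparing the two expressions then reduces to the conformal block $=$ Gromov--Witten identity for $3$ points just used, together with the claim that the column condition forces the intermediate weights on the two sides --- and their sizes --- to match, so that the sums agree term by term. For $l=1$ the column condition holds automatically, and this recovers \cite[Theorem~3.1]{BG}.

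The step I expect to be hardest is exactly this term-by-term comparison in (b). It requires reconciling three combinatorial inputs --- the pushforward classes $\tau_\lambda$ and the Littlewood--Richardson rule on $\Fl(r-1,r+1;r+l)$; the level-$l$ fusion rules, equivalently the quantum Littlewood--Richardson rule for $\Gr_{r,r+l}$; and the column condition as a numerical constraint --- and then showing that the column condition removes precisely the intermediate terms that would otherwise appear on only one side (contributions of weights whose size is lowered by a quantum correction in the fusion, or of Schubert classes of the flag variety not of the form $\tau_\mu$). Making part (a) fully precise, by tracking which nodal weights actually contribute with nonzero multiplicity, is a secondary and more bookkeeping-intensive obstacle.
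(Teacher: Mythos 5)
Your reduction step (a) is in the right spirit and matches Proposition~\ref{prop:ReductionColumnId}, though the mechanism you cite is not the one needed: in the factorization formulas \eqref{GWE} and \eqref{CLEP} the coefficients attached to the nodes are at or above the critical level, so by Witten's Dictionary they are \emph{classical} Littlewood--Richardson numbers, and the bound $\#\mu^j\le\sum_{i\in N_j}\#\lambda^i$ follows from classical subadditivity of the number of rows; no claim about quantum corrections changing $\#$ by multiples of $r+1$ is needed (nor is that claim correct as stated, since rim-hook removal can change the number of rows by other amounts).

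The genuine gap is in part (b), which is where the entire content of the theorem lies. Your plan is to expand $\int_{\Fl_{r-1,r+1;r+l}}\prod_i\sigma^{(1)}_{\lambda^i}$ ``two conditions at a time'' into triple products and to identify each factor with a $3$-point Gromov--Witten invariant of $\Gr_{r,r+l}$, then to match this term by term against Fakhruddin's degree formula for $c_1(\mathbb{V}(\sL_{r+1},\lambda^\bullet,l))$ on $\M_{0,4}$. But the intermediate classes produced by Poincar\'e duality on the two-step flag variety are arbitrary Schubert classes $\sigma_{w}$ with descents at $r-1$ and $r+1$, almost none of which are of the form $\sigma^{(1)}_\mu$, so the resulting triple products are \emph{not} $3$-point invariants of the Grassmannian and Witten's formula does not apply to them; moreover Fakhruddin's formula has a different shape (conformal weights times products of ranks), so ``the sums agree term by term'' is precisely the statement to be proved, and the column condition by itself gives no mechanism for killing the extra flag-variety terms or aligning the two sums. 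The paper's proof of Proposition~\ref{pro:conjspecialcase} avoids this entirely by proving two factorization identities: on the GW side, Proposition~\ref{pro:comparison} (using the basis $s_{\alpha,\beta}$ of \cite{gukalashnikov} together with \cite{PurbhooSottile}) shows that under the column condition the four-fold intersection on $\Fl_{r-1,r+1;r+l}$ splits as a product of classical intersections on $\Gr_{r-1,r+1}$ and $\Gr_{r+1,r+l}$ (Proposition~\ref{pro:identity}); on the CB side, Proposition~\ref{V:identity} factors $c_1(\mathbb{V}(\sL_{r+1},\lambda^\bullet,l))$ as $c_1(\mathbb{V}(\sL_{r+1},\overline\alpha^\bullet,1))\cdot{\rm Rk}\,\mathbb{V}(\sL_{r+1},\beta^\bullet,l-1)$, using the rank identity of Lemma~\ref{newwitten} (rim-hook removal), the Horn-boundary factorization of Lemma~\ref{lem:LR}, the additive identity of \cite[Proposition~19]{BGMB}, and above-critical-level vanishing \cite[Prop~1.3]{BGMA}; the known $l=1$ case of \cite{BG} then identifies the two column factors. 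Your proposal contains none of these ingredients, and without a substitute for them the asserted matching of intermediate terms remains unproved.
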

We reduce Theorem~\ref{GWCB} to the $n=4$ case in Proposition~\ref{prop:ReductionColumnId}, and then establish the $n=4$ case in Proposition~\ref{pro:conjspecialcase}.  
If $\sum_{i=1}^n \#\lambda^i < 2(r+1)$, both the GW and CB classes are trivial.  In \S\ref{KnownCases} we give an infinite family of nontrivial examples satisfying Theorem~\ref{GWCB}. 
In addition, with ConfBlocks, a package for Macaulay2,
we check the GW $\equiv$ CB Conjecture holds for small values of $r$ and $l$ by verifying it on $\M_{0,4}$ (Proposition~\ref{M2}).

Both critical level CB-bundles and GW-divisors satisfy symmetries: By \cite[Prop. 1.6]{BGMA}, $c_1(\mathbb{V}(\sL_{r+1}, \lambda^{\bullet}, l))\equiv c_1(\mathbb{V}(\sL_{l+1}, (\lambda^T)^{\bullet}, r))$; similarly, $I^{1,\Gr_{r,r+l}}_{1,\lambda^{\bullet}}\equiv I^{1,\Gr_{l,r+l}}_{1,(\lambda^T)^{\bullet}}$, from isomorphisms $\Gr_{r,r+l}\cong \Gr_{l,r+l}$.  Thus, for triples $(\lambda^{\bullet}, r,l)$ for which the  GW $\equiv$ CB Conjecture holds,
\begin{equation}\label{BigID}I^{1, \Gr_{r, r+l}}_{1, \lambda^\bullet} \equiv c_1(\mathbb{V}(\sL_{r+1},\lambda^{\bullet},l)) \equiv  I^{1, \Gr_{l, r+l}}_{1, (\lambda^T)^\bullet} \equiv c_1(\mathbb{V}(\sL_{l+1},(\lambda^T)^{\bullet},r)).\end{equation}
Therefore, Theorem~\ref{GWCB} also proves the conjecture when an analogous row condition is satisfied.

 To show that $I^{1,\Gr_{r,r+l}}_{1, \lambda^{\bullet}}$ and $c_1(\mathbb{V}(\sL_{r+1},\lambda^{\bullet},l))$ are numerically equivalent, it suffices to show they intersect all $F$-curves, which span $H_{2}(\ovop{M}_{0,n})$, in the same degree.   Formulas for these intersections have the same shape (see \eqref{GWE} and \eqref{CLEP}).  A comparison of their constituent parts gives the reduction of the GW $\equiv$ CB Conjecture to $\M_{0,4}$.
 This comparison relies on Witten's Dictionary (\S\ref{WD}), which gives the rank of a CB-bundle in terms of a computation in the cohomology ring of a Grassmannian.
 
 Using Proposition~\ref{prop:lines}, we provide an alternative  characterization of the GW $\equiv$ CB Conjecture for $n=4$, reinterpreting such classes as intersection numbers on two-step flag varieties.  We show using Proposition~\ref{Geometric} that for partitions satisfying the column condition, the GW-class on $\M_{0,4}$ can be identified with an intersection of Schubert classes on a two-step flag variety, and with this, prove Proposition~\ref{pro:identity}, the key identity on the GW-side of the story.  
    
 Proposition~\ref{V:identity} is the identity on the other side of the story, giving a relation for first Chern classes of critical level CB-bundles analogous to Proposition~\ref{pro:identity}.  The proof depends on rank conditions, which we check with Witten's Dictionary, quantum cohomology, and Schubert calculus. 

As we show in Proposition~\ref{nonvanishing}, our proof of Theorem~\ref{GWCB} gives sufficient criteria for the non-vanishing of GW and CB divisors. Proposition~\ref{nonvanishing} partially answers the question of 
finding necessary and sufficient conditions for non-vanishing of CB divisors, asked in \cite{BGMB}. If such globally generated divisors were numerically equivalent to zero, then their associated maps would be constant.  In particular, establishing that the  divisors are nonzero is the first step to finding  potentially interesting morphisms.

One reason for interest in identifications of classes (such as the GW-classes) that arise as geometric loci, with characteristic classes of globally generated vector bundles (such as the critical level CB-bundles) is that we can hope to gain some information about the morphisms they determine. We know in case $l=1$ or $r=1$ that such morphisms have images with modular interpretations as (weighted) points supported on Veronese curves \cite{Giansiracusa}, 
\cite{GiansiracusaSimpson}, \cite{gg}, \cite{gjms}. Moreover, identities like that predicted by the GW $\equiv$ CB Conjecture constrain the number of potentially independent 
extremal rays of the cone of nef divisors, giving evidence that it may be polyhedral, as predicted \cite{gkm}, in spite of the large numbers of nef divisors given by GW-divisors and first Chern classes of vector bundles of coinvariants. Both constructions give rise to basepoint free cycles of arbitrary codimension, and in  \cite{ICERM2}, which is ongoing, we are
considering the problem of their extremality  in cones of positive cycles.

The GW-classes we work with here are an example of a more general class of basepoint free Gromov-Witten loci $I^{c,X}_{d,\alpha^\bullet}$ of codimension c in $\ovop{M}_{0,n}$, defined in \cite{BG} from a homogeneous variety $X=G/P$ and a collection of Schubert subvarieties of $X$ satisfying particular numerical conditions. 

We study Chern classes of vector bundles that are special
cases of sheaves  $\mathbb{V}(\mathfrak{g},\{\mathcal{W}^i\},l)$, constructed  from simple modules $\mathcal{W}^i$ over a simple Lie algebra $\mathfrak{g}$. Fibers are vector spaces of coinvariants, and their duals are vector spaces  of conformal blocks.  The bundles satisfy factorization,  a property originally detected by Tsuchiya and Kanie  \cite{TK} in the case conformal blocks were defined on $\mathbb{P}^1$ by  $\sL_2$-modules.  Tsuchiya, Ueno, and Yamada constructed the sheaves on a space parametrizing stable pointed curves with coordinates, showing they satisfy factorization, and are vector bundles \cite{tuy}.  Tsuchimoto in \cite{ts} proved they are coordinate free and descend to $\ovop{M}_{g,n}$.  These are referred to in the literature as Verlinde bundles, vector bundles of coinvariants, vector bundles of covacua, and vector bundles of conformal blocks.   A notable feature is that (duals of) their fibers, vector spaces of conformal blocks, are canonically isomorphic to generalized theta functions \cites{bl1, KNR, Faltings, LaszloSorger}.  Fakhruddin, in \cite{fakhr}, extended an argument of \cite{tuy} for smooth, pointed curves of genus zero with coordinates, to show they are globally generated on $\ovop{M}_{0,n}$.  Their Chern classes have subsequently been studied, including in \cites{ gg, Giansiracusa, gjms,  ags, MOP, BGMA, BGMB, BGK, BG, MOPPZ}.

\section{Background and Notation}

\subsection{Schubert calculus}\label{sec:LR}
For positive integers $r$ and $l$, let $\Gr_{r,r+l}$ denote the Grassmannian of \(r\)-planes in \(\C^{r+l}\). This is a smooth projective homogeneous variety of dimension $rl$. Schubert varieties \(X_\lambda\) are certain special subvarieties of $\Gr_{r,r+l}$  indexed by partitions in the $r\times l$ rectangle $(l^r) = (l, \ldots, l)$. Each such partition is a weakly decreasing sequence of at most \(r\) integers between \(0\) and \(l\), and we identify partitions that differ by a number of trailing \(0\)'s. A partition can be represented as a Young diagram with $\lambda_i$ boxes in the $i$th row, where the rows are labelled from top to bottom. We use sequence notations and Young diagrams interchangeably. \(X_\lambda\) has codimension \(|\lambda|\coloneqq\sum \lambda_i\). Each Schubert variety \(X_\lambda\) determines a cohomology class \(\sigma_\lambda\in\H^{2|\lambda|}\Gr_{r,r+l}\). These classes form a \(\Z\)-basis for the cohomology ring $\H^*\Gr_{r,r+l}$. The complement of the Young diagram of \(\lambda\), read from bottom to top, gives the {\it dual partition} \(\lambda^\vee\).

Schur polynomials \(\{s_\lambda\}\) form a \(\Z\)-basis for the ring of symmetric functions \(\Lambda\).  We write
\begin{equation}\label{eq:LR}
 s_{\lambda^1}\cdot s_{\lambda^2}\cdots s_{\lambda^n}= \sum_\nu c_{\lambda^\bullet}^\nu s_\nu,
 \end{equation} where $c_{\lambda^\bullet}^\nu$ are the \emph{generalized Littlewood-Richardson coefficients}, and we note that  $c_{\lambda^\bullet}^\nu=0$ unless $\sum_i |\lambda^i|=|\nu|$. When $n=2$, this gives the usual
Littlewood-Richardson coefficients $c_{\lambda^1,\lambda^2}^\nu$. 

There is a surjective ring homomorphism $\Lambda\rightarrow \H^*\Gr_{r,r+l}$
defined by 
\[
s_\lambda \mapsto \left\{ \begin{array}{cl} \sigma_\lambda & \text{ if } \lambda\subseteq (l^r) \\ 0 &  \text{ if } \lambda\not\subseteq (l^r) \end{array}. \right .
\]In particular, given a collection of partitions  $\lambda^\bullet=(\lambda^1,\dots,\lambda^n)$, each  contained in an $r\times l$ rectangle $(l^r)$, the product of Schubert classes $\sigma_{\lambda^i}\in \H^{2|\lambda^i|}\Gr_{r,r+l}$  is given by 
\[\sigma_{\lambda^1}\cdot\sigma_{\lambda^2}\cdots\sigma_{\lambda^n} = \sum_\nu c_{\lambda^\bullet}^\nu \sigma_\nu,\]
where we sum over $\nu$ such that $\sum_i |\lambda^i|=|\nu|$ and $\nu\subseteq (l^r)$, and $c_{\lambda^\bullet}^\nu$ are the generalized Littlewood-Richardson coefficients in \eqref{eq:LR}. Observe also that for $\nu\subseteq (l^r)$, we have $c_{\lambda^\bullet}^\nu =\int_{\Gr_{r, r+l}}\prod_{i=1}^{n}\sigma_{\lambda^i}\cdot\sigma_{\nu^\vee}$.

In \S\ref{a:LR}, we state and prove some  facts about Littlewood-Richardson coefficients that we will use in the proofs of our main results. For example, we show in Lemma~\ref{lem:LR} a useful factorization identity that is a special case of such identities for Littlewood-Richardson coefficients on the bounday of the cone given by Horn inequalities.

\subsection{GW-classes and GW-invariants on \texorpdfstring{$\M_{0,n}$}{M 0,n bar}}\label{sec:GWclasses}

Let $\M_{0,n}(\Gr_{r,r+l},d)$ denote the Kontsevich moduli space of genus zero degree $d$ stable maps to $\Gr_{r,r+l}$.   This parametrizes data $(f,C,p_1,\dots,p_n)$, where $C$ is a connected nodal curve of genus $0$, and $f:C\to \Gr_{r,r+l}$ is a map such that $f_*[C]=d$ in $\H_2\Gr_{r,r+l}$. This space of stable maps is an irreducible projective variety of dimension $n-3+(r+l)d+rl$ that comes with $n$ evaluation maps 
$ev_i: \M_{0,n}(\Gr_{r,r+l},d) \longrightarrow \Gr_{r,r+l},$ given by sending $(f,C,p_1,\dots,p_n)$ to $f(p_i)$. 
Given a collection of partitions $\lambda^\bullet=(\lambda^1,\dots,\lambda^n)$, each contained in an $r\times l$ rectangle, consider the collection of Schubert varieties $X_{\lambda^i}$ and Schubert classes $\sigma_{\lambda^i}\in \H^{2|\lambda^i|}(X)$. Under the (flat) map $\eta:\M_{0,n}(\Gr_{r,r+l},d)\to  \M_{0,n}$ that sends $(f,C,p_1,\dots,p_n)$ to $(C,p_1,\dots,p_n)$,
since  $\dim \M_{0,n} =n-3$, we have
\[c\coloneqq\codim \eta_*(\cap_{i\in [n]}ev_{i}^*\sigma_{\lambda^i})=\sum_{i\in [n]}|\lambda^i|-(r+l)d-rl.\]

We define the \emph{GW-class} of  codimension $c$ on  $\M_{0,n}$ as
 \begin{equation}\label{GWLocus} 
I^{c,X}_{d,\lambda^{\bullet}}\coloneqq\eta_*(\cap_{i\in [n]}ev_{i}^*\sigma_{\lambda^i}).
  \end{equation}
This is a base point free cycle on $\M_{0,n}$ \cite{BG}. These classes are called \emph{GW-divisors} when they are of codimension $c=1$. In particular, when  $d=1$ and the collection $\lambda^\bullet$ satisfies:  \begin{equation}\label{CLequation}\sum_{i\in [n]}|\lambda^i|=(r+l) + rl+1=(r+1)(l+1),\end{equation}
we obtain GW-divisors   $I^{1,\Gr_{r,r+l}}_{1,\lambda^{\bullet}}$  on $\M_{0,n}$. The condition in \eqref{CLequation} is called the  \emph{critical level condition}.

Using the identification of the bottom and top cohomology groups with $\mathbb{Z}$, when $d=0$ and $c=0$, we obtain generalized  Littlewood-Richardson coefficients of \S\ref{sec:LR}:
\begin{equation}\label{eq:classical}
I^{0,\Gr_{r,r+l}}_{0,\lambda^{\bullet}}  = c_{\lambda^\bullet}^{(l^r)} =\int_{\Gr_{r,r+l}} \prod \sigma_{\lambda^i}.
\end{equation}
Similarly when $\sum_{i=1}^n|\lambda^i|=(r+l)d + rl+n-3$, the GW-classes $I^{n-3,\Gr_{r,r+l}}_{1,\lambda^\bullet}$ of codimension $n-3$ on $\M_{0,n}$ are  the \emph{$n$-pointed Gromov-Witten invariants} 
\begin{equation}\label{eq:GWinvariant}I_d(\sigma_{\lambda^1},\dots,\sigma_{\lambda^n}) = I^{n-3,\Gr_{r,r+l}}_{d, \lambda^{\bullet}}.
\end{equation} 

\subsection{Quantum cohomology of the Grassmannian}\label{sec:QH}
The (small) quantum cohomology ring of the Grassmannian $\Gr_{r,r+l}$ is defined as module over $\mathbb{Z}[q]$ by  $\QH^*\Gr_{r,r+l}\coloneqq \H^*\Gr_{r,r+l}\otimes_{\mathbb{Z}} \mathbb{Z}[q]$.  There is a $\mathbb{Z}[q]$-basis of Schubert classes $\sigma_\lambda\otimes 1$, which we also denote by $\sigma_\lambda$ in an abuse of notation.  There is a quantum product that defines an associative ring structure on the graded ring $\QH^*\Gr_{r,r+l}$, where $\sigma_\lambda$ has degree $|\lambda|$ and $q$ has degree $r+l$ \cite{bertram}. The quantum product is defined by:
\[\sigma_{\lambda^1}*\sigma_{\lambda^2} = \sum_{\nu,d} c_{\lambda^1,\lambda^2}^{d,\nu} q^d\sigma_\nu,\]
where $c_{\lambda^1,\lambda^2}^{d,\nu}$ is the 3-pointed Gromov-Witten invariant $I_d(\sigma_{\lambda^1},\sigma_{\lambda^2},\sigma_{\nu^\vee})$, where \(\nu^\vee\) is the partition dual to \(\nu\) defined in \S\ref{sec:LR}.

Since the $\sigma_\lambda$ form a basis for $\QH^*\Gr_{r,r+l}$ as a $\mathbb{Z}[q]$-module, we can write
\begin{equation}\label{eq:quantum-product} 
\sigma_{\lambda^1}*\dots*\sigma_{\lambda^n} = \sum_{\nu,d} c_{\lambda^\bullet}^{d,\nu} q^d\sigma_\nu. \end{equation}
We call these structure coefficients $c_{\lambda^\bullet}^{d,\nu}$ the  degree $d$ \emph{quantum  Littlewood-Richardson coefficients}. Note that $c_{\lambda^\bullet}^{d,\nu}=0$ unless $\sum |\lambda^i| = |\nu|+(r+l)d$. Note also that the quantum  Littlewood-Richardson coefficients $c_{\lambda^\bullet}^{d,\nu}$ are in general not Gromov-Witten invariants themselves,  though they are determined by the 3-pointed Gromov-Witten invariants.

By the Main Lemma of \cite{ciocanpaper}, quantum products can be obtained by first computing classical products and then removing rim-hooks. We state the Main Lemma here for the convenience of the reader. We first define classes $\sigma_\lambda$ for all partitions $\lambda$, not just those fitting into an $(l^r)$ rectangle: for any nonempty partition $\lambda=(\lambda_1,\dots,\lambda_s)$, let
\[ \sigma_\lambda = \det(\sigma_{\lambda_i+j-i})_{1\leq i,j\leq s} \in \QH^{2|\lambda|}\Gr_{r,r+l};\] here \(\sigma_p=0\) for \(p<0\) and \(\sigma_p=\sigma_{(p)}\) for \(p\geq 0\).
When $\lambda$ fits into an  $(l^r)$ rectangle, this gives the (quantum) Schubert class $\sigma_\lambda$ \cite{bertram}.

An $m$-rim-hook of a partition is defined to be a collection of $m$ boxes in a partition, which start at the bottom of a column and move right and up along the rim. An $m$-rim-hook is \emph{illegal} if once removed, what remains is not a partition. The width $w$ of an $m$-rim-hook is the number of columns it occupies. 
\begin{figure}[h!]
\centering
\begin{tikzpicture}[scale=0.3]
\draw (0, 0) -- (7, 0);
\draw (0, -1) -- (7, -1);
\draw (7,0) -- (7, -1);
\draw (6,0) -- (6, -1);
\draw (5,0) -- (5, -1);
\draw (0, -2) -- (4, -2);
\draw (0, 0) -- (0, -5);
\draw[fill = blue!50]  (0, -6) rectangle (2, -5); 
\draw[fill = blue!50] (1, -5) rectangle (2, -3);
\draw [fill=blue!50] (1, -2) rectangle (4, -3);
\draw (1, 0) -- (1, -2);
\draw (0, -3) -- (1, -3);
\draw (0, -4) -- (2, -4);
\draw (1, -5) -- (1, -6);
\draw (2, 0) -- (2, -3);
\draw (3, 0) -- (3, -3);
\draw (4, 0) -- (4, -2);
\end{tikzpicture}
\caption{A $7$-rim-hook of width $4$.}
\end{figure}
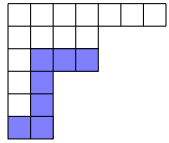

\begin{lemma}[Main Lemma, \cite{ciocanpaper}] \label{mainlemma}
Let $\lambda$ be a partition. The following is true in \(\QH^*\Gr_{k,m}\): If $\lambda$ contains an illegal $m$-rim-hook, or if $\lambda_{k+1}>0$ and $\lambda$ does not contain an $m$-rim-hook, then $\sigma_\lambda=0$.  If $\mu$ is the result of removing an $m$-rim-hook of width $w$ from $\lambda,$ then 
$\sigma_\mu=(-1)^{w+m-k} q\sigma_\lambda.$
\end{lemma}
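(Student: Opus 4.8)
The plan is to reduce the statement to a computation in a generators-and-relations presentation of $\QH^*\Gr_{k,m}$. By Bertram's quantum Schubert calculus \cite{bertram} (and the presentation of Siebert--Tian), there is a surjective graded $\Z[q]$-algebra map from the symmetric functions $\Z[q]\otimes\Lambda$ onto $\QH^*\Gr_{k,m}$ sending the complete homogeneous symmetric function $h_j$ to the special Schubert class $\sigma_{(j)}$ for $1\le j\le l:=m-k$, to $0$ for $l<j<m$, and to $\pm q$ for $j=m$; by quantum Giambelli \cite{bertram} it carries the Schur function $s_\lambda=\det(h_{\lambda_i+j-i})$ to the determinant $\det(\sigma_{\lambda_i+j-i})$ that defines $\sigma_\lambda$. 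Thus Lemma~\ref{mainlemma} becomes a straightening rule for $s_\lambda$ in this quotient.

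The structural fact I would extract is a periodicity: combining Newton's identities with the above relations (and their counterparts $e_i=0$ for $k<i<m$, $e_m=\pm q$) gives $h_{p+m}=\pm q\,h_p$ for all $p\ge0$, so each $h_p$ reduces to $q^{\lfloor p/m\rfloor}$ times $\sigma_{(p\bmod m)}$, which vanishes unless $p\bmod m\le l$. Computing $\det(\sigma_{\lambda_i+j-i})$ is then exactly the combinatorics of $m$-cores and $m$-quotients in the James--Kerber style: pass to the first-column hook lengths $\beta_i=\lambda_i+(s-i)$ and record their residues mod $m$ on an $m$-runner abacus, so that removing an $m$-rim-hook becomes sliding one bead up one runner, legal precisely when the target position is vacant. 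I would prove the one-step identity $\sigma_\lambda=(-1)^{w+m-k}q\,\sigma_\mu$ by carrying out a single bead move inside the determinant: the factor $q$ comes from $h_{p+m}=\pm q\,h_p$, the leftover minor is again a Jacobi--Trudi determinant equal to $\sigma_\mu$, and the sign is the product of the sign in that relation with the permutation sign picked up when re-sorting the $\beta$'s --- the width $w$ entering because the rim-hook spans $w$ columns, equivalently the bead passes $w-1$ others. If only an illegal $m$-rim-hook is available, the bead move collides, forcing a repeated $\beta$ and hence $\sigma_\lambda=0$. Together with the classical fact that $s_\lambda\mapsto\sigma_\lambda$ for $\lambda\subseteq(l^k)$ and $s_\lambda\mapsto0$ otherwise (the $q=0$ content) and an induction on $|\lambda|$, this yields the lemma.

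The main obstacle is the sign: fitting the re-sorting permutation sign, the signs from the periodicity relation, and the chosen convention for the width together into the single exponent $(-1)^{w+m-k}$. Sign bookkeeping in quantum Schubert calculus is notoriously delicate --- even the direction of the identity $\sigma_\lambda\leftrightarrow q\,\sigma_\mu$ needs care --- so I would pin down every convention on the base cases $\Gr_{1,m}\cong\mathbb{P}^{m-1}$, where $\sigma_{(1)}^{\,m}=q$, and $\Gr_{2,4}$, before running the general induction. As a cross-check, or as an alternative route, one can instead compute the three-point Gromov--Witten invariants directly, via Bertram's quantum Pieri and Giambelli formulas \cite{bertram} or the ``quantum $=$ classical'' reduction to a two-step flag variety; but recovering the clean rim-hook statement from those still comes down to the same $\beta$-number straightening.
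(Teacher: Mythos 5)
First, a point of comparison: the paper does not prove Lemma~\ref{mainlemma} at all --- it is quoted from \cite{ciocanpaper} ``for the convenience of the reader'' --- so the only proof to measure your sketch against is the original one of Bertram--Ciocan-Fontanine--Fulton, and your outline (Siebert--Tian presentation, quantum Giambelli, the periodicity $h_{p+m}=(-1)^{k-1}q\,h_p$, $\beta$-numbers on an $m$-runner abacus, sign from re-sorting rows) is essentially that argument. So the strategy is the right one and coincides with the source rather than offering a new route.

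As written, though, there are two genuine gaps. (i) The single ``bead move inside the determinant'' is not valid as stated: the relation $h_{p+m}=(-1)^{k-1}q\,h_p$ holds in the quotient only for $p\ge -(k-1)$, and it fails exactly when $1\le p+m\le m-k$, where $h_{p+m}$ maps to the nonzero class $\sigma_{(p+m)}$ while $h_{p}$ maps to $0$. The row of the Jacobi--Trudi matrix you want to rescale contains entries with such indices whenever $\lambda$ has many rows --- precisely the $\lambda_{k+1}>0$ branch of the lemma, and precisely the case this paper actually uses (the partition $\gamma'=(l^{r+2},1^{r+1})$ in the proof of Lemma~\ref{lem:quantum-classical}). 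So one cannot simply factor $\pm q$ out of a row and re-sort; the legality hypothesis and an induction or column expansion of the determinant (as in \cite{ciocanpaper}) are needed to control those entries, and your sketch is silent on this. (ii) The sign you defer is the crux, not bookkeeping. On degree grounds the identity must read $\sigma_\lambda=\varepsilon\, q\,\sigma_\mu$ with $|\lambda|=|\mu|+m$ (note the statement as printed has $\lambda$ and $\mu$ in the opposite order), and small checks in $\Gr_{2,4}$ --- e.g.\ $\sigma_1*\sigma_{2,1}=\sigma_{2,2}+q$ forces $h_4\mapsto -q$ and $s_{(3,1)}\mapsto +q$ --- show that an exponent expressed through the width $w$ alone shifts by exactly one when the roles of rows and columns (i.e.\ of $k$ and $m-k$) are interchanged. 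So the calibration on $\mathbb{P}^{m-1}$ and $\Gr_{2,4}$ that you postpone is a step you must actually carry out before the statement you would prove can be matched, sign and direction included, with the one quoted here.
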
  
We use this formulation of computing classically and removing rim-hooks in the proof of Lemma~\ref{newwitten}, which is a critical ingredient for the proof of Theorem~\ref{GWCB}. 

\subsection{CB-Bundles in type A, ranks, critical level vanishing and identities}\label{WittenD}
Partitions $\lambda^\bullet$ for $\Gr_{r,r+l}$ parametrize simple $\sL_{r+1}$-modules, and collections of  partitions give rise to vector bundles $\mathbb{V}(\sL_{r+1}, \lambda^\bullet, l)$ constructed from representations of the simple Lie algebra $\sL_{r+1}$ at level $l$. These were originally defined by Tsuchiya, Ueno, and Yamada in \cite{tuy}.  For the bundle to be nontrivial, $(r+1)$ must divide the total sum $\sum_{i=1}^n|\lambda^i|$. On $\M_{0,n}$ such bundles are globally generated \cite{fakhr}.  The following result allows one to obtain their ranks via (quantum) cohomology of Grassmannians \cite{BHorn}.

 \begin{theorem}\label{WD}(Cohomological form of Witten's Dictionary)
Let $\lambda^\bullet$ be a collection of $n$ partitions contained in an $r\times l$ rectangle satisfying $\sum_{i=1}^n |\lambda^i|=(r+1)(l+s)$ for some $s\in \mathbb{Z}$. Then the rank $R$ of $\mathbb{V}(\sL_{r+1}, \lambda^\bullet, l)$ 
on $\M_{0,n}$ may be computed as follows:
\begin{enumerate}
\item If $s\le 0$, then $R$ 
is equal to  
\[\int_{\Gr_{r+1,r+1+l+s}} \sigma_{\lambda^1}\cdot\sigma_{\lambda^2}\cdot\dots\cdot\sigma_{\lambda^n} =c_{\lambda^\bullet}^{(l+s)^{r+1}},\]
where the second equality follows from \eqref{eq:classical}.
\item If $s\ge 0$, then  $R$ is equal to $c^{s,(l^{r+1})}_{\lambda^\bullet,(l)^s}$. As in \eqref{eq:GWinvariant}, this is
the coefficient of $q^s \sigma_{(l^{r+1})}$ in the quantum product
\[\sigma_{\lambda^1} * \sigma_{\lambda^2} * \cdots * \sigma_{\lambda^n} * \sigma_{(l)}^s \text{ in } \QH^*\Gr_{r+1,r+1+l}.
\]
Here, $\sigma_{(l^{r+1})}$ is equal to the point class $[pt]$. \end{enumerate}
\end{theorem}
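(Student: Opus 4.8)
The plan is to express $R$ as a structure constant of the level-$l$ fusion (Verlinde) algebra of $\sL_{r+1}$, and then to feed it into Witten's dictionary — in the rigorous form of \cite{BHorn} — relating that algebra to the (quantum) cohomology of $\Gr_{r+1,r+1+l}$; the two clauses then correspond to separating the classical range $s\le 0$ from the quantum range $s\ge 0$. First I would use that the rank of $\mathbb{V}(\sL_{r+1},\lambda^\bullet,l)$ is locally constant on $\M_{0,n}$, evaluate it at a maximally degenerate stable curve, and apply propagation of vacua together with the factorization rules of \cite{tuy}. This presents $R$ as a sum over admissible labelings of the internal edges of a trivalent tree of products of three-point ranks, i.e.\ identifies $R$ with the multiplicity of the trivial module in the $n$-fold fusion product $[V_{\lambda^1}]\odot\cdots\odot[V_{\lambda^n}]$ at level $l$.

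For clause (1), where $s\le 0$: here $\sum_i|\lambda^i|=(r+1)(l+s)\le (r+1)l$, a range in which the level-$l$ fusion multiplicity is known to stabilize to the classical $\sL_{r+1}$-tensor multiplicity $[\,V_{\lambda^1}\otimes\cdots\otimes V_{\lambda^n}:\mathbf{1}\,]$, so no level truncation is seen. Since $\sum_i|\lambda^i|=(r+1)(l+s)$, the only rectangular Young diagram that can serve as a target shape in that multiplicity is $((l+s)^{r+1})$, so it equals $c^{((l+s)^{r+1})}_{\lambda^\bullet}$, and hence $\int_{\Gr_{r+1,r+1+l+s}}\prod_i\sigma_{\lambda^i}$ by \eqref{eq:classical}; both expressions vanish automatically unless each $\lambda^i\subseteq((l+s)^{r+1})$.

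For clause (2), where $s\ge 0$: now $\sum_i|\lambda^i|\ge (r+1)l=\dim\Gr_{r+1,r+1+l}$ and genuine quantum corrections enter, so I would invoke the quantum form of Witten's dictionary from \cite{BHorn} (built on \cite{bertram} and the rim-hook description recalled in Lemma~\ref{mainlemma}): the fusion structure constant $R$ is a Gromov--Witten number of $\Gr_{r+1,r+1+l}$. To put it in the stated shape, I would use the simple-current (spectral-flow) symmetry of the Verlinde algebra: the weight $l\omega_1$ is an order-$(r+1)$ simple current corresponding to $\sigma_{(l)}$, and inserting $s$ copies of it — multiplication by $\sigma_{(l)}^{*s}$ on the Schubert side — shifts the vacuum to a definite weight whose multiplicity in $[V_{\lambda^1}]\odot\cdots\odot[V_{\lambda^n}]\odot (l\omega_1)^{\odot s}$ equals $R$ and is read off as the coefficient of $q^s\sigma_{(l^{r+1})}$ in $\sigma_{\lambda^1}*\cdots*\sigma_{\lambda^n}*\sigma_{(l)}^{*s}$ in $\QH^*\Gr_{r+1,r+1+l}$. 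The power of $q$ is forced to equal exactly $s$ by degree bookkeeping: $c^{d,\nu}_{\lambda^\bullet,(l)^s}=0$ unless $\sum_i|\lambda^i|+ls=|\nu|+(r+1+l)d$, and $\nu=(l^{r+1})=[pt]$ then gives $d=s$. The two clauses overlap consistently at $s=0$: there are no $\sigma_{(l)}$-factors, $d=0$, and both read $R=c^{(l^{r+1})}_{\lambda^\bullet}=\int_{\Gr_{r+1,r+1+l}}\prod_i\sigma_{\lambda^i}$.

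The hard part is clause (2): identifying a Verlinde number with a Gromov--Witten invariant of the Grassmannian is Witten's dictionary itself, which I would cite from \cite{BHorn} rather than reprove. Granting that, the only residual danger is an off-by-one in the bookkeeping — I must confirm that $\sigma_{(l)}$, and not a neighboring class, plays the role of the simple current, that precisely $s$ copies are inserted, and that $(l^{r+1})$ is the relevant point class at $q$-degree $s$. I would pin these down by checking the $r=1$ case against the explicit $\widehat{\sL}_2$ fusion rules and small $(r,l)$ against the rim-hook algorithm of Lemma~\ref{mainlemma} before asserting the general statement.
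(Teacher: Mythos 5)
The paper does not prove Theorem~\ref{WD}: it is quoted as the cohomological form of Witten's Dictionary and attributed to \cite{BHorn}, so there is no internal argument to compare against, and your proposal, which ultimately defers the substantive identification to \cite{BHorn} in both clauses, is in that sense the same approach as the paper's. One caution: the steps you treat as supplementary reasoning — that the level-$l$ fusion multiplicity agrees with the classical tensor multiplicity precisely in the range $s\le 0$, and that inserting $s$ copies of the simple current $l\omega_1$ (i.e.\ $\sigma_{(l)}$) converts the vacuum multiplicity into the coefficient of $q^s\sigma_{(l^{r+1})}$ — are themselves the content of the cited dictionary rather than independently standard facts at exactly these thresholds, so your write-up should present them as part of what is being cited, not as independent justification.
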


\subsection{Critical level vanishing and identities}\label{CL}
Suppose we are given a collection of $n$ partitions
 $\lambda^{\bullet}=(\lambda^1, \ldots, \lambda^n)$ for $\sL_{r+1}$, and as is standard, we suppose that $r+1$ divides the sum $\sum_{i=1}^n|\lambda^i|$ (see Remark~\ref{RankZero}). Following
  \cite[Def~1.1]{BGMA}, we define the critical level for the pair 
  $(\sL_{r+1},\lambda^{\bullet})$ to be 
  \[c(\sL_{r+1},\lambda^{\bullet})=-1+\frac{1}{r+1}\sum_{i=1}^n|\lambda^i|.\]
 We say that the bundle 
 $\mathbb{V}(\sL_{r+1},\lambda^{\bullet},l)$ is  
 \begin{enumerate}
 \item {\em{at the critical level}} when   $\sum_i|\lambda^i|=(r+1)(l+1)$, so $l=c(\sL_{r+1},\lambda^{\bullet})$
\item  {\em{above the critical level}} when $\sum_i|\lambda^i|=(r+1)(l+s)$ for $s\le 0$, so 
$l>c(\sL_{r+1},\lambda^{\bullet})$.
\end{enumerate}
A bundle $\mathbb{V}(\sL_{r+1}, \lambda^\bullet, l)$ that is at the critical level will be referred to as a critical level bundle.

By \cite[Prop~1.6]{BGMA}, critical level bundles satisfy  identities: 
\begin{equation}\label{CSym}c_1(\mathbb{V}(\sL_{r+1}, \lambda^\bullet, l))=c_1(\mathbb{V}(\sL_{l+1}, (\lambda^T)^\bullet, r)),\end{equation}
where $(\lambda^\bullet)^T$ denotes the collection of $n$ partitions each transpose to $\lambda^i$. 

 Furthermore, when $l>c(\sL_{r+1},\lambda^{\bullet})$, then by \cite[Prop~1.3]{BGMA}, 
 $c_1(\mathbb{V}(\sL_{r+1}, \lambda^\bullet, l))=0.$

\section{Reductions to the 4-pointed case}

In this section prove two reduction results. We first prove Theorem~\ref{thm:ConjReduction}, which reduces the GW $\equiv$ CB Conjecture to the $n=4$ case. With similar ideas, we prove Proposition~\ref{prop:ReductionColumnId}, which reduces Theorem~\ref{GWCB} to the $n=4$ case.    

\subsection{Proof of Theorem~\ref{thm:ConjReduction}}
Partitions $\lambda^{\bullet}=(\lambda^1, \ldots, \lambda^n)$ satisfying $\sum_i|\lambda^i|=(r+1)(l+1)$ determine both 
a critical level  CB bundle $\mathbb{V}(\sL_{r+1}, \lambda^{\bullet}, l)$  and a
GW divisor $I^{1,\Gr_{r,r+l}}_{1, \lambda^{\bullet}}$.  We  will show both divisors intersect all curves in the same degree. 

The $F$-curves, described in Definition~\ref{FCurves}, span the vector space of $1$-cycles, so it suffices to show that the intersections of the divisors with all $F$-curves are the same.    
An $F$-curve is indexed by a decomposition $\{1, \ldots, n\} = N_1 \cup \cdots \cup N_4$.
We write $\lambda(N_j)=\{\lambda^i : i \in N_j\}$. Recall that we write $(\mu^j)^\vee$ for the partition dual to $\mu^j$ given by taking the complement of   $\mu^j$  in a box of size $r\times l$ (pictured on the left of Figure~\ref{dualfig}).

 By \cite[Prop~2.2]{BG}, the degree of the intersection of an $F$-curve $F_{N_1,N_2,N_3,N_4}$ with the
GW divisor $I^{1,\Gr_{r,r+l}}_{1, \lambda^{\bullet}}$ is given by the formula
 \begin{equation}\label{GWE1}
 I^{1,\Gr_{r,r+l}}_{1, \lambda^{\bullet}}\cdot F_{N_1,N_2,N_3,N_4} 
 = \sum I^{1,\Gr_{r,r+l}}_{1-\sum_{j=1}^4d^j, \mu^{\bullet}} \prod_{j=1}^4I^{0,\Gr_{r,r+l}}_{d^j, \lambda(N_j)\cup (\mu^j)^\vee},\end{equation}
summing over 4-tuples of integers $d^{\bullet}=(d^1,\cdots,d^4)$ and $4$-tuples of partitions $\mu^{\bullet}=(\mu^1,\ldots,\mu^4)$ for $\Gr_{r,r+l}$. 
Note that we must have $1-\sum_{i}d^j\ge 0$, so $d_i\le 1$.  Furthermore, $I_{0, \mu^\bullet}^{1, \Gr_{r,r+l}}=0$. Hence, to have a nonzero summand, we may assume $d^j=0$ for all $j$.
Also, for $I^{0,\Gr_{r,r+l}}_{0, \lambda(N_j)\cup (\mu^j)^\vee}$ to be non-zero, we must have $ \sum_{i \in N_j}|\lambda^i|+|(\mu^j)^\vee| =rl$ or equivalently, $|\mu^j| = \sum_{i \in N_j} |\lambda^i|$.
Thus, the intersection of $F_{N_1,N_2,N_3,N_4}$ with $I^{1,\Gr_{r,r+l}}_{1, \lambda^{\bullet}}$
is given by 
\begin{equation}\label{GWE}
 I^{1,\Gr_{r,r+l}}_{1, \lambda^{\bullet}}\cdot F_{N_1,N_2,N_3,N_4} 
 = \sum_{\mu^{\bullet}} I^{1,\Gr_{r,r+l}}_{1, 
 \mu^{\bullet}} \prod_{j=1}^4I^{0,\Gr_{r,r+l}}_{0, \lambda(N_j)\cup (\mu^j)^\vee},\end{equation}
where our sum ranges over partitions $\mu^{\bullet}=\{\mu^j\}_{j=1}^4$ for
$\Gr_{r,r+l}$ satisfying 
\begin{equation}\label{GWData}
|\mu^j| = \sum_{i \in N_j}|\lambda^i| \text{ \ \  for $j = 1,2,3,4$}.
\end{equation}

The intersection of $F_{N_1,N_2,N_3,N_4}$  with $c_1(\mathbb{V}(\sL_{r+1},\lambda^{\bullet}, l))$ is given by the following formula (see Lemma~\ref{CLELemma}):
\begin{multline}\label{CLEP}  c_1(\mathbb{V}(\sL_{r+1},\lambda^{\bullet}, l)) \cdot F_{N_1,N_2,N_3,N_4} \\
 = \sum_{\nu^{\bullet}} c_1(\mathbb{V}(\sL_{r+1},\nu^{\bullet}, l)) \ 
  \prod_{1\le j \le 4}{\rm{Rk}}(\mathbb{V}(\sL_{r+1}, \lambda(N_j) \cup (\nu^j)^*, l)),\end{multline}
where one sums over $4$-tuples of partitions $\nu^{\bullet}=\{\nu^j\}_{j=1}^4$ of $\Gr_{r,r+l}$,  and   $(\nu^j)^*$ is the complement of $\nu^j$ in the rectangle of size $(r+1)\times \nu^j_1$. This is a slightly different notion of dual, pictured on the right of Figure~\ref{dualfig}.
\begin{figure}[h!]
\centering
\begin{tikzpicture}[scale=.3]
\draw[fill = blue!40] (0, -7) -- (0, -5) -- (3, -5) -- (3, -2) -- (4, -2) -- (4, -1) -- (5, -1) -- (5, 0) -- (8, 0) -- (8, -7) -- cycle;
\node[scale=.8] at (6.5, -4.5) {$\nu^\vee$};

\draw[white] (0, 0) rectangle (0, -8);
\draw[thick] (0, 0) rectangle (8, -7);
\draw (0, -5) -- (3, -5) -- (3, -2) -- (4, -2) -- (4, -1) -- (5, -1) -- (5, 0);
\node[scale=.8] at (1.5, -1.5) {$\nu$};

\node[scale=.8] at (4, 1) {$l$};
\draw [->] (3.5, 1) -- (0, 1);
\draw [->] (4.5, 1) -- (8, 1);
\node[scale=.8, rotate = 90] at (-1, -3.5) {$r$};
\draw [->] (-1, -4) -- (-1, -7);
\draw [<-] (-1, 0) -- (-1, -3);
\end{tikzpicture}
\hspace{1in}
\begin{tikzpicture}[scale=.3]
\draw[fill = red!40] (0, -8) -- (0, -5) -- (3, -5) -- (3, -2) -- (4, -2) -- (4, -1) -- (5, -1) -- (5, -8) -- cycle;
\node[scale=.8] at (4, -6) {$\nu^*$};

\draw[thick] (0, 0) rectangle (8, -7);
\draw (0, -5) -- (3, -5) -- (3, -2) -- (4, -2) -- (4, -1) -- (5, -1) -- (5, 0);
\node[scale=.8] at (1.5, -1.5) {$\nu$};
\draw[dashed, thick, red] (0, 0) rectangle (5, -8);

\node[scale=.8] at (4, 1) {$l$};
\draw [->] (3.5, 1) -- (0, 1);
\draw [->] (4.5, 1) -- (8, 1);
\node[scale=.8,rotate = 90] at (-1, -3.5-.5) {$r+1$};
\draw [->] (-1, -4.6-.5-.3) -- (-1, -8);
\draw [<-] (-1, 0) -- (-1, -2.4-.5+.3);
\end{tikzpicture}
\caption{Two notions of duals}
\label{dualfig}
\end{figure}
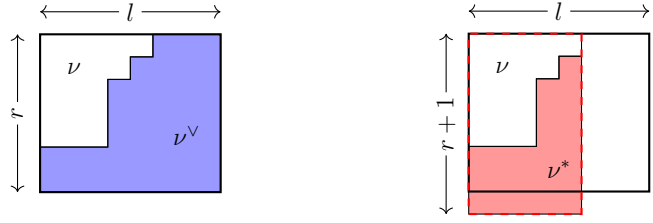

By Lemma~\ref{CLELemma}, the term for $\nu^\bullet$ in \eqref{CLEP} is zero unless
\begin{equation}\label{CLData}
|\nu^i|=\sum_{i\in N_j}|\lambda^i | \text{ \ \  for $j = 1,2,3,4$}.
\end{equation}
Thus, the nonzero terms of \eqref{GWE} and \eqref{CLEP} are both indexed by $4$-tuples of partitions satisfying \eqref{GWData} (equivalently \eqref{CLData}).

We will prove that \eqref{GWE} and  \eqref{CLEP} are equal if we show that
\begin{enumerate}
\item[(a)] $I^{1,\Gr_{r,r+l}}_{1, \mu^{\bullet}} 
=  \ c_1(\mathbb{V}(\sL_{r+1},\mu^{\bullet},l))$; and
\item[(b)]  $I^{0,\Gr_{r,r+l}}_{0,\lambda(N_j) \cup (\mu^j)^\vee}={\rm{Rk}}(\mathbb{V}(\sL_{r+1}, \lambda(N_j) \cup (\mu^j)^*, l))$ $\forall$ $1\le j\le 4$.
\end{enumerate}
By (the cohomological form of) Witten's Dictionary in \S\ref{WittenD},  since 
\[\sum_{i\in N_j}|\lambda^i| +|(\mu^j)^*|=(r+1)\mu^j_1=(r+1)(l+s), \ \mbox{ for } \  s \le 0,\]  
setting $\lambda(N_j) = \{\gamma^1,\ldots, \gamma^k\}$,  the rank of the vector bundle $\mathbb{V}(\sL_{r+1}, \lambda(N_j) \cup (\mu^j)^*, l)$  is 
 equal to the intersection number 
 \begin{equation} \label{rk}
 \sigma_{\gamma^1} \cdot \sigma_{\gamma^2}  \cdots  \sigma_{\gamma^k} \cdot \sigma_{(\mu^j)^*}\in \H^*\Gr_{r+1,r+1+l+s}.
 \end{equation}
Since $\mu^j$ has width $\mu_1^j=l+s$, $(\mu^j)^*$ is the complement of $\mu^j$ in an $(r+1) \times (l + s)$ rectangle. Therefore, the quantity in \eqref{rk} is equal to the classical generalized Littlewood-Richardson coefficient $c_{\gamma^\bullet}^{\mu^j}$, which can be computed in any Grassmannian where $\mu^j$ lies, in particular in $\Gr_{r,r+l}$. From \eqref{eq:classical}, the rank is therefore also equal to  $I^{0,\Gr_{r,r+l}}_{0,\lambda(N_j) \cup (\mu^j)^\vee}$, establishing (b) above. Part  (a) holds by the assumption of the statement of the theorem.

\begin{remark}
For fixed  $(r, l, \lambda^{\bullet})$, Theorem~\ref{thm:ConjReduction} reduces the GW $\equiv$ CB Conjecture to a finite computation.
For small $r$ and $l$, this is feasible with a computer and allows us to establish several new cases of the conjecture (see \S\ref{KnownCases}).
\end{remark}
\subsection{Reduction of Theorem~\ref{GWCB} to the case \texorpdfstring{$n=4$}{n=4}}
Theorem~\ref{GWCB} shows the GW $\equiv$ CB Conjecture holds for all partitions satisfying the column condition (see Definition~\ref{ColumnDef}). In the following, we show that it suffices to prove Theorem~\ref{GWCB} in the case $n=4$.

\begin{proposition}\label{prop:ReductionColumnId}
The GW $\equiv$ CB Conjecture holds for all $n$-tuples of partitions satisfying the column condition if the GW $\equiv$ CB Conjecture holds for all $4$-tuples of partitions satisfying the column condition.
\end{proposition}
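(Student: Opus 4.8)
The plan is to repeat the $F$-curve computation from the proof of Theorem~\ref{thm:ConjReduction} and to check that the column condition is inherited by all the $4$-tuples of partitions that contribute to it. So fix an $n$-tuple $\lambda^\bullet$ satisfying the column condition, i.e.\ $\sum_{i}|\lambda^i|=(r+1)(l+1)$ and $\sum_{i}\#\lambda^i\le 2(r+1)$. Since the $F$-curves span $H_2(\M_{0,n})$, it suffices to prove that $I^{1,\Gr_{r,r+l}}_{1,\lambda^\bullet}$ and $c_1(\mathbb{V}(\sL_{r+1},\lambda^\bullet,l))$ meet every $F$-curve $F_{N_1,N_2,N_3,N_4}$ in the same degree. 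By \eqref{GWE} and \eqref{CLEP}, these two intersection numbers are sums over the same index set of $4$-tuples $\mu^\bullet=(\mu^1,\dots,\mu^4)$ of partitions for $\Gr_{r,r+l}$ satisfying \eqref{GWData}, i.e.\ $|\mu^j|=\sum_{i\in N_j}|\lambda^i|$; moreover part (b) of the proof of Theorem~\ref{thm:ConjReduction} shows, unconditionally, that the ``rank'' factors of the two formulas coincide term by term, their common value being the classical Littlewood--Richardson coefficient $c^{\mu^j}_{\lambda(N_j)}$ (via \eqref{eq:classical}). Thus it remains only to compare the leading factors $I^{1,\Gr_{r,r+l}}_{1,\mu^\bullet}$ and $c_1(\mathbb{V}(\sL_{r+1},\mu^\bullet,l))$ for the $\mu^\bullet$ that actually contribute.

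The crux is that any such $\mu^\bullet$, viewed as a $4$-tuple, again satisfies the column condition. First, summing $|\mu^j|=\sum_{i\in N_j}|\lambda^i|$ over $j$ gives $\sum_{j=1}^4|\mu^j|=\sum_{i=1}^n|\lambda^i|=(r+1)(l+1)$, so $(\mu^\bullet,r,l)$ is at the critical level. Second, if the $\mu^\bullet$-term of \eqref{GWE} is nonzero, then $c^{\mu^j}_{\lambda(N_j)}\ne 0$ for each $j$, so the Schur function $s_{\mu^j}$ occurs in $\prod_{i\in N_j}s_{\lambda^i}$; by the standard fact that the number of parts of a Littlewood--Richardson product is at most the sum of the numbers of parts of its factors (a standard fact about Littlewood--Richardson coefficients; see \S\ref{a:LR}), this gives $\#\mu^j\le\sum_{i\in N_j}\#\lambda^i$, and summing over $j$ yields $\sum_{j=1}^4\#\mu^j\le\sum_{i=1}^n\#\lambda^i\le 2(r+1)$. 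Hence $\mu^\bullet$ satisfies the column condition and is admissible data for the $n=4$ case of the conjecture.

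To finish, observe that for each $\mu^\bullet$ in the common index set of \eqref{GWE} and \eqref{CLEP} one of two things happens: either some factor $c^{\mu^j}_{\lambda(N_j)}$ vanishes, and then both the $\mu^\bullet$-summand of \eqref{GWE} and the $\mu^\bullet$-summand of \eqref{CLEP} are zero because their remaining factors agree by part (b); or all four factors are nonzero, and then $\mu^\bullet$ satisfies the column condition, so the hypothesis of the Proposition gives $I^{1,\Gr_{r,r+l}}_{1,\mu^\bullet}=c_1(\mathbb{V}(\sL_{r+1},\mu^\bullet,l))$ (as degrees of divisors on $\M_{0,4}\cong\mathbb{P}^1$) and the $\mu^\bullet$-summands agree. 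Therefore \eqref{GWE} and \eqref{CLEP} are equal for every $F$-curve, and $I^{1,\Gr_{r,r+l}}_{1,\lambda^\bullet}\equiv c_1(\mathbb{V}(\sL_{r+1},\lambda^\bullet,l))$.

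I expect the only step with genuine content to be the propagation of the column condition, specifically the inequality $\#\mu^j\le\sum_{i\in N_j}\#\lambda^i$; everything else is bookkeeping that transports the argument for Theorem~\ref{thm:ConjReduction} through the extra hypothesis. If the bound on the number of parts of a Littlewood--Richardson product is not already among the facts proved in \S\ref{a:LR}, I would include a short proof: in a Littlewood--Richardson skew tableau of shape $\nu/\mu$ and content $\kappa$, the first column of $\nu/\mu$ has $\#\nu-\#\mu$ cells with strictly increasing entries drawn from $\{1,\dots,\#\kappa\}$, so $\#\nu\le\#\mu+\#\kappa$, and one iterates this over the $n$-fold product.
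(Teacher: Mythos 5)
Your proposal is correct and follows essentially the same route as the paper: both arguments run the $F$-curve comparison from Theorem~\ref{thm:ConjReduction} and propagate the column condition to the contributing $4$-tuples $\mu^\bullet$ via the standard bound $\#\mu^j\le\sum_{i\in N_j}\#\lambda^i$ for nonzero Littlewood--Richardson coefficients (the paper's fact \eqref{prod_of_short}), together with the summation $\sum_j|\mu^j|=\sum_i|\lambda^i|=(r+1)(l+1)$. Your extra remarks (that the rank factors agree unconditionally by part (b), and the tableau proof of the parts bound) are sound but not needed beyond what the paper already records.
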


\begin{proof}
For partitions $\lambda,\mu,\nu$, consider the Littlewood-Richardson coefficient $c_{\lambda, \mu}^\nu$.  We use the following basic fact from Schubert calculus:
\begin{equation} \label{prod_of_short}
\text{If $c_{\lambda ,\mu}^\nu \neq 0$, then $\# \nu \leq \#\lambda + \#\mu$}.
\end{equation}
We also use the ideas in the proof of Theorem~\ref{thm:ConjReduction}. In particular, we gave a correspondence between the non-zero terms of the sums in \eqref{GWE} and \eqref{CLEP}.
In a factor of a non-zero term
\[I^{0,\Gr_{r,r+l}}_{0, \lambda(N_j) \cup (\mu^j)^\vee} = \prod_{i \in N_j} \sigma_{\lambda^i} \cdot \sigma_{(\mu^j)^\vee} = {\rm{Rk}}\mathbb{V}(\sL_{r+1}, \{{\lambda^i}\}_{i \in N_j}\cup {(\mu^j)^*},l),\]
the partition $\mu^j$ must appear with non-zero coefficient in the product $\prod_{i \in N_j} \lambda^i$.

 Now suppose that $(\lambda^1, \ldots, \lambda^n)$ satisfy the column condition.
  By  \eqref{prod_of_short}, the $\nu$ with non-zero coefficients in $\prod_{i \in N_j} \lambda^i$ have $\#\nu \leq \sum_{i \in N_j} \# \lambda^i$. Hence, the term for $(\mu^1, \ldots, \mu^4)$ in \eqref{GWE} and \eqref{CLEP} is zero unless 
 \begin{equation} \label{boundmus}
 \# \mu^j \leq \sum_{i \in N_j} \# \lambda^i \qquad \text{for all $j$.}
 \end{equation}
 In particular, we actually need only show $I^{1,\Gr_{r,r+l}}_{1, \mu^{\bullet}} 
=  \ c_1(\mathbb{V}(\sL_{r+1},\mu^{\bullet},l))$ for $(\mu^1, \ldots, \mu^4)$ satisfying \eqref{boundmus}.
If our original collection $(\lambda^1, \ldots, \lambda^n)$ satisfies the column condition, then $(\mu^1, \ldots, \mu^4)$ satisfying \eqref{boundmus} satisfies
\[\sum_{j=1}^4 \#\mu^j \leq \sum_{j=1}^4 \sum_{i \in N_j} \# \lambda^i = \sum_{i=1}^n \#\lambda^i \leq 2(r+1),\]
which is the column condition for the $4$-tuple $(\mu^1, \ldots, \mu^4)$.
\end{proof}

\section{Connection to two-step flag varieties and GW-invariants for \texorpdfstring{$d=1$}{d=1}}\label{sec:lines} 

In this section, we  review the ``quantum-equals-classical" result  of \cite{BKT} which computes 3-pointed Gromov-Witten invariants on $\Gr_{r,r+l}$ as intersection numbers on a two-step flag variety $\Fl_{r-d,r+d;r+l}$ of nested subspaces $V_{r-d}\subset V_{r+d}$ in an $r+l$-dimensional vector space, with $\dim V_i=i$. We  extend this relationship in the case $d=1$ to $n$-pointed Gromov-Witten invariants. As a consequence,  we can compute dimension-$0$ GW classes on a two-step 
variety. When $n = 4$, the dimension-$0$ GW classes are divisors and this is a key step for our GW $\equiv$ CB result.

To state \cite[Corollary~1]{BKT}, we use the following terminology. 
As  discussed in \S\ref{sec:LR}, the basis of Schubert classes $\sigma_\lambda$ for $\Gr_{r,r+l}$ is indexed by partitions
$\lambda$ contained in an $r\times l$ rectangle. Such a partition $\lambda$ can be uniquely identified with a permutation $w_\lambda\in S_{r+l}$ by defining $w_\lambda(i)=\lambda_{r-i+1}+i$ for $1\leq i\leq r$ and then ordering the values $w_\lambda(r+1)<\cdots<w_\lambda(r+l)$. Note that  $w_\lambda(i)<w_\lambda(i+1)$ for  $i\neq r$, i.e.\  $w_\lambda$ is a  Grassmann permutation with only possible descent at $r$ .

For $1\leq d\leq \min\{r,l\}$ and $\lambda\subseteq (l^r)$, consider the permutation obtained from $w_\lambda$ by sorting the values $w_\lambda(r-d+1),\dots,w_\lambda(r+d)$ in increasing order. By construction, this has descents at most at $r-d$ and $r+d$, and so  corresponds to a Schubert class $\sigma_\lambda^{(d)}$ on the flag variety $\Fl_{r-d,r+d;r+l}$ (for more details on two-step flag varieties, including the Schubert basis as well as an alternative basis of classes on two-step flag varieties indexed by pairs of partitions, following \cite{gukalashnikov}, see \S\ref{a:twostep}).

By \cite[Corollary~1]{BKT}, for  partitions $\lambda^1,\lambda^2,\lambda^3\subseteq (l^r)$ satisfying $|\lambda^1|+|\lambda^2|+|\lambda^3|=rl+(r+l)d$,  we have:
\begin{equation}\label{eq:twostep}
I_d(\sigma_{\lambda^1},\sigma_{\lambda^2},\sigma_{\lambda^3})=  \int_{\Fl_{r-d, r+d; r+l}}\sigma_{\lambda^1}^{(d)} \cdot\sigma_{\lambda^2}^{(d)} \cdot\sigma_{\lambda^3}^{(d)}.
\end{equation}

We will show that the $n$-pointed Gromov-Witten invariant $I_1(\sigma_{\lambda^1},\dots,\sigma_{\lambda^n})$ can be computed using classical Schubert calculus on  $\Fl_{r-1,r+1;r+l}$.
 
\begin{proposition}\label{prop:lines} Consider an $n$-tuple of partitions $\lambda^\bullet=(\lambda^1,\dots,\lambda^n)$ contained in an $r\times l$ rectangle, satisfying $c\coloneqq\sum_{i\in [n]}|\lambda^i|-r-l-rl=n-3$, and let $\sigma_{\lambda^1}^{(1)},\cdots,\sigma_{\lambda^n}^{(1)}$ be the associated classes in $\H^*\Fl_{r-1,r+1;r+l}$.
Then
\[  I^{n-3,\Gr_{r,r+l}}_{1,\lambda^\bullet} = I_1(\sigma_{\lambda^1}, \ldots, \sigma_{\lambda^n}) =  \int_{\Fl_{r-1, r+1; r+l}}\prod_{i=1}^{n}\sigma_{\lambda^i}^{(1)}. \]
When $n=3$, this recovers \eqref{eq:twostep} for $d=1$. When $n=4$, this computes the GW-divisor $I^{1,\Gr_{r,r+l}}_{1,\lambda^1,\dots,\lambda^4}$.
\end{proposition}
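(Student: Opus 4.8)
The plan is to reduce the $n$-pointed statement to the three-pointed quantum-equals-classical result \eqref{eq:twostep} of \cite{BKT} by an induction on $n$ that mirrors the splitting/associativity structure of Gromov-Witten invariants. First I would recall the ``divisor/forgetful'' recursion for genus-zero Gromov-Witten invariants of $\Gr_{r,r+l}$: for $n\geq 4$, the $n$-pointed degree-one invariant $I_1(\sigma_{\lambda^1},\dots,\sigma_{\lambda^n})$ can be written as a sum of products of lower-pointed invariants obtained by pulling back along a boundary divisor of $\M_{0,n}$ (equivalently, by the WDVV/associativity relations in $\QH^*\Gr_{r,r+l}$), where the internal degrees $d^1,d^2$ summing to $1$ force one factor to be a classical (degree-$0$, i.e.\ honest Littlewood-Richardson) invariant and the other to be degree $1$. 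Concretely, $I_1(\sigma_{\lambda^1},\dots,\sigma_{\lambda^n}) = \sum_{\mu} \bigl(\int_{\Gr}\sigma_{\lambda^1}\sigma_{\lambda^2}\sigma_{\mu^\vee}\bigr)\, I_1(\sigma_\mu,\sigma_{\lambda^3},\dots,\sigma_{\lambda^n})$, by splitting off the two markings $1,2$ and noting that the degree must distribute as $0+1$ (the all-markings-on-the-degree-$1$-side term with $1,2$ on a contracted rational tail contributes the stated classical coefficient, and any $1+0$ split vanishes since a three-pointed degree-$1$ invariant with two small classes is handled by dimension).

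Next I would establish the matching recursion on the flag-variety side. Here I would use the two-step flag description and the key geometric fact underpinning \cite{BKT}: a degree-one stable map to $\Gr_{r,r+l}$ is a line, and the space of lines meeting prescribed Schubert conditions is governed by the incidence variety $\Fl_{r-1,r+1;r+l}$, with the kernel $V_{r-1}$ and span $V_{r+1}$ of the line recording the ``$d=1$'' data. The classes $\sigma_{\lambda^i}^{(1)}$ are precisely the pullbacks of $\sigma_{\lambda^i}$ under the two projections $\Fl_{r-1,r+1;r+l}\to\Gr_{r,r+l}$ composed with the correct incidence, so a product of $n$ such classes on $\Fl_{r-1,r+1;r+l}$ satisfies the same ``peel off two factors via a classical Littlewood-Richardson coefficient'' recursion: $\int_{\Fl}\prod_i\sigma_{\lambda^i}^{(1)} = \sum_\mu (\int_{\Gr}\sigma_{\lambda^1}\sigma_{\lambda^2}\sigma_{\mu^\vee})\int_{\Fl}\sigma_\mu^{(1)}\prod_{i\geq 3}\sigma_{\lambda^i}^{(1)}$, because on $\Fl$ the Schubert classes $\sigma_{\lambda^1}^{(1)},\sigma_{\lambda^2}^{(1)}$ are pulled back from a single Grassmannian factor and their product expands by the ordinary Littlewood-Richardson rule before being pushed into the flag variety. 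The base case $n=3$ is exactly \eqref{eq:twostep} for $d=1$.

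The identification $I^{n-3,\Gr_{r,r+l}}_{1,\lambda^\bullet} = I_1(\sigma_{\lambda^1},\dots,\sigma_{\lambda^n})$ is immediate from \eqref{eq:GWinvariant} once one checks the dimension bookkeeping: the hypothesis $c = \sum_i|\lambda^i| - r - l - rl = n-3$ is precisely $\sum_i|\lambda^i| = (r+l)\cdot 1 + rl + (n-3)$, so the cycle $\eta_*(\cap_i ev_i^*\sigma_{\lambda^i})$ has codimension $n-3 = \dim\M_{0,n}$ and hence is a multiple of the point class equal to the Gromov-Witten invariant. The final sentences of the proposition are then formal: $n=3$ gives back \eqref{eq:twostep}, and $n=4$ gives a codimension-one class on $\M_{0,4}\cong\mathbb{P}^1$, i.e.\ the GW-divisor $I^{1,\Gr_{r,r+l}}_{1,\lambda^1,\dots,\lambda^4}$, now expressed as an intersection number $\int_{\Fl_{r-1,r+1;r+l}}\prod_{i=1}^4\sigma_{\lambda^i}^{(1)}$.

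The main obstacle I anticipate is making the flag-variety recursion genuinely rigorous rather than heuristic: one must verify that the classes $\sigma_{\lambda^i}^{(1)}$ multiply on $\Fl_{r-1,r+1;r+l}$ in a way compatible with the Gromov-Witten splitting, i.e.\ that pushing forward the line-incidence correspondence intertwines the product of $n$ two-step Schubert classes with the $n$-pointed degree-one Gromov-Witten class. The cleanest route is probably to avoid the flag variety entirely in the inductive step and instead run the induction purely inside $\QH^*\Gr_{r,r+l}$ using the structure constants $c^{d,\nu}_{\lambda^\bullet}$ of \eqref{eq:quantum-product}: show $I_1(\sigma_{\lambda^1},\dots,\sigma_{\lambda^n})$ equals the coefficient of $q^1\sigma_{(l^r)^\vee}=q\cdot[\text{pt}]$'s dual in $\sigma_{\lambda^1}*\cdots*\sigma_{\lambda^n}$, use associativity of $*$ to reduce to the three-pointed case, and only at the very end invoke \eqref{eq:twostep} to convert each surviving three-pointed $d=1$ invariant into a flag integral — then reassemble via the same recursion, now run on the flag side, which for $d=1$ lines is a standard and well-documented computation. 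I would also need to double-check the edge behavior when some $\lambda^i$ is empty or when the two-step flag degenerates ($r=1$ or $l=1$, where $\Fl_{r-1,r+1;r+l}$ may collapse), ensuring the formula still reads correctly.
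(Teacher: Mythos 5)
Your first equality (dimension bookkeeping via \eqref{eq:GWinvariant}) and the closing remarks are fine, but the engine of your argument fails: the recursion $I_1(\sigma_{\lambda^1},\dots,\sigma_{\lambda^n})=\sum_{\mu}\bigl(\int_{\Gr_{r,r+l}}\sigma_{\lambda^1}\sigma_{\lambda^2}\sigma_{\mu^\vee}\bigr)\,I_1(\sigma_\mu,\sigma_{\lambda^3},\dots,\sigma_{\lambda^n})$ is not a consequence of the splitting/WDVV axiom and is in fact false. Splitting computes invariants that carry a boundary (or point) constraint pulled back from $\M_{0,n}$; here the evaluation conditions already exhaust the dimension of $\M_{0,n}(\Gr_{r,r+l},1)$, so there is no room for such a constraint and no such peel-off identity holds. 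Concretely, take $r=1$, $l=2$, $n=4$, $\lambda^\bullet=\bigl((2),(2),(1),(1)\bigr)$ on $\Gr_{1,3}=\mathbb{P}^2$: the hypotheses of the proposition hold, and $I_1=1$ (the unique line through two general points, meeting the two general lines), yet every peel-off gives $0$ — peeling the two point classes gives vanishing classical coefficients, and peeling the two line classes leaves the dimensionally overdetermined count of lines through three general points. (The failure is not an $r=1$ artifact: $\Gr_{2,4}$ with $\lambda^\bullet=((2,2),(2,1),(1),(1))$ gives invariant $1$ versus recursion output $0$.) The same example refutes your flag-side recursion: $\sigma_\lambda^{(1)}=q_*p^*\sigma_\lambda$ is a \emph{pushforward}, not a pullback from a Grassmannian factor, and $\sigma_{(2)}^{(1)}\cdot\sigma_{(2)}^{(1)}$ is the point class on $\Fl_{0,2;3}\cong\Gr_{2,3}$ even though $\sigma_{(2)}\cdot\sigma_{(2)}=0$ classically. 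Your proposed fallback — identifying $I_1(\sigma_{\lambda^1},\dots,\sigma_{\lambda^n})$ with the $q^1$-coefficient of the $n$-fold quantum product — is also false: in the same example $\sigma_{(2)}*\sigma_{(2)}*\sigma_{(1)}*\sigma_{(1)}=q^2$ has no $q^1$ term; as noted in \S\ref{sec:QH}, quantum Littlewood--Richardson coefficients are not themselves $n$-pointed invariants once $n\geq 4$.

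The paper's proof uses no recursion at all. It first proves Lemma~\ref{Geometric}: a line meets each general Schubert translate in at most one point (or lies inside it), and a transversality statement of Fulton--Pandharipande forces the relevant intersection to be supported on maps with irreducible domain, so $I_1(\sigma_{\lambda^1},\dots,\sigma_{\lambda^n})$ literally counts lines meeting $g_1X_{\lambda^1},\dots,g_nX_{\lambda^n}$. It then passes through the incidence correspondence $\Fl_{r-1,r,r+1;r+l}$ over $\Gr_{r,r+l}$ and $\Fl_{r-1,r+1;r+l}$: the locus of lines meeting $X_\lambda$ is $X_\lambda^{(1)}=q(p^{-1}(X_\lambda))$, with $q$ generically one-to-one onto its image so that $[X_\lambda^{(1)}]=q_*p^*\sigma_\lambda=\sigma_\lambda^{(1)}$, and Kleiman transversality on the homogeneous space $\Fl_{r-1,r+1;r+l}$ converts the line count into $\int_{\Fl_{r-1,r+1;r+l}}\prod_i\sigma_{\lambda^i}^{(1)}$. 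If you want to repair your write-up, you must work with the correspondence $q_*p^*$ and a genericity/transversality argument of this kind; any route through a product rule expanding $\sigma_{\lambda^1}^{(1)}\cdot\sigma_{\lambda^2}^{(1)}$ by Grassmannian Littlewood--Richardson coefficients, or through the structure constants of $\QH^*\Gr_{r,r+l}$, will not close the gap.
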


\begin{remark}
The second equality in Proposition~\ref{prop:lines} doesn't require \(n\geq3\).
\end{remark}

Before proving Proposition~\ref{prop:lines}, we need the following lemma, which is a special case of the main theorem in \cite{martin}. See also \cite[Proposition~4.1.5]{KV07} for the projective space case. We give a simple alternative proof for our case.

\begin{lemma}\label{Geometric} Let $\lambda^\bullet=(\lambda^1,\dots,\lambda^n)$ be an $n$-tuple of partitions contained in an $r\times l$ rectangle  satisfying $\sum_{i\in [n]}|\lambda^i|-r-l-rl=n-3$. Then
the $n$-pointed Gromov-Witten invariant $I_1(\sigma_{\lambda^1},\dots,\sigma_{\lambda^n})$ is equal to the number of lines in $\Gr_{r,r+l}$ that meet $g_1X_{\lambda^1}, \cdots, g_nX_{\lambda^n}$, where $g_iX_{\lambda^i}$ are general translates of the associated Schubert varieties in  $\Gr_{r,r+l}$.\end{lemma}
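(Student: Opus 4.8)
The plan is to realize the $n$-pointed genus-zero degree-one Gromov-Witten invariant as a transverse intersection on the Kontsevich space $\M_{0,n}(\Gr_{r,r+l},1)$ and then to reinterpret that intersection geometrically. Recall from \S\ref{sec:GWclasses} that $I_1(\sigma_{\lambda^1},\dots,\sigma_{\lambda^n}) = I^{n-3,\Gr_{r,r+l}}_{1,\lambda^\bullet}$ equals $\int_{\M_{0,n}(\Gr_{r,r+l},1)}\prod_{i=1}^n ev_i^*\sigma_{\lambda^i}$, since the codimension condition $\sum_i|\lambda^i| - (r+l) - rl = n-3 = \dim\M_{0,n}$ makes the pushforward along $\eta$ a zero-cycle whose degree is this integral. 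By Kleiman transversality applied to the (homogeneous, hence $\mathrm{PGL}_{r+l}$-acted-upon) target, for general translates $g_i$ the loci $ev_i^{-1}(g_iX_{\lambda^i})$ meet transversally in the expected dimension, so the integral counts, with multiplicity one, the stable maps $(f,C,p_1,\dots,p_n)$ of degree $1$ with $f(p_i)\in g_iX_{\lambda^i}$ for all $i$.

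Next I would argue that every such stable map is in fact an embedded line with $n$ distinct marked points on it, so that the count of stable maps coincides with the count of lines meeting the $n$ Schubert varieties. A degree-one stable map from a genus-zero curve $C$ either (i) has irreducible domain $\bP^1$ mapping isomorphically onto a line $\ell\subseteq\Gr_{r,r+l}$, or (ii) has reducible domain with exactly one component mapped isomorphically to a line and all other components contracted. In case (ii) a contracted component carrying fewer than three special points would violate stability, and one checks that for general $g_i$ no line meets two of the translates $g_iX_{\lambda^i}$ in a way forcing two marked points to collide onto a single contracted tree --- more precisely, the locus of stable maps with reducible domain has strictly smaller dimension than expected once the $g_i$ are general, because requiring two of the incidence conditions to be "absorbed" at one node imposes one extra condition (the sum of the relevant codimensions then exceeds $\dim\ell \cdot$ available freedom). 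Hence the general fiber consists only of honest lines with $n$ distinct points, and each such line $\ell$ contributes exactly one stable map (the marked points $p_i$ are determined as the preimages $f^{-1}(\ell\cap g_iX_{\lambda^i})$, which are single reduced points by transversality). This identifies the Gromov-Witten count with the number of lines meeting $g_1X_{\lambda^1},\dots,g_nX_{\lambda^n}$.

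The main obstacle I anticipate is the transversality/multiplicity bookkeeping in the previous paragraph: one must rule out, for general translates, contributions from stable maps with contracted components (ensuring the "honest line" locus is the whole fiber, not merely a dense open subset) and simultaneously confirm that each surviving line is counted with multiplicity exactly $1$ rather than some positive integer. For the first point the cleanest route is a dimension count on the boundary strata of $\M_{0,n}(\Gr_{r,r+l},1)$ combined with Kleiman applied stratum-by-stratum; for the second, transversality of the $ev_i^{-1}(g_iX_{\lambda^i})$ at each solution point, together with the fact that the space of lines through a point of $\Gr_{r,r+l}$ meeting a given Schubert variety behaves generically, gives reduced intersection. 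Since the statement is quoted as a special case of \cite{martin} (with the projective-space case in \cite[Proposition~4.1.5]{KV07}), I would, where convenient, invoke those references for the transversality input and only spell out the degree-one geometry that makes the "line count" interpretation concrete, keeping the argument short.
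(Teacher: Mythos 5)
Your overall strategy coincides with the paper's: interpret $I_1(\sigma_{\lambda^1},\dots,\sigma_{\lambda^n})$ as a transverse zero-dimensional intersection $\bigcap_i ev_i^{-1}(g_iX_{\lambda^i})$ on $\M_{0,n}(\Gr_{r,r+l},1)$, use the group action to get genericity (the paper does this by citing \cite[Lemma~14]{FP}, which is exactly your stratum-by-stratum Kleiman count and also handles the boundary strata), note that a degree-one map is an isomorphism onto a line, and then match stable maps with lines. Your treatment of the boundary (maps with contracted components) is fine in spirit; the paper gets the same conclusion, plus the disjointness of the points $L\cap g_iX_{\lambda^i}$, from the fact that the intersection is supported on the interior.

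The genuine gap is in the step where you claim each solution line $\ell$ determines a \emph{unique} stable map because "the preimages $f^{-1}(\ell\cap g_iX_{\lambda^i})$ are single reduced points by transversality." The solution lines are finitely many special lines, not general ones, so Kleiman-type genericity says nothing about how $\ell$ itself meets $g_iX_{\lambda^i}$; a priori $\ell$ could meet some translate in two isolated points, in which case that line would support two distinct reduced points of $\bigcap_i ev_i^{-1}(g_iX_{\lambda^i})$ and the Gromov--Witten number would strictly exceed the number of lines. Finiteness only rules out the case $\ell\subseteq g_iX_{\lambda^i}$, not the secant case, and a naive dimension count does not save you either: for a codimension-one Schubert variety, "meeting twice" imposes only $2(c-1)=0$ conditions, so secancy cannot be excluded on dimension grounds alone. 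What is needed is the special geometry of Schubert varieties, and this is precisely the paper's opening observation: writing $\ell=\{\Sigma: K\subset\Sigma\subset S\}$ and a Schubert variety as an intersection of loci $\{\Sigma:\dim(\Sigma\cap F)\ge j\}$, one checks that if $\ell$ meets such a locus in two distinct points then $\ell$ is contained in it, so $\ell\cap X$ is a point, all of $\ell$, or empty. With that dichotomy, the containment case is killed by finiteness (or by the supported-on-the-interior statement), and each line meets each translate in exactly one point, giving the bijection. Your proposal needs either this linear-algebra lemma or an equivalent substitute (e.g.\ a codimension bound for the locus of lines contained in $X_\lambda$); invoking \cite{martin} wholesale would of course settle it, but then one is citing the statement rather than proving it, which is what the paper's "simple alternative proof" is meant to avoid.
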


\begin{proof}
  First note that if $L$ is a line in $\Gr_{r,r+l}$ and $X$ a Schubert variety in $\Gr_{r,r+l}$, then $L\cap X$ is $L$, one point, or empty. To see this, we have $L=\{\Sigma\in \Gr_{r,r+l}: K\subset\Sigma\subset S\}$ for some subspaces $K, S$ in $\C^{r+l}$ with $\dim K=r-1$, $\dim S=r+1$. Each Schubert variety is an intersection of Schubert varieties of the form $\{\Sigma\in \Gr_{r,r+l}: \text{dim}(\Sigma\cap F)\geq j\}$ for some subspace $F$ of $\C^{r+l}$. Without loss of generality, assume $X$ is of this form. Suppose $L\cap X$ contains two distinct points.  Then either $\dim(K\cap F)\geq j$, or $\dim(K\cap F)=j-1$ and $\dim(S\cap F)=j+1$. Either way, $L$ is contained in $X$.

Let $L$ be a line in $\Gr_{r,r+l}$ that meets all $g_iX_{\lambda^i}$. By the above, each $g_iX_{\lambda^i}$ contains either the entire $L$ or exactly one point in $L$. On the other hand, the intersections $g_iX_{\lambda^i}\cap L$ must be disjoint, because otherwise we can construct a map in the boundary of $\M_{0,n}(\Gr_{r,r+l},1)$ with image $L$, but the intersection $ev_1^{-1}(g_1X_{\lambda^1})\cap \cdots\cap ev_n^{-1}(g_nX_{\lambda^n})$ is supported on $\op{M}_{0,n}(\Gr_{r,r+l},1)$ \cite[Lemma~14]{FP}. Therefore, each $g_iX_{\lambda^i}$ must meet $L$ at a distinct point in $L$. Note that a degree $1$ map from $\mathbb{P}^1$ to $\Gr_{r,r+l}$ is an isomorphism onto its image. Since the choice of marked points exists and is unique, each $L$ uniquely determines a map in $ev_1^{-1}(g_1X_{\lambda^1})\cap \cdots\cap ev_n^{-1}(g_nX_{\lambda^n})$ and vice versa.
\end{proof}

\begin{proof}[Proof of Proposition~\ref{prop:lines}]
The first equality is just the observation in \eqref{eq:GWinvariant} that the degree of the dimension-0 GW-class is equal to the Gromov-Witten invariant.

Using Lemma~\ref{Geometric}, we can compute $I_1(\sigma_{\lambda^1}, \ldots, \sigma_{\lambda^n})$ using intersection theory on a two-step flag variety. Consider the diagram
\begin{center}
\begin{tikzcd}
\Fl_{r - 1, r, r+1; r+l} \arrow{r}{p} \arrow{d}[swap]{q} & \Gr_{r, r+l} \\
\Fl_{r-1, r+1; r+l}.
\end{tikzcd}
\end{center}
For a Schubert variety $X_{\lambda}$ in $\Gr_{r, r+l}$, let 
\[X_{\lambda}^{(1)}\coloneqq q(p^{-1}(X_\lambda))=\{(A,B)\in \Fl_{r-1, r+1; r+l}\vert\exists V\in X_{\lambda} \text{ with } A\subset V\subset B\}\]
be the Schubert variety in $\Fl_{r-1, r+1; r+l}$ considered in \cite[\S 2.2]{BKT} and $\sigma_{\lambda}^{(1)}$ its class. When one of the \(X_{\lambda^i}\) is the entire \(\Gr_{r,r+l}\), Proposition~\ref{prop:lines} holds because all three numbers are \(0\). Now assume each \(X_{\lambda^i}\) has positive codimension. 

When \(X_{\lambda}\) has positive codimension, it is contained in a Schubert divisor, which intersects a general line at one point.
Therefore, a general line meeting $X_{\lambda}$ meets it in one point and the map $q$ sends $p^{-1}(X_\lambda)$ generically one-to-one onto its image. It follows that 
\begin{equation}\label{eq:sigmalambda}
\sigma_{\lambda}^{(1)} = [q(p^{-1}(X_{\lambda})] = q_*p^*[X_{\lambda}] = q_*p^*\sigma_{\lambda}. \end{equation}

Moreover, $p^{-1}(X_\lambda)$ is the space of pairs $(L, V)$ where $L$ is a line in $\Gr_{r, r+l}$ and and $V \in L \cap X_\lambda$ is a point. Hence, $X_\lambda^{(1)}$ is the subvariety of lines $L$ on $\Gr_{r, r+l}$ that meet $X_\lambda$, so Lemma~\ref{Geometric} shows that
\begin{equation} \label{int-on-Z} \int_{\overline{M}_{0,n}(\Gr_{r,r+l},1)}\prod_{i=1}^n ev_i^*\sigma_{\lambda^i} = \int_{\Fl_{r-1, r+1; r+l}}\prod_{i=1}^{n}\sigma_{\lambda^i}^{(1)}.
\end{equation}
Since the left-hand side is exactly the $n$-pointed Gromov-Witten invariant $I_1(\sigma_{\lambda^1}, \ldots, \sigma_{\lambda^n})$ \cite{FP}, this concludes the proof.
\end{proof}

  \section{The  GW \texorpdfstring{$\equiv$}{=} CB Conjecture for a class of partitions}\label{ElanaIdea}

 By the previous section, we have turned the problem of computing degrees of GW divisors into one of computing intersections of certain classes on $\Fl_{r-1,r+1;r+l}$.
When  partitions $(\lambda^1, \ldots, \lambda^4)$ satisfy the column condition (Definition~\ref{ColumnDef}), we show that this product can be expressed in terms of intersection products on two Grassmannians. The main result of this section is the following.

\begin{proposition}\label{pro:conjspecialcase} Let $(\lambda^1, \ldots, \lambda^4)$ be partitions indexing Schubert classes in $\Gr_{r,r+l}$. Suppose $\sum_i |\lambda^i| =(r+1)(l+1)$ and  $\sum_i \#\lambda^i \le 2(r+1)$. Then
\[I^{1,\Gr_{r,r+l}}_{1,\lambda^1,\ldots,\lambda^4}\equiv c_1(\mathbb{V}(\sL_{r+1},\lambda^{\bullet},l)).\]
If the strict inequality $\sum_i \#\lambda^i <2(r+1)$ holds, both divisors are $0$.
\end{proposition}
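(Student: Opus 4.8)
The plan is to prove both assertions on $\M_{0,4}$. Since $\M_{0,4}\cong\mathbb{P}^1$, the first Chern class of a vector bundle is simply its degree, so it suffices to prove the equality of integers $\deg I^{1,\Gr_{r,r+l}}_{1,\lambda^\bullet}=\deg\mathbb{V}(\sL_{r+1},\lambda^\bullet,l)$ and then to see that each side vanishes when $\sum_i\#\lambda^i<2(r+1)$. On the Gromov--Witten side the first step is Proposition~\ref{prop:lines} with $n=4$, which identifies the degree of the GW-divisor with the classical intersection number $I_1(\sigma_{\lambda^1},\dots,\sigma_{\lambda^4})=\int_{\Fl_{r-1,r+1;r+l}}\prod_{i=1}^4\sigma_{\lambda^i}^{(1)}$ on a two-step flag variety, moving the GW half of the problem into ordinary Schubert calculus.

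The second step, Proposition~\ref{pro:identity}, uses the column condition to collapse this two-step intersection number into intersection products on two smaller Grassmannians. The starting point is the line-counting interpretation of Lemma~\ref{Geometric}: the integral equals the number of lines $L=(K\subset S)$ in $\Gr_{r,r+l}$, with $\dim K=r-1$ and $\dim S=r+1$, meeting four general translates $g_iX_{\lambda^i}$. When $\sum_i\#\lambda^i\le 2(r+1)$ the partitions are short enough that one can separate the incidence data attached to the kernel $K$ from that attached to the span $S$ and, via a factorization identity for Littlewood--Richardson coefficients of the type living on the boundary of the Horn cone (cf.~Lemma~\ref{lem:LR}), rewrite the count as a product of intersection numbers, one on a Grassmannian of $(r-1)$-planes and one on a Grassmannian of $(r+1)$-planes; the column condition is precisely the budget that lets the rows of the $\lambda^i$ be distributed between the two.

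On the conformal-blocks side, the parallel identity is Proposition~\ref{V:identity}. Here I would start from the formula for the degree of a conformal blocks bundle on $\M_{0,4}$ (Fakhruddin \cite{fakhr}), which at the critical level reduces to an expression in the ranks of the associated $2$- and $3$-pointed sub-bundles (cf.~\cite{BGMA}). Each rank is turned into a quantum-cohomology computation by Witten's Dictionary (Theorem~\ref{WD}); using the ``compute classically, then strip rim-hooks'' form of the Main Lemma (Lemma~\ref{mainlemma}), packaged here as Lemma~\ref{newwitten}, and invoking the column condition, the relevant quantum products contribute only their classical, degree-$0$ parts, so that the degree collapses to the very same two-Grassmannian expression obtained on the GW side. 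Comparing Proposition~\ref{pro:identity} with Proposition~\ref{V:identity} then yields $\deg I^{1,\Gr_{r,r+l}}_{1,\lambda^\bullet}=\deg\mathbb{V}(\sL_{r+1},\lambda^\bullet,l)$. Finally, when $\sum_i\#\lambda^i<2(r+1)$ the common two-Grassmannian expression carries a factor that is a Littlewood--Richardson coefficient indexed by partitions whose total size fails to fill the allotted rectangle, hence is zero, so both divisors are $0$.

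I expect the conformal-blocks identity, Proposition~\ref{V:identity}, to be the main obstacle. The GW side is comparatively transparent once Proposition~\ref{prop:lines} and Lemma~\ref{Geometric} are in hand: it is classical incidence geometry plus a Littlewood--Richardson identity. On the CB side, the $\M_{0,4}$ degree formula naturally outputs a signed sum over intermediate weights of (conformal weight)$\,\times\,$(product of ranks), and coaxing this into the clean product of Grassmannian intersection numbers requires (i) exact rank computations for the auxiliary bundles via Witten's Dictionary, (ii) showing that the rim-hook, i.e.\ quantum, corrections vanish under the column condition so that only classical terms survive, and (iii) matching the conformal-weight coefficients against the combinatorics coming from the two Grassmannians. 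Getting all three to fit---so that the CB answer is literally the same product the line count produces---is the technical heart of the argument.
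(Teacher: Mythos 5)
Your treatment of the Gromov--Witten side follows the paper: Proposition~\ref{prop:lines} (via Lemma~\ref{Geometric}) plus the two-step flag computation and the Littlewood--Richardson factorization yield exactly Proposition~\ref{pro:identity}, $I^{1,\Gr_{r,r+l}}_{1,\lambda^\bullet}=I^{0,\Gr_{r-1,r+1}}_{0,\alpha^\bullet}\,I^{0,\Gr_{r+1,r+l}}_{0,\beta^\bullet}$, and the vanishing when $\sum_i\#\lambda^i<2(r+1)$. The gap is on the conformal-blocks side, and it is precisely the step you yourself flag as ``the technical heart'': you never actually close the comparison. Proposition~\ref{V:identity} gives $c_1(\mathbb{V}(\sL_{r+1},\lambda^\bullet,l))=c_1(\mathbb{V}(\sL_{r+1},\overline{\alpha}^\bullet,1))\,\mathrm{Rk}\,\mathbb{V}(\sL_{r+1},\beta^\bullet,l-1)$; comparing this with Proposition~\ref{pro:identity} is \emph{not} automatic, because one still has to identify the level-one factor $c_1(\mathbb{V}(\sL_{r+1},\overline{\alpha}^\bullet,1))$ with the Grassmannian number $I^{0,\Gr_{r-1,r+1}}_{0,\alpha^\bullet}$. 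The paper does this by invoking the already-known $l=1$ case of the GW $\equiv$ CB conjecture \cite[Theorem~3.1]{BG} to write $c_1(\mathbb{V}(\sL_{r+1},\overline{\alpha}^\bullet,1))=I^{1,\Gr_{r,r+1}}_{1,\overline{\alpha}^\bullet}$, and then applying Proposition~\ref{pro:identity} a second time, now with $l=1$, to reduce this to $I^{0,\Gr_{r-1,r+1}}_{0,\alpha^\bullet}$; the other factor is handled by Witten's Dictionary in the above-critical-level regime, $\mathrm{Rk}\,\mathbb{V}(\sL_{r+1},\beta^\bullet,l-1)=c^{(l-1)^{r+1}}_{\beta^\bullet}=I^{0,\Gr_{r+1,r+l}}_{0,\beta^\bullet}$. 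Your plan contains neither the appeal to the known $l=1$ case nor any substitute for it.

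Your proposed substitute --- computing the $\M_{0,4}$ degree directly from Fakhruddin's formula and ``matching the conformal-weight coefficients against the combinatorics coming from the two Grassmannians'' --- is exactly the computation you do not perform, so as written this is a plan rather than a proof; the paper avoids it entirely by the route above, whose only inputs on the CB side are the rank equality (Lemma~\ref{newwitten} plus Lemma~\ref{lem:LR}), the additive identity \cite[Proposition~19]{BGMB}, and critical-level vanishing \cite[Prop~1.3]{BGMA}, none of which appear in your sketch. A secondary inaccuracy: the column condition does not make the ``rim-hook, i.e.\ quantum, corrections vanish.'' The rank at the critical level is a genuinely degree-one quantum Littlewood--Richardson coefficient; Lemma~\ref{lem:quantum-classical} shows that, under the column condition, a single rim-hook removal converts it into the classical coefficient $c^{(l^{r+1},1^{r+1})}_{\lambda^\bullet}$ for a larger shape --- the quantum term is transformed, not killed. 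Finally, note that in the strict-inequality case your vanishing argument only covers the GW side; the CB vanishing needs its own reason (e.g.\ the rank itself is zero, since no partition of the required length can occur in the classical product), which your appeal to ``the common two-Grassmannian expression'' presupposes rather than proves.
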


Combining Propositions \ref{prop:ReductionColumnId} and \ref{pro:conjspecialcase} gives Theorem~\ref{GWCB}.
Proposition~\ref{pro:conjspecialcase} is proved in \S\ref{Proof:pro:conjspecialcase}.

\subsection{The Gromov-Witten side}\label{GWSide}

We use Proposition~\ref{pro:comparison} to compute degree 1, 4-pointed Gromov-Witten invariants on $\Gr_{r,r+l}$ via Schubert calculus on the two-step flag variety $\Fl_{r-1,r+1;r+l}$. 

We first explain how to describe Schubert classes $\sigma_{\lambda}^{(1)}$ on $\Fl_{r-1,r+1;r+l}$  using   pairs of partitions, following the notation of \S\ref{a:twostep}.

\begin{definition} \label{assoc-pair}
Given a partition $\lambda\subseteq (l^r)$,  define $\alpha$ to be a single column of height $\#\lambda-1$, i.e.\  $\alpha=(1^{\#\lambda-1})$. We picture $\alpha$ as the first column of $\lambda$ minus a box. Define $\beta$ to be the partition obtained by removing the first column of $\lambda$.  We view $\alpha$ as lying in an $(r-1)\times 2$ rectangle and $\beta$ in an $(l-1)\times r$ rectangle.
\end{definition}

\begin{example}
If $\lambda = (4, 4, 2, 1)$, then $\alpha = (1, 1, 1)$ and $\beta = (3, 3, 1)$.
\begin{center}
\begin{tikzpicture}
\draw(0, -.5) rectangle (.5, 0);
\draw (0, 0) rectangle (.5, .5);
\draw (.5, 0) rectangle (1, .5);
\draw(0, .5) rectangle (.5, 1);
\draw (.5, .5) rectangle (1, 1);
\draw (1, .5) rectangle (1.5, 1);
\draw (1.5, .5) rectangle (2, 1);
\draw (0, 1) rectangle (.5, 1.5);
\draw (.5, 1) rectangle (1, 1.5);
\draw (1, 1) rectangle (1.5, 1.5);
\draw (1.5, 1) rectangle (2, 1.5);
\node at (-.4, .5) {$\lambda =$};
\end{tikzpicture}
\hspace{.7in}
\begin{tikzpicture}
\draw[fill=red!50] (0, -.5) rectangle (.5, 0);
\draw[fill=red!50] (0, 0) rectangle (.5, .5);
\draw [fill=blue!50] (.5, 0) rectangle (1, .5);
\draw[fill=red!50] (0, .5) rectangle (.5, 1);
\draw[fill=blue!50] (.5, .5) rectangle (1, 1);
\draw[fill=blue!50] (1, .5) rectangle (1.5, 1);
\draw[fill=blue!50] (1.5, .5) rectangle (2, 1);
\draw (0, 1) rectangle (.5, 1.5);
\draw[fill=blue!50] (.5, 1) rectangle (1, 1.5);
\draw[fill=blue!50] (1, 1) rectangle (1.5, 1.5);
\draw[fill=blue!50] (1.5, 1) rectangle (2, 1.5);
\node at (-.5, 0.25) {$\alpha$};
\draw[->] (-.3, .25) -- (.25, .25);
\node at (1.25, 2) {$\beta$};
\draw[->] (1.25, 1.7) -- (1.25, 1.25);
\end{tikzpicture}
\end{center}
\end{example}

\begin{proposition}[Gromov-Witten divisor identity]\label{pro:identity} Let $(\lambda^1,\ldots, \lambda^4)$ be partitions defining Schubert classes on the Grassmannian $\Gr_{r,r+l}$. 
Let $(\alpha^i, \beta^i)$ be the associated pair of partitions for $\lambda^i$ as in Definition~\ref{assoc-pair}.
 Suppose $\sum_i |\lambda^i| =(r+1)(l+1)$ and
$\sum_i \#\lambda^i \leq 2(r+1)$.
Then
\begin{equation} \label{propeq}
I^{1,\Gr_{r,r+l}}_{1,{\lambda^1,\dots,\lambda^4}} = I^{0,\Gr_{r-1,r+1}}_{0,{\alpha^1,\dots,\alpha^4}}I^{0,\Gr_{r+1,r+l}}_{0,{\beta^1,\dots,\beta^4}}.
\end{equation}
If $\sum_i \#\lambda^i < 2(r+1)$, then $I^{1,\Gr_{r,r+l}}_{1,{\lambda^1,\dots,\lambda^4}} = 0$.
\end{proposition}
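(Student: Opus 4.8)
\textbf{Proof proposal for Proposition~\ref{pro:identity}.}

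The plan is to compute $I^{1,\Gr_{r,r+l}}_{1,\lambda^1,\dots,\lambda^4}$ via Proposition~\ref{prop:lines}, which expresses it as $\int_{\Fl_{r-1,r+1;r+l}}\prod_{i=1}^4\sigma^{(1)}_{\lambda^i}$, and then to show that under the column condition this intersection number on the two-step flag variety factors as a product of two classical Littlewood--Richardson numbers, one on $\Gr_{r-1,r+1}$ and one on $\Gr_{r+1,r+l}$. The key structural input is the description of $\sigma^{(1)}_\lambda$ through the associated pair $(\alpha,\beta)$ of Definition~\ref{assoc-pair}, where $\alpha=(1^{\#\lambda-1})$ records (most of) the first column of $\lambda$ and $\beta$ is $\lambda$ with its first column removed. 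First I would recall, from \S\ref{a:twostep} (the basis of classes on $\Fl_{r-1,r+1;r+l}$ indexed by pairs of partitions, following \cite{gukalashnikov}), the precise dictionary identifying $\sigma^{(1)}_\lambda$ with the pair-indexed class attached to $(\alpha,\beta)$; the sorting of the values $w_\lambda(r-d+1),\dots,w_\lambda(r+d)$ for $d=1$ affects exactly the three positions $r-1,r,r+1$, and unwinding how $\lambda_r$ (the last part, which governs whether the $r$th position moves) versus the rest of the column behaves is what produces the $\alpha$/$\beta$ split.

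Next I would handle the numerology. The hypothesis $\sum_i|\lambda^i| = (r+1)(l+1)$ together with $|\lambda^i| = (\#\lambda^i - 1) + (|\lambda^i| - \#\lambda^i + 1) = |\alpha^i| + |\beta^i| + 1$ (the first column contributes $\#\lambda^i$ boxes, of which $\#\lambda^i-1$ go to $\alpha^i$ and one is dropped) gives $\sum_i|\alpha^i| + \sum_i|\beta^i| = (r+1)(l+1) - 4$. I would then check that $\sum_i|\alpha^i| = \sum_i(\#\lambda^i-1) \le 2(r+1)-4 = 2((r-1) - 1) \cdot\ldots$ — more carefully, $\sum_i\#\lambda^i \le 2(r+1)$ forces $\sum_i|\alpha^i| \le 2(r+1)-4 = \dim\Gr_{r-1,r+1} - \text{(something)}$; the point is that $\Gr_{r-1,r+1}$ has dimension $2(r-1)$, and the $\alpha^i$, each a single column of height $\le r-1$ lying in the $(r-1)\times 2$ rectangle, have total size exactly matching $\dim\Gr_{r-1,r+1} = 2(r-1)$ precisely when equality $\sum_i\#\lambda^i = 2(r+1)$ holds, so $I^{0,\Gr_{r-1,r+1}}_{0,\alpha^\bullet}$ is a genuine top-degree intersection number (and is $0$ by degree reasons when the strict inequality holds, giving the last sentence). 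Granting the degree bookkeeping, the residual $\sum_i|\beta^i|$ then equals $\dim\Gr_{r+1,r+l}$, so $I^{0,\Gr_{r+1,r+l}}_{0,\beta^\bullet}$ is also top-degree. I would verify that the $\beta^i$ indeed fit in the $(r+1)\times l$ rectangle (each $\beta^i = \lambda^i$ minus a column, so at most $r$ rows and width $\le l-1$, comfortably inside), cross-referencing the $(l-1)\times r$ rectangle mentioned in Definition~\ref{assoc-pair} and the ambient $\Gr_{r+1,r+l}$ via the transpose/duality $\Gr_{r+1,r+l}\cong\Gr_{l-1,r+l}$.

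The heart of the argument — and the step I expect to be the main obstacle — is the factorization of the structure constant: showing that the coefficient of the point class in $\prod_i\sigma^{(1)}_{\lambda^i}$ on $\Fl_{r-1,r+1;r+l}$, expressed in the pair-of-partitions basis, is simply $c^{(2^{r-1})}_{\alpha^\bullet}\cdot c^{((l-1)^{r+1})}_{\beta^\bullet}$ (equivalently the product of the two claimed $I^0$'s). This is where the column condition is essential: in general the pair-basis multiplication on a two-step flag variety mixes the two partition-slots, but when the total $\alpha$-content is forced to its maximal value $2(r-1)$, I expect the mixing terms to vanish and the product to split as a tensor product of the $\Gr_{r-1,r+1}$ and $\Gr_{r+1,r+l}$ cohomology multiplications — this is presumably exactly the content that Proposition~\ref{pro:comparison} and Proposition~\ref{Geometric} are set up to deliver, and I would invoke the Horn-type boundary factorization of Littlewood--Richardson coefficients recorded in Lemma~\ref{lem:LR} (in \S\ref{a:LR}) to justify the vanishing of the off-diagonal contributions. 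Once the factorization of the structure constant is in hand, combining it with Proposition~\ref{prop:lines} yields \eqref{propeq}, and the strict-inequality case follows since then $\sum_i|\alpha^i| < 2(r-1)$ makes $I^{0,\Gr_{r-1,r+1}}_{0,\alpha^\bullet}$ vanish for dimension reasons.
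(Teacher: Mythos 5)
Your proposal follows essentially the same route as the paper's proof: use Proposition~\ref{prop:lines} with $n=4$ to write the GW divisor as $\int_{\Fl_{r-1,r+1;r+l}}\prod_{i}\sigma^{(1)}_{\lambda^i}$, identify $\sigma^{(1)}_{\lambda^i}$ with the pair-indexed class for $(\alpha^i,\beta^i)$ of Definition~\ref{assoc-pair}, check the numerology ($\sum_i|\alpha^i|\le 2(r-1)=\dim\Gr_{r-1,r+1}$, with the residual $\beta$-degree matching $\dim\Gr_{r+1,r+l}$), and then invoke the factorization of Proposition~\ref{pro:comparison} together with \eqref{eq:classical}, the strict-inequality case following by dimension reasons exactly as in the paper. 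The only minor misattribution is that the splitting on the two-step flag variety is delivered by Proposition~\ref{pro:comparison} alone (whose proof uses the pair basis of \cite{gukalashnikov} and Lemma~\ref{lem:comparison}); Proposition~\ref{Geometric} and Lemma~\ref{lem:LR} play no role on this side, but this does not affect correctness.
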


\begin{proof} Since $ |\alpha^i| = \#\lambda^i -1$, we have $\sum_i |\alpha^i| \leq 2(r-1)$ and $\sum_i |\beta^i| = (r+1)(l-1)$. The result \eqref{propeq} now follows from Proposition~\ref{pro:comparison}, Proposition~\ref{prop:lines} with $n = 4$, and \eqref{eq:classical}. If $\sum_i \#\lambda^i < 2(r+1)$, then $\sum_i |\alpha^i| < 2(r-1)$, so $I^{0,\Gr_{r-1,r+1}}_{0,{\alpha^1,\dots,\alpha^4}} = 0$ and hence, $I^{1,\Gr_{r,r+l}}_{1,{\lambda^1,\dots,\lambda^4}} = 0$ as well.
\end{proof}

\subsection{An analogous identity on the critical level CB-bundle side}\label{VerlindeSide}
The aim of this section is to establish an identity for critical level CB-bundles satisfying the column condition (Definition~\ref{ColumnDef}). We consider four-tuples $\{\lambda^i\}$ of partitions. We suppose $\overline{\alpha}^i$ is a column of the partition $\lambda^i$, so for each $i$, there is a partition $\beta^i$ that lives in an $r\times (l-1)$ box, and for which $\lambda^i=\overline{\alpha}^i +\beta^i$.

 Witten's dictionary is used to calculate the ranks of the vector bundles of coinvariants in type $A$ in terms of quantum cohomology.  By translating this 
 to a classical calculation via rim-hook removals (using Lemma~\ref{mainlemma}), we prove the following.
 
\begin{lemma}\label{newwitten}
Given  a collection  $\lambda^\bullet$ of $n$ partitions, each contained in an $r\times l$ rectangle that satisfies
$\sum_i |\lambda^i| =(r+1)(l+1)$, if  $\sum_i \#\lambda^i =(r+1)(2)$, then  the rank of $\mathbb{V}(\sL_{r+1}, \lambda^\bullet,l)$ 
on $\M_{0,n}$ is equal to a classical generalized Littlewood-Richardson coefficient:
 \[{\rm{Rk}} (\mathbb{V}(\sL_{r+1}, \lambda^{\bullet}, l)) = c_{\lambda^\bullet}^{(l^{r+1},1^{r+1})}.\]
 \end{lemma}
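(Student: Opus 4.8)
The plan is to apply Witten's Dictionary (Theorem~\ref{WD}) in the regime $s \ge 0$, and then convert the resulting quantum-cohomology computation into a purely classical Littlewood--Richardson coefficient by the rim-hook removal procedure of Lemma~\ref{mainlemma}. Write $N = \sum_i \#\lambda^i = 2(r+1)$. Since $\sum_i |\lambda^i| = (r+1)(l+1) = (r+1)(l-1) + 2(r+1) = (r+1)\big((l-1) + 2\big)$, we are in case (2) of Theorem~\ref{WD} with the rectangle $(l-1)^{r+1}$ replaced by... more precisely, set $r' = r+1$ and write $(r+1)(l+1) = (r+1)(l' + s)$ where we take $l' = l-1$ and $s = 2$. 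Then Theorem~\ref{WD}(2) gives that ${\rm Rk}(\mathbb{V}(\sL_{r+1},\lambda^\bullet,l))$ equals $c^{s,((l-1)^{r+1})}_{\lambda^\bullet,(l-1)^{s}}$, i.e.\ the coefficient of $q^{2}\sigma_{((l-1)^{r+1})}$ in the quantum product $\sigma_{\lambda^1} * \cdots * \sigma_{\lambda^n} * \sigma_{(l-1)}^{2}$ in $\QH^*\Gr_{r+1,r+l}$. (I would double-check the exact form of the secondary partitions $\sigma_{(l)}^s$ versus $\sigma_{(l-1)}^s$ against the statement of Theorem~\ref{WD}; the point is only that one lands in $\QH^*\Gr_{r+1, r+1 + (l-1)} = \QH^*\Gr_{r+1, r+l}$ at $q$-degree $s$.)

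Next I would compute that quantum product by Lemma~\ref{mainlemma}: first take the classical product $\prod_i \sigma_{\lambda^i}$ in the ring of symmetric functions, expand in Schur functions $\sum_\mu c_{\lambda^\bullet}^\mu s_\mu$, and then for each $\mu$ with $|\mu| = (r+1)(l+1)$ either discard it (if removing $m$-rim-hooks, $m = r+l = r+1 + (l-1)$, ever produces an illegal configuration or a nonzero $(k+1)$st part without an available rim-hook) or reduce it. The key observation, and the reason the column hypothesis is exactly right, is a \emph{column bound}: by the subadditivity of column heights under Littlewood--Richardson products (the fact \eqref{prod_of_short} used in Proposition~\ref{prop:ReductionColumnId}), every $\mu$ appearing in $\prod_i \sigma_{\lambda^i}$ has $\#\mu \le \sum_i \#\lambda^i = 2(r+1)$. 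So $\mu$ fits in a $2(r+1)$-row shape; combined with $|\mu| = (r+1)(l+1)$ this forces $\mu$ to be quite constrained. I expect that after removing exactly two $(r+l)$-rim-hooks — accounting for the $q^2$ — the only shapes that contribute to the coefficient of $q^2 \sigma_{((l-1)^{r+1})}$ are those $\mu$ of the form $((l^{r+1}), 1^{r+1})$ (an $(r+1)$-row block of width $l$ stacked on an $(r+1)$-row single column), and that for such $\mu$ the two rim-hook removals each contribute sign $+1$ and land precisely on $((l-1)^{r+1})$. That would give ${\rm Rk} = c_{\lambda^\bullet}^{(l^{r+1},1^{r+1})}$, as claimed.

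The main obstacle is the rim-hook bookkeeping: I need to show (a) that no $\mu$ in the classical product with $\#\mu \le 2(r+1)$ and $|\mu| = (r+1)(l+1)$ contributes an illegal rim-hook or gets killed by the $\lambda_{k+1} > 0$ clause of Lemma~\ref{mainlemma} \emph{except} in a way that still leaves room for exactly the target, (b) that the only $\mu$ yielding $q^2\sigma_{((l-1)^{r+1})}$ after two legal removals of $(r+l)$-rim-hooks is $\mu = (l^{r+1}, 1^{r+1})$, and (c) that the sign $(-1)^{w + m - k}$ works out to $+1$ for each of the two removals (here $k = r+1$, $m = r+l$, and $w$ is the width of the relevant rim-hook, which for the removal $(l^{r+1},1^{r+1}) \rightsquigarrow$ intermediate $\rightsquigarrow ((l-1)^{r+1})$ I expect to be $w = l$ and $w = l-1$ — or the complementary split — with the parities arranged so the product of signs is $1$). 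I would organize (a)–(b) by a direct combinatorial argument on the shape of $\mu$: since $\mu$ has at most $2(r+1)$ rows and area $(r+1)(l+1)$, write $\mu$ with its first $r+1$ rows and last (at most) $r+1$ rows; the two $(r+l)$-rim-hooks that must be stripped to reach the $(r+1)\times(l-1)$ rectangle are essentially forced to be the ``L-shaped'' hooks occupying the extra column and the last row, which pins down $\mu$. For the sign computation (c), I would invoke the standard fact (as in \cite{ciocanpaper}) that stripping an $m$-rim-hook from a shape to reach an $r'\times l'$ rectangle with the required total degree produces sign $(-1)^{(r'-1)}$ per hook or similar, and check the two contributions cancel against each other / combine to $+1$; if a stray sign survives, it would have to be absorbed by noting that the relevant $c_{\lambda^\bullet}^\mu$ vanish unless the signs align, but I do not expect that to be necessary. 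Throughout, the hypothesis $\sum_i \#\lambda^i = 2(r+1)$ (equality, not just $\le$) is what makes the rim-hook structure rigid enough to pin the answer to a single classical coefficient.
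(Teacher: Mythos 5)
Your overall strategy (Witten's Dictionary followed by rim-hook reduction to a classical coefficient, using the bound $\#\mu\le\sum_i\#\lambda^i$) is the same as the paper's, but there is a genuine error at the first step. In Theorem~\ref{WD} the level $l$ of the bundle $\mathbb{V}(\sL_{r+1},\lambda^\bullet,l)$ is part of the data and cannot be re-chosen: since $\sum_i|\lambda^i|=(r+1)(l+1)=(r+1)(l+s)$ forces $s=1$, case (2) says the rank is the coefficient of $q^{1}\sigma_{(l^{r+1})}$ in $\sigma_{\lambda^1}*\cdots*\sigma_{\lambda^n}*\sigma_{(l)}$ computed in $\QH^*\Gr_{r+1,r+1+l}$, i.e.\ the quantum coefficient $c^{1,(l^{r+1})}_{\lambda^\bullet,(l)}$. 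Your re-parametrization $l'=l-1$, $s=2$ would instead compute the rank of the level-$(l-1)$ bundle $\mathbb{V}(\sL_{r+1},\lambda^\bullet,l-1)$ in $\QH^*\Gr_{r+1,r+l}$ (where the $\lambda^i$ need not even fit in an $r\times(l-1)$ rectangle), which is a different number in general; ranks of conformal blocks bundles depend on the level. This also shows up in your bookkeeping: with rim-hooks of length $r+l$ and target $((l-1)^{r+1})$, the shape $(l^{r+1},1^{r+1})$ you single out does not have the right size unless the extra Pieri factors are folded in, which your sketch glosses over.

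The second issue is that the heart of the argument — identifying exactly which shapes survive rim-hook removal and checking the sign — is precisely what you defer ("I expect", "the main obstacle"), and it is where the hypothesis $\sum_i\#\lambda^i=2(r+1)$ does its work. The paper isolates this as Lemma~\ref{lem:quantum-classical}: any partition of width at most $l$ admitting a rim-hook of length $l+r+1$ whose removal yields $(l^{r+1})$ must have at least $2(r+1)+1$ rows, while any partition occurring in the classical product $\prod_i\sigma_{\lambda^i}\cdot\sigma_{(l)}$ has at most $\sum_i\#\lambda^i+1=2(r+1)+1$ rows; equality pins down the unique contributing shape $\gamma'=(l^{r+2},1^{r+1})$, a single rim-hook removal (so $q^{1}$, matching $s=1$) with positive sign, and then $c^{\gamma'}_{\lambda^\bullet,(l)}=c^{(l^{r+1},1^{r+1})}_{\lambda^\bullet}$. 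To repair your proposal you would need to redo the dictionary step with $s=1$ and then actually carry out this uniqueness-and-sign argument rather than asserting it.
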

 \begin{proof}
By the formulation of Theorem~\ref{WD}  in \S\ref{WittenD}, the rank $R$
  is equal to the degree $s=1$ generalized quantum Littlewood-Richardson coefficient $c_{\lambda^\bullet,(l)}^{1,(l^{r+1})}$ on $\QH^*\Gr_{r+1,r+1+l}$. The result follows immediately from Lemma~\ref{lem:quantum-classical}.
\end{proof}

The following identity is analogous to Proposition~\ref{pro:identity}.

\begin{proposition}[Critical level divisor identity]\label{V:identity} Let $\lambda^\bullet=(\lambda^1,\dots,\lambda^n)$ be a collection of partitions inside an $r\times l$ rectangle. Suppose
$\sum_i |\lambda^i| =(r+1)(l+1)$  and $\sum_i \#\lambda^i =(r+1)(2).$
Then 
\[c_1(\mathbb{V}(\sL_{r+1},\lambda^{\bullet}, l)) = c_1(\mathbb{V}(\sL_{r+1}, \overline{\alpha}^{\bullet}, 1)){\rm{Rk}} \mathbb{V}(\sL_{r+1}, \beta^{\bullet},(l-1)),\]
where for each $1\leq i\leq n$,  $\lambda^i=\overline{\alpha}^i + \beta^i$, where $\overline{\alpha}^i$ is the first column of $\lambda^i$. 
\end{proposition}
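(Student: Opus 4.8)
\emph{Overall strategy.} The plan is to first rewrite the right-hand side so that the statement becomes an equality of determinant line bundles, and then to establish that equality by means of a natural comparison morphism of CB-bundles, directly for all $n$. Throughout write $k_i=\#\lambda^i$, so $\overline\alpha^i=(1^{k_i})$ with $|\overline\alpha^i|=k_i$, and $\beta^i$ is $\lambda^i$ with its first column removed, so $\lambda^i=\overline\alpha^i+\beta^i$ componentwise.

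\emph{Reducing the right-hand side.} Since $\sum_i|\overline\alpha^i|=\sum_i k_i=2(r+1)=(r+1)(1+1)$, the bundle $\mathbb{V}(\sL_{r+1},\overline\alpha^\bullet,1)$ is at the critical level; computing in $\QH^*\Gr_{r+1,r+2}\cong\QH^*\mathbb{P}^{r+1}$ via Theorem~\ref{WD}(2) — where $\sigma_{\overline\alpha^i}$ is the hyperplane power $h^{k_i}$ and $h^{k_1}\cdots h^{k_n}\cdot h=h^{2r+3}=q\,h^{r+1}$ — gives $\rk\mathbb{V}(\sL_{r+1},\overline\alpha^\bullet,1)=1$. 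On the other hand $\sum_i|\beta^i|=(r+1)(l+1)-2(r+1)=(r+1)(l-1)$, so the critical level of $\beta^\bullet$ is $l-2$ and $\mathbb{V}(\sL_{r+1},\beta^\bullet,l-1)$ is strictly above the critical level; hence $c_1(\mathbb{V}(\sL_{r+1},\beta^\bullet,l-1))=0$ by \cite[Prop.~1.3]{BGMA}. Combining these two facts with the Leibniz rule for $c_1$ of a tensor product,
\[
c_1\big(\mathbb{V}(\sL_{r+1},\beta^\bullet,l-1)\otimes\mathbb{V}(\sL_{r+1},\overline\alpha^\bullet,1)\big)=\rk\mathbb{V}(\sL_{r+1},\beta^\bullet,l-1)\cdot c_1(\mathbb{V}(\sL_{r+1},\overline\alpha^\bullet,1)),
\]
which is precisely the right-hand side of the Proposition. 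So it suffices to prove $c_1(\mathbb{V}(\sL_{r+1},\lambda^\bullet,l))=c_1\big(\mathbb{V}(\sL_{r+1},\beta^\bullet,l-1)\otimes\mathbb{V}(\sL_{r+1},\overline\alpha^\bullet,1)\big)$.

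\emph{The comparison morphism.} The diagonal embedding $\sL_{r+1}\hookrightarrow\sL_{r+1}\oplus\sL_{r+1}$ adds levels, $l=(l-1)+1$, so functoriality of coinvariants under restriction along it gives a natural surjection $\mathbb{V}_{\sL_{r+1}\text{-diag}}(\{\mathcal{H}_{\beta^i}\otimes\mathcal{H}_{\overline\alpha^i}\},l)\twoheadrightarrow\mathbb{V}(\sL_{r+1},\beta^\bullet,l-1)\otimes\mathbb{V}(\sL_{r+1},\overline\alpha^\bullet,1)$; and since $\lambda^i=\overline\alpha^i+\beta^i$ the integrable level-$l$ module $\mathcal{H}_{\lambda^i}$ is the Cartan-component direct summand of $\mathcal{H}_{\beta^i}\otimes\mathcal{H}_{\overline\alpha^i}$, which splits off a copy of $\mathbb{V}(\sL_{r+1},\lambda^\bullet,l)$ inside $\mathbb{V}_{\sL_{r+1}\text{-diag}}(\{\mathcal{H}_{\beta^i}\otimes\mathcal{H}_{\overline\alpha^i}\},l)$. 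Composing yields a morphism of vector bundles on $\M_{0,n}$, compatible with factorization along the boundary,
\[
\Phi\colon\ \mathbb{V}(\sL_{r+1},\lambda^\bullet,l)\ \longrightarrow\ \mathbb{V}(\sL_{r+1},\beta^\bullet,l-1)\otimes\mathbb{V}(\sL_{r+1},\overline\alpha^\bullet,1).
\]
These two bundles have the same rank: by Lemma~\ref{newwitten} the source has rank $c_{\lambda^\bullet}^{(l^{r+1},1^{r+1})}$, while the target has rank $c_{\beta^\bullet}^{((l-1)^{r+1})}\cdot 1$ by Theorem~\ref{WD}(1) with $s=0$, and the hypothesis $\sum_i\#\lambda^i=2(r+1)$ forces $c_{\lambda^\bullet}^{(l^{r+1},1^{r+1})}=c_{\beta^\bullet}^{((l-1)^{r+1})}$ by the first-column-removal instance of the Horn-boundary factorization Lemma~\ref{lem:LR} — the height-$2(r+1)$ first column of the target partition is exactly packed by the first columns of the $\lambda^i$.

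\emph{Concluding, and the obstacle.} It remains to show $\Phi$ is an isomorphism outside a locus of codimension $\ge2$: then $\det\Phi$ is a section of the line bundle $\mathrm{Hom}(\det(\text{source}),\det(\text{target}))$ on the smooth variety $\M_{0,n}$ whose vanishing locus has codimension $\ge2$, hence is empty, whence $\det(\text{source})\cong\det(\text{target})$ and the first Chern classes agree. Using compatibility with boundary factorization and induction on $n$, this reduces to checking that $\Phi$ is an isomorphism of vector spaces over $\M_{0,3}=\mathrm{pt}$ for every triple satisfying the relevant column bound, and over a generic smooth $n$-pointed rational curve; both are again rank comparisons of CB-bundles, computed as Gromov–Witten invariants / quantum Littlewood–Richardson numbers via Theorem~\ref{WD} and reduced to classical coefficients by the rim-hook calculus of Lemma~\ref{mainlemma} (exactly as in the proof of Lemma~\ref{newwitten}) together with Lemma~\ref{lem:LR}. \textbf{The hard part is precisely this last step} — confirming that $\Phi$ is generically an isomorphism, i.e.\ that these rank identities hold and that the canonical map attached to the Cartan components realizes them on the nose (equivalently, producing a comparison map in the opposite direction so that $c_1(\text{source})-c_1(\text{target})$ is effective both ways and hence zero). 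If one only wants the equality of degrees, an alternative is to reduce to $n=4$ via the $F$-curve formula \eqref{CLEP} and evaluate both sides by Fakhruddin's formula on $\M_{0,4}$, at the cost of matching the conformal-weight contributions directly.
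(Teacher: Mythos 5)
Your first two reductions are sound and in fact coincide with the paper's ingredients: the rank identity $\operatorname{Rk}\mathbb{V}(\sL_{r+1},\lambda^\bullet,l)=\operatorname{Rk}\mathbb{V}(\sL_{r+1},\beta^\bullet,l-1)$ via Lemma~\ref{newwitten}, Theorem~\ref{WD} and Lemma~\ref{lem:LR}, and the vanishing $c_1(\mathbb{V}(\sL_{r+1},\beta^\bullet,l-1))=0$ because that bundle is strictly above its critical level \cite[Prop.~1.3]{BGMA} (your computation that $\operatorname{Rk}\mathbb{V}(\sL_{r+1},\overline\alpha^\bullet,1)=1$ is correct but not needed). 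The genuine gap is the step you yourself flag as ``the hard part'': you reduce the proposition to $c_1(\mathbb{V}(\sL_{r+1},\lambda^\bullet,l))=c_1\bigl(\mathbb{V}(\sL_{r+1},\beta^\bullet,l-1)\otimes\mathbb{V}(\sL_{r+1},\overline\alpha^\bullet,1)\bigr)$ and propose to prove it by showing a Cartan-component comparison map $\Phi$ is an isomorphism off codimension two, but you never establish this, and the outline you give would not suffice: equal ranks plus generic injectivity only make the degeneracy locus of $\det\Phi$ a proper closed subset, which can perfectly well be a divisor (and not necessarily a boundary divisor), so checking fibers over $\M_{0,3}$ and over generic curves together with factorization does not rule out a divisorial drop of rank. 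One would also need to verify that the natural map on coinvariants induced by $\mathcal{H}_{\lambda^i}\subset\mathcal{H}_{\beta^i}\otimes\mathcal{H}_{\overline\alpha^i}$ is even generically nonzero, which is not automatic.

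The paper avoids all of this by quoting an existing additive identity for first Chern classes, \cite[Proposition~19]{BGMB}: once the rank equality $\operatorname{Rk}\mathbb{V}(\sL_{r+1},\lambda^\bullet,l)=\operatorname{Rk}\mathbb{V}(\sL_{r+1},\beta^\bullet,l-1)$ is in place, that proposition gives
\[
c_1(\mathbb{V}(\sL_{r+1},\lambda^\bullet,l))=c_1(\mathbb{V}(\sL_{r+1},\overline\alpha^\bullet,1))\operatorname{Rk}\mathbb{V}(\sL_{r+1},\beta^\bullet,l-1)+c_1(\mathbb{V}(\sL_{r+1},\beta^\bullet,l-1))\operatorname{Rk}\mathbb{V}(\sL_{r+1},\overline\alpha^\bullet,1),
\]
and the above-critical-level vanishing kills the second summand. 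Your tensor-product/determinant argument is essentially an attempt to re-prove that cited proposition geometrically; without completing it (or substituting a direct degree computation on $F$-curves, as you mention in passing), the proof is incomplete.
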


\begin{proof} 
The first step in the proof is to show that 
\begin{equation} \label{rkid}
{\rm{Rk}}(\mathbb{V}(\sL_{r+1}, \lambda^{\bullet}, l))={\rm{Rk}}(\mathbb{V}(\sL_{r+1}, \beta^{\bullet}, (l-1))).
\end{equation}
Since $|\overline{\alpha}^i|=\#\lambda^i$, we have $\sum_i |\beta^i| = (r+1)(l-1)$, and so by Theorem~\ref{WD}, the right hand side is equal to the generalized Littlewood-Richardson coefficient $c_{\beta^\bullet}^{(l-1)^{r+1}}$. Applying Lemma~\ref{newwitten} to the left hand side, it suffices to show that
\[c^{(l^{r+1},1^{r+1)}}_{\lambda^{\bullet}}=c^{(l-1)^{r+1}}_{\beta^{\bullet}}.\]
Applying Lemma~\ref{lem:LR} to  $\lambda^\bullet$ gives this equality.

Having established the rank equality \eqref{rkid},  by \cite[Proposition~19]{BGMB},
\begin{multline}\label{Decomp}
c_1(\mathbb{V}(\sL_{r+1}, \lambda^{\bullet}, l))=c_1(\mathbb{V}(\sL_{r+1}, \overline{\alpha}^{\bullet}, 1)){\rm{Rk}}(\mathbb{V}(\sL_{r+1}, \beta^{\bullet}, (l-1)))\\
+
c_1(\mathbb{V}(\sL_{r+1}, \beta^{\bullet}, (l-1))){\rm{Rk}}( \mathbb{V}(\sL_{r+1}, \overline{\alpha}^{\bullet}, 1)).\end{multline}
We will show the second line of \eqref{Decomp} is zero.
Since $\sum_{i=1}^n|\beta^i|=(r+1)(l-1)$, recalling the definition 
from \S\ref{CL}, the critical level of  the pair $(\sL_{r+1}, \beta^{\bullet})$ is 
\[c(\sL_{r+1}, \beta^{\bullet})=(l-1)-1=l-2,\] and so 
as the level of $\mathbb{V}(\sL_{r+1}, \beta^{\bullet}, (l-1))$ is $l-1>l-2$,   by \cite[Thm~1.3]{BGMA}, we conclude that $c_1(\mathbb{V}(\sL_{r+1}, \beta^{\bullet}, (l-1)))=0$. 
In particular, \eqref{Decomp} becomes
\[c_1(\mathbb{V}(\sL_{r+1}, \lambda^{\bullet}, l))
=c_1(\mathbb{V}(\sL_{r+1}, \overline{\alpha}^{\bullet}, 1)){\rm{Rk}}(\mathbb{V}(\sL_{r+1}, \beta^{\bullet}, (l-1))).\]
The proposition follows. 
\end{proof}

\subsection{Proof of Proposition~\ref{pro:conjspecialcase} and Theorem~\ref{GWCB}}\label{Proof:pro:conjspecialcase}
\begin{proof}
By Proposition~\ref{prop:ReductionColumnId}, Proposition~\ref{pro:conjspecialcase} implies Theorem~\ref{GWCB}.
Thanks to the two identities (Propositions \ref{pro:identity} and \ref{V:identity}), to prove Proposition~\ref{pro:conjspecialcase}, it suffices to show that
\[{\rm{deg}}(\mathbb{V}(\sL_{r+1}, \overline{\alpha}^{\bullet}, 1)){\rm{Rk}} \mathbb{V}(\sL_{r+1}, \beta^{\bullet}, l-1)= I^{0,\Gr_{r-1,r+1}}_{0,{\alpha^1,\dots,\alpha^4}}I^{0,\Gr_{r+1,r+l}}_{0,{\beta^1,\dots,\beta^4}}.\]
We start on the CB side. From the proof of Proposition~\ref{V:identity}, it follows that
\[{\rm{Rk}}( \mathbb{V}(\sL_{r+1}, \beta^{\bullet}, l-1))=c^{(l-1)^{r+1}}_{\beta^{\bullet}}=I^{0,\Gr_{r+1,r+l}}_{0,{\beta^1,\dots,\beta^4}}.\]
By the (known) $l=1$ case of the conjecture, 
\[c_1(\mathbb{V}(\sL_{r+1}, \overline{\alpha}^{\bullet}, 1))=I^{1,\Gr_{r,r+1}}_{1,\overline{\alpha^1},\dots,\overline{\alpha^4}}.\]
Finally, we apply Proposition~\ref{pro:identity} to the $\overline{\alpha^i}$ to see that
\[I^{1,\Gr_{r,r+1}}_{1,\overline{\alpha^1},\dots,\overline{\alpha^4}}=I^{0,\Gr_{r-1,r+1}}_{0,{\alpha^1,\dots,\alpha^4}}.\]
The theorem now follows. 
\end{proof}

\subsection{Corollaries}
We expect that the propositions above will allow us to see unexpected behavior on both the critical level CB and GW sides. For example, the following proposition is surprising from the perspective of conformal blocks (see Remark~\ref{WhySurprising}). 
\begin{lemma}\label{lem:addrowV} Let $(\lambda^1,\ldots, \lambda^4)$ be partitions for $\Gr_{r,r+l}$, with $\# \lambda^1 \geq \cdots \geq \# \lambda^4$ and $\sum_{i}|\lambda^i|=(r+1)(l+1)$. 
Let $\mu^1$ be obtained from $\lambda^1$ by adding a maximal row, let $\mu^2$ be obtained from $\lambda^2$ by adding a single box at the end of the first column, and let $\mu^3=\lambda^3$, and $\mu^4=\lambda^4$. Then 
\[c_1(\mathbb{V}(\sL_{r+1}, \lambda^{\bullet}, l))=c_1(\mathbb{V}(\sL_{r+2}, \mu^{\bullet}, l)).\]
\end{lemma}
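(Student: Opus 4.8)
The plan is to reduce this to the GW side via the GW $\equiv$ CB Conjecture in the cases where we already know it, then verify the corresponding identity of Gromov–Witten divisors by a direct Schubert-calculus computation. First I would check that both collections $\lambda^\bullet$ and $\mu^\bullet$ are at the critical level: by hypothesis $\sum_i|\lambda^i|=(r+1)(l+1)$, and since $\mu^1$ adds a maximal row (of length $l$) to $\lambda^1$ while $\mu^2$ adds a single box, we get $\sum_i|\mu^i|=(r+1)(l+1)+l+1=(r+2)(l+1)$, so $\mathbb{V}(\sL_{r+2},\mu^\bullet,l)$ is indeed critical level. The natural idea is to interpret $c_1$ on $\M_{0,4}\cong\mathbb{P}^1$ as a degree and compute it as an intersection of $F$-curves, but since $F$-curves on $\M_{0,4}$ are all equivalent, it suffices to compute a single degree on each side. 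I would instead try to pass directly through Witten's Dictionary: by \cite[Prop~1.6]{BGMA} (i.e.\ \eqref{CSym}) and the row/column symmetry, reduce to showing that suitable quantum Littlewood–Richardson coefficients agree, then show that adding a maximal row in $\Gr_{r+1,r+1+l}$ and simultaneously shifting to $\Gr_{r+2,r+2+l}$ with the one-box modification of $\mu^2$ preserves the relevant structure constants.

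Concretely, the cleanest route is probably this. Use Theorem~\ref{thm:ConjReduction} to reduce to $n=4$. On $\M_{0,4}$, express $c_1(\mathbb{V}(\sL_{r+1},\lambda^\bullet,l))$ via the $F$-curve formula \eqref{CLEP} for the (unique up to relabeling) partition of $\{1,2,3,4\}$ into singletons; this writes the degree as a sum over $4$-tuples $\nu^\bullet$ of a product $c_1(\mathbb{V}(\sL_{r+1},\nu^\bullet,l))\cdot\prod_j\mathrm{Rk}(\cdots)$, but on $\M_{0,4}$ the only contributing $\nu^\bullet$ are those with $c_1(\mathbb{V}(\sL_{r+1},\nu^\bullet,l))$ itself a degree on a point, so this degenerates to a statement purely about ranks, i.e.\ about quantum Littlewood–Richardson coefficients on a Grassmannian. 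Then I would set up the analogous reduction for $\mu^\bullet$ and $\sL_{r+2}$, and match the two resulting expressions box-by-box using the Main Lemma (Lemma~\ref{mainlemma}): the claim is that the rim-hook removal computation in $\QH^*\Gr_{r+2,r+2+l}$ for $\mu^\bullet$ reproduces, term for term, the one in $\QH^*\Gr_{r+1,r+1+l}$ for $\lambda^\bullet$, because adding a maximal row to $\lambda^1$ and a single box to $\lambda^2$ (and passing from $r+1$ to $r+2$) is exactly a ``stabilization'' move that does not change the combinatorics of which rim-hooks can be removed or the signs $(-1)^{w+m-k}$ incurred.

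Alternatively—and this may be the version actually intended—one could argue geometrically on the GW side: using the known $l=1$ and column-condition cases one translates to Gromov–Witten divisors $I^{1,\Gr_{r,r+l}}_{1,\lambda^\bullet}$ and $I^{1,\Gr_{r+1,r+1+l}}_{1,\mu^\bullet}$, and then applies Proposition~\ref{prop:lines} to rewrite each as an intersection number of classes $\sigma^{(1)}$ on a two-step flag variety $\Fl_{r-1,r+1;r+l}$ resp.\ $\Fl_{r,r+2;r+1+l}$; the associated-pair description (Definition~\ref{assoc-pair}) shows that the pair attached to $\mu^1$ is $(\alpha^1,\beta^1+\text{row})$ or $(\alpha^1,\beta^1)$ with the ambient rectangle enlarged, and similarly the pair for $\mu^2$ is $(\alpha^2+\text{box},\beta^2)$, so one reduces to a stability statement for Littlewood–Richardson coefficients of the two factor Grassmannians. \textbf{The main obstacle} I anticipate is precisely this stability/compatibility check: verifying that the one-box modification of $\lambda^2$ is the \emph{correct} bookkeeping move that compensates for adding a full row to $\lambda^1$ when the rank $r$ increases by one—i.e.\ that the Littlewood–Richardson (or quantum LR) structure constants are genuinely equal and not merely equal up to a combinatorial factor. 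I would expect to need Lemma~\ref{lem:LR} (the factorization identity for LR coefficients on the boundary of the Horn cone) together with Lemma~\ref{newwitten}-style rim-hook bookkeeping to nail this down, and I would first test the identity on small cases (e.g.\ $r=1$, small $l$, using ConfBlocks) to fix the precise normalization before attempting the general argument.
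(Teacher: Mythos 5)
Your final (``alternatively'') paragraph is essentially the paper's route: one checks that $\sum_i|\mu^i|=(r+2)(l+1)$ and that $\mu^\bullet$ still satisfies the column condition, applies Proposition~\ref{pro:conjspecialcase} to both collections to reduce the claim to $I^{1,\Gr_{r,r+l}}_{1,\lambda^\bullet}=I^{1,\Gr_{r+1,r+1+l}}_{1,\mu^\bullet}$, and then uses Proposition~\ref{pro:identity} to factor each side as a product of classical intersection numbers, $I^{0,\Gr_{r-1,r+1}}_{0,\alpha^\bullet}\,I^{0,\Gr_{r+1,r+l}}_{0,\beta^\bullet}$ versus $I^{0,\Gr_{r,r+2}}_{0,\tilde\alpha^\bullet}\,I^{0,\Gr_{r+2,r+l+1}}_{0,\tilde\beta^\bullet}$. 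But your proposal stops exactly where the proof actually happens: you flag the ``stability/compatibility check'' as the main obstacle and leave it unresolved, and that check is the content of the lemma. The $\beta$-side equality is the easy half ($\tilde\beta^1$ is $\beta^1$ with a maximal row added, $\tilde\beta^2=\beta^2$, and the relevant Littlewood--Richardson number is unchanged). The $\alpha$-side equality $I^{0,\Gr_{r-1,r+1}}_{0,\alpha^\bullet}=I^{0,\Gr_{r,r+2}}_{0,\tilde\alpha^\bullet}$ is \emph{not} a formal stabilization: it uses the hypothesis $\#\lambda^1\geq\cdots\geq\#\lambda^4$, which your write-up never invokes. Because $\alpha^1,\alpha^2$ are the two longest columns, $|\alpha^1|+|\alpha^2|\geq r-1$, hence $|\tilde\alpha^1|+|\tilde\alpha^2|\geq r+1$, so every partition occurring in $\sigma_{\tilde\alpha^1}\sigma_{\tilde\alpha^2}$ inside the $r\times 2$ rectangle has a full-width row; stripping that row is what identifies the product in $\Gr_{r,r+2}$ with the product in $\Gr_{r-1,r+1}$. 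Without this choice the one-box move on $\lambda^2$ does not automatically compensate for the added row, so the ``correct bookkeeping'' you worry about genuinely requires this argument (or something equivalent), and your proposal does not supply it.

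The two other routes you sketch would not go through as written. On $\M_{0,4}$ the $F$-curve formula \eqref{CLEP} applied to the decomposition into singletons is circular: the constraint \eqref{CLData} together with the two-pointed rank/intersection factors forces $\nu^j=\lambda^j$, so the formula returns the degree you started with and does not ``degenerate to a statement purely about ranks.'' Likewise, the asserted term-for-term matching of rim-hook computations in $\QH^*\Gr_{r+1,r+1+l}$ and $\QH^*\Gr_{r+2,r+2+l}$ is exactly the kind of claim Remark~\ref{WhySurprising} cautions against: the naive bookkeeping already fails at the level of ranks for the transposed bundles (the paper exhibits ranks $4$ versus $5$), so equality of the degrees cannot be declared an automatic ``stabilization'' and must instead come from the column-condition factorizations described above.
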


\begin{proof}
First note that \[|\mu^1|+|\mu^2|+|\lambda^3|+|\lambda^4|=(r+1)(l+1)+l+1=(r+2)(l+1),\] as by assumption $|\lambda^1|+|\lambda^2|+|\lambda^3|+|\lambda^4|=(r+1)(l+1)$. If $\mu^i$ corresponds to the pair of partitions $(\tilde{\alpha}^i,\tilde{\beta}^i)$, and $\lambda^i$ to $(\alpha^i,\beta^i)$, then 
\[|\tilde{\alpha}^1|+|\tilde{\alpha}^2|+|{\alpha}^3|+ |{\alpha}^4|=2+|{\alpha}^1|+|{\alpha}^2|+|{\alpha}^3|+ |{\alpha}^4|=2+2(r-1)=2r.\]
This shows that the partitions $(\mu^1,\mu^2,\lambda^3,\lambda^4)$ satisfy the conditions of Proposition~\ref{pro:conjspecialcase}. It therefore suffices to show this statement on the Gromov-Witten locus side. That is, we show that 
\[I^{1,\Gr_{r,r+l}}_{1,\lambda^1,\lambda^2,\lambda^3,\lambda^4}=I^{1,\Gr_{r+1,r+1+l}}_{1,\mu^1,\mu^2,\lambda^3,\lambda^4}.\]
By Proposition~\ref{pro:identity},
\[I^{1,\Gr_{r+1,r+1+l}}_{1,\mu^1,\mu^2,\lambda^3,\lambda^4}=I^{0,\Gr_{r,r+2}}_{0,{\tilde{\alpha}^1,\tilde{\alpha}^2,\alpha^3,\alpha^4}}I^{0,\Gr_{r+2,r+l+1}}_{0,{\tilde{\beta}^1,\tilde{\beta}^2,\beta^3,\beta^4}},\]
and
\[I^{1,\Gr_{r,r+l}}_{1,{\lambda^1,\dots,\lambda^4}} = I^{0,\Gr_{r-1,r+1}}_{0,{\alpha^1,\dots,\alpha^4}}I^{0,\Gr_{r+1,r+l}}_{0,{\beta^1,\dots,\beta^4}}.\]
The lemma will follow from showing that
\[ I^{0,\Gr_{r-1,r+1}}_{0,{\alpha^1,\dots,\alpha^4}}=I^{0,\Gr_{r,r+2}}_{0,{\tilde{\alpha}^1,\tilde{\alpha}^2,\alpha^3,\alpha^4}} \hspace{2mm}  \text{ and } \hspace{2mm} I^{0,\Gr_{r+1,r+l}}_{0,{\beta^1,\dots,\beta^4}}=I^{0,\Gr_{r+2,r+l+1}}_{0,{\tilde{\beta}^1,\tilde{\beta}^2,\beta^3,\beta^4}}.\]
Notice that $\tilde{\beta}^1$ is $\beta^1$ with an extra maximal row added, while $\tilde{\beta}^2=\beta^2$. The second equality thus follows easily from Schubert calculus. For the first, note that for $i=1,2$, $\tilde{\alpha}^i$ is obtained from $\alpha^i$ by adding an extra box at the end of the column (these are both columns of length 1). Choosing $\alpha^1$ and $\alpha^2$ to be the longest of the four columns ensures that $|\alpha^1|+|\alpha^2| \geq r-1$, and hence $|\tilde{\alpha}^1|+|\tilde{\alpha}^2| \geq r+1$. Every partition $\mu$ fitting into an $r \times 2$ box with $c^{\mu}_{\tilde{\alpha}^1 \tilde{\alpha}^2} \neq 0$ has at least one maximal width row. Removing this row identifies the product $\sigma_{\alpha^1} \sigma_{\alpha^2}$ in $\Gr_{r-1,r+1}$ with that of  $\sigma_{\tilde{\alpha}^1} \sigma_{\tilde{\alpha}^2}$ in $\Gr_{r,r+2}$. The desired equality follows. 
\end{proof}

\begin{remark}\label{WhySurprising}
The bundles in  Lemma~\ref{lem:addrowV}  are at the critical level, and so by \cite[Proposition~1.6]{BGMA} the 
assertion is equivalent to the statement 
$c_1(\mathbb{V}(\sL_{l+1}, \lambda^T_{\bullet}, r))=c_1(\mathbb{V}(\sL_{l+1},\mu^T_{\bullet}, r+1)).$
At first glance, one may think that this can be shown by using the additive identity  \cite[Proposition~19]{BGMB}, 
to decompose $c_1(\mathbb{V}(\sL_{l+1},\mu^T_{\bullet},r+1))$ into a sum of the first Chern class of a level $1$ bundle for $\sL_{l+1}$ and $c_1(\mathbb{V}(\sL_{l+1},\lambda^T_{\bullet}, r))$, and hope that the level one bundle has a vanishing first Chern class.  To apply \cite[Proposition~19]{BGMB}, among other things, one needs 
${\rm{Rk}}(\mathbb{V}(\sL_{l+1}, \lambda^T_{\bullet}, r))={\rm{Rk}}(\mathbb{V}(\sL_{l+1},\mu^T_{\bullet}, r+1))$, which is not always the case.  For example, if
$\lambda^1=(3,2),
 \lambda^2=(2, 1)$, and
$\lambda^3 = \lambda^4=(2, 2),$
so $\sum_i |\lambda_i|=16$ and $r=\ell=3$, then for $\mu^{1}=(3,3,2),  \mu^2=(2, 1, 1)$, and $\mu^3=\mu^4 = (2, 2)$,
 one can compute ${\rm{Rk}}(\mathbb{V}(\sL_{4}, (\lambda^T)^{\bullet}, 3))=4$, and
 ${\rm{Rk}}(\mathbb{V}(\sL_{4},(\mu^T)^{\bullet}, 4))=5$.  From this perspective,  Lemma~\ref{lem:addrowV} is surprising.
 \end{remark}

\section{The GW \texorpdfstring{$\equiv$}{=} CB Conjecture in examples and in other cases}\label{KnownCases}
For each fixed $(r, l)$, Theorem~\ref{thm:ConjReduction} reduces the conjecture to a finite computation. Namely, we must check that for every collection of $4$ partitions of the correct sizes, the degree of the critical level CB-divisor agrees with the degree of the GW divisor. The degree of the critical level CB-divisor can be computed using the Macaulay2 package conformalBlocks.
By Proposition~\ref{prop:lines} and \eqref{eq:sigmalambda}, the degree of the GW divisor is equal to the degree of the
product $\prod_{i=1}^{4} q_* p^* \sigma_{\lambda^i}$, which is also readily computable using Macaulay2. Using this, we verified the conjecture for small values of $(r, l)$, listed below.

\begin{proposition}\label{M2}
For all collections $\lambda^\bullet$ of \(4\) partitions,
the GW divisor $I^{1, \Gr_{r, r+l}}_{1, \lambda^\bullet} $ is numerically equivalent to the corresponding critical level CB-divisor for 
\[(r, l) = (2, 2), (2, 3), (2, 4), \ldots, (2, 11), (3, 3), (3, 4).\] 
\end{proposition}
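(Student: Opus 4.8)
This is a finite computer verification enabled by the machinery already developed. The claim concerns $\M_{0,4}\cong\mathbb{P}^1$, where a base point free divisor is determined up to numerical equivalence by its degree, so two such divisors are numerically equivalent exactly when their degrees agree. Fix one of the pairs $(r,l)$ in the list. The ``admissible'' $4$-tuples $\lambda^\bullet=(\lambda^1,\dots,\lambda^4)$ to be tested are those with each $\lambda^i\subseteq (l^r)$ and $\sum_i|\lambda^i|=(r+1)(l+1)$ (the critical level condition, which is simultaneously the condition for $I^{1,\Gr_{r,r+l}}_{1,\lambda^\bullet}$ to be a divisor and for $\mathbb{V}(\sL_{r+1},\lambda^\bullet,l)$ to be at critical level); for fixed $(r,l)$ there are only finitely many of these. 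Thus it suffices to enumerate this finite set and, for each $\lambda^\bullet$, compare the two integers $\deg c_1(\mathbb{V}(\sL_{r+1},\lambda^\bullet,l))$ and $\deg I^{1,\Gr_{r,r+l}}_{1,\lambda^\bullet}$. (Together with Theorem~\ref{thm:ConjReduction}, a positive outcome then establishes GW $\equiv$ CB for that $(r,l)$ on all $\M_{0,n}$.)

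The first integer is computed with the Macaulay2 package conformalBlocks, which returns $\deg c_1(\mathbb{V}(\sL_{r+1},\lambda^\bullet,l))$ on $\M_{0,4}$ directly from the input $(\sL_{r+1},\lambda^\bullet,l)$. For the second integer I would use Proposition~\ref{prop:lines} with $n=4$: the degree of $I^{1,\Gr_{r,r+l}}_{1,\lambda^\bullet}$ equals the degree-$1$, $4$-pointed Gromov--Witten invariant $I_1(\sigma_{\lambda^1},\dots,\sigma_{\lambda^4})$, which by \eqref{int-on-Z} and \eqref{eq:sigmalambda} equals $\int_{\Fl_{r-1,r+1;r+l}}\prod_{i=1}^{4} q_*p^*\sigma_{\lambda^i}$ for the projection maps $p,q$ of the diagram in \S\ref{sec:lines}. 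This is a purely classical computation: one forms the pull-push classes $q_*p^*\sigma_{\lambda^i}\in\H^*\Fl_{r-1,r+1;r+l}$ by Schubert-class arithmetic on $\Gr_{r,r+l}$, $\Fl_{r-1,r,r+1;r+l}$, and $\Fl_{r-1,r+1;r+l}$, multiplies the four classes, and reads off the coefficient of the point class; all of this is routine in Macaulay2. Running both routines over the finite list of admissible $\lambda^\bullet$ for each $(r,l)$ in the statement, and recording equality in every instance, proves the proposition.

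The only genuine difficulty is computational scale rather than mathematics: the number of admissible $4$-tuples and the dimensions of the ambient flag varieties grow quickly with $r$ and $l$, which is why the confirmed range stops at $(r,l)=(3,4)$. Reliability rests on the two implementations being independent and each being correct; useful internal consistency checks are the transposition symmetry \eqref{BigID} (the output for $(\lambda^\bullet,r,l)$ must match that for $((\lambda^T)^\bullet,l,r)$), the vanishing of both divisors whenever $\sum_i\#\lambda^i<2(r+1)$, and --- for those $\lambda^\bullet$ satisfying the column condition --- agreement with the closed-form value furnished by Propositions~\ref{pro:identity} and~\ref{V:identity}.
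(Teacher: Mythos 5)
Your proposal matches the paper's own argument: enumerate the finitely many critical-level $4$-tuples for each listed $(r,l)$, compute the CB degree with the Macaulay2 package conformalBlocks, and compute the GW degree via Proposition~\ref{prop:lines} and \eqref{eq:sigmalambda} as the intersection number $\int_{\Fl_{r-1,r+1;r+l}}\prod_{i=1}^{4} q_*p^*\sigma_{\lambda^i}$, then check the two integers agree. This is essentially identical to the paper's verification, so no further comparison is needed.
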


Our proof of Theorem~\ref{GWCB} gives rise to a sufficient combinatorial criterion for the non-vanishing of GW/CB divisors.
\begin{proposition} \label{nonvanishing}
 The GW divisor and the CB divisor associated to $\lambda^\bullet$ are non-zero if there exists a decomposition $[n] = N_1 \cup \cdots \cup N_4$ and partitions $(\mu^1, \ldots, \mu^4)$ such that 
\begin{enumerate}
\item $\sigma_{\mu^j}$ appears with positive coefficient in $\prod_{i \in N_j} \sigma_{\lambda^i}$
\item the sum of the heights of the $\mu^j$ is equal to $2r+2$
\item the product of the $\sigma_{\beta^j}$ (where $\beta^j$ is obtained by removing the first column $\mu^j$) is non-zero in $\Gr_{r+1,r+l}$
\end{enumerate}
\end{proposition}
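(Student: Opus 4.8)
The plan is to test both divisors against the single $F$-curve $F_{N_1,N_2,N_3,N_4}$ coming from the given decomposition, and show that each intersection number is strictly positive; a nonzero intersection with a curve forces a nonzero numerical class. First I would apply the factorization formulas \eqref{GWE} and \eqref{CLEP}. In \eqref{GWE}, every summand is $I^{1,\Gr_{r,r+l}}_{1,\nu^\bullet}\prod_j I^{0,\Gr_{r,r+l}}_{0,\lambda(N_j)\cup(\nu^j)^\vee}$, a product of a degree of a base point free divisor on $\M_{0,4}\cong\mathbb{P}^1$ and generalized Littlewood--Richardson coefficients, hence non-negative; similarly every summand in \eqref{CLEP} is a product of $c_1(\mathbb{V}(\sL_{r+1},\nu^\bullet,l))$ (a degree of a globally generated divisor on $\M_{0,4}$) and ranks of CB-bundles, hence non-negative, and by the computation in the proof of Theorem~\ref{thm:ConjReduction} these rank factors equal the corresponding coefficients $c^{\nu^j}_{\lambda(N_j)}$. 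Condition (1) says exactly that $c^{\mu^j}_{\lambda(N_j)}=I^{0,\Gr_{r,r+l}}_{0,\lambda(N_j)\cup(\mu^j)^\vee}>0$ for each $j$. So, isolating the single summand indexed by $\nu^\bullet=\mu^\bullet$, it suffices to prove
\[
I^{1,\Gr_{r,r+l}}_{1,\mu^\bullet}>0 \qquad\text{and}\qquad c_1(\mathbb{V}(\sL_{r+1},\mu^\bullet,l))>0.
\]

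Next I would observe that $\mu^\bullet$ satisfies the column condition. Since $\prod_{i\in N_j}\sigma_{\lambda^i}$ is homogeneous of degree $\sum_{i\in N_j}|\lambda^i|$, condition (1) forces $|\mu^j|=\sum_{i\in N_j}|\lambda^i|$, so $\sum_j|\mu^j|=\sum_i|\lambda^i|=(r+1)(l+1)$; together with condition (2), $\sum_j\#\mu^j=2(r+1)$, this is precisely the equality case of Definition~\ref{ColumnDef}. Hence Proposition~\ref{pro:conjspecialcase} gives $I^{1,\Gr_{r,r+l}}_{1,\mu^\bullet}=c_1(\mathbb{V}(\sL_{r+1},\mu^\bullet,l))$, and Proposition~\ref{pro:identity} factors this common value as
\[
I^{1,\Gr_{r,r+l}}_{1,\mu^\bullet}=I^{0,\Gr_{r-1,r+1}}_{0,\alpha^1,\dots,\alpha^4}\cdot I^{0,\Gr_{r+1,r+l}}_{0,\beta^1,\dots,\beta^4},
\]
where $\alpha^j=(1^{\#\mu^j-1})$ and $\beta^j$ is $\mu^j$ with its first column removed. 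Condition (3) is the statement that the second factor, $\int_{\Gr_{r+1,r+l}}\prod_j\sigma_{\beta^j}$, is nonzero, hence positive. So the whole argument reduces to the positivity of the first factor.

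The one genuinely new input is that $I^{0,\Gr_{r-1,r+1}}_{0,\alpha^1,\dots,\alpha^4}=\int_{\Gr_{r-1,r+1}}\prod_j\sigma_{(1^{\#\mu^j-1})}$ is automatically positive. The dimension bookkeeping is automatic: $\sum_j(\#\mu^j-1)=2(r+1)-4=2(r-1)=\dim\Gr_{r-1,r+1}$, and each column has height $\#\mu^j-1\le r-1$. I would then use the isomorphism $\Gr_{r-1,r+1}\cong\Gr_{2,r+1}$, which transposes partitions and sends $\sigma_{(1^{\#\mu^j-1})}$ to the special Schubert class $\sigma_{(\#\mu^j-1)}$ of the same codimension, and prove the elementary lemma: in $\Gr_{2,m}$, a product $\sigma_{(a_1)}\cdots\sigma_{(a_k)}$ with $0\le a_i\le m-2$ and $\sum a_i=2(m-2)=\dim\Gr_{2,m}$ is a positive multiple of the point class. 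For this, order the $a_i$ decreasingly and build the $2\times(m-2)$ rectangle by successive horizontal strips, greedily filling the first row to capacity and letting each strip overflow into the second row; the constraint $\sum a_i=2(m-2)$ guarantees no strip is forced past the bottom of the rectangle, so the horizontal-strip condition is preserved and the point class appears with coefficient at least $1$ in the iterated Pieri expansion. This gives $I^{0,\Gr_{r-1,r+1}}_{0,\alpha^1,\dots,\alpha^4}>0$ and completes the proof.

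I do not expect a serious obstacle here: the structural work is already carried by the $F$-curve factorization formulas \eqref{GWE}, \eqref{CLEP} and by Propositions~\ref{pro:identity} and \ref{pro:conjspecialcase}, and the only computation not already in the paper is the $\Gr_{2,m}$ Pieri positivity lemma, which is routine. The point that requires a little care is the bookkeeping: verifying that the $\mu^\bullet$ produced by conditions (1)--(2) genuinely lands in the column-condition locus (so that Propositions~\ref{pro:conjspecialcase} and \ref{pro:identity} apply), and matching the two notions of dual, $(\mu^j)^\vee$ versus $(\mu^j)^*$, via the rank $=$ Littlewood--Richardson coefficient identity established inside the proof of Theorem~\ref{thm:ConjReduction}.
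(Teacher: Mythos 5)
Your proposal is correct and follows essentially the same route as the paper's proof: test both divisors against the single $F$-curve $F_{N_1,\ldots,N_4}$, use the non-negativity of all summands in \eqref{GWE} and \eqref{CLEP}, isolate the $\nu^\bullet=\mu^\bullet$ term, apply Proposition~\ref{pro:identity} together with conditions (1)--(3) to get its positivity, and transfer to the CB side via the $n=4$ case of Theorem~\ref{GWCB}. The only difference is that you spell out details the paper leaves implicit (the forced equality $|\mu^j|=\sum_{i\in N_j}|\lambda^i|$, the column-condition bookkeeping for $\mu^\bullet$, and the Pieri/greedy argument in $\Gr_{2,r+1}$ for positivity of the $\alpha$-factor, which the paper dismisses with ``Condition (2) and the Pieri rules''), and these elaborations are all sound.
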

Note that condition (2) may be satisfied even if the original collection $\lambda^\bullet$ does not satisfy the column condition.  We give an example below.  It is often hard to know if appropriate $\mu^j$ exist.  However, by working backwards we can construct many examples where it is apparent that (1) -- (3) are satisfied.
\begin{proof}
 Condition (1) implies $\prod_{j=1}^4 I_{0,\lambda(N_j)\cup(\mu^j)^\vee}^{0,\Gr_{r,r+l}}$ is positive. Condition (2) and the Piere rules imply $I_{0,\alpha^1,\ldots,\alpha^4}^{0,\Gr_{r-1,r+1}}$ is positive. Condition (3) says $I_{0,\beta^1,\ldots\beta^4}^{0,\Gr_{r+1,r+l}}$ is positive. Thus, by Proposition~\ref{pro:identity}, we see $I_{1,\mu^1,\ldots,\mu^4}^{1,\Gr_{r,r+l}}$ is positive. In particular, 
\begin{equation} \label{oneterm}
I_{1,\mu^1,\ldots,\mu^4}^{1,\Gr_{r,r+l}}\prod_{j=1}^4 I_{0,\lambda(N_j)\cup(\mu^j)^\vee}^{0,\Gr_{r,r+l}}
\end{equation}
is positive. The term \eqref{oneterm} appears as a summand in \eqref{GWE} for the calculation of $I_{1,\lambda^\bullet}^{1,\Gr_{r,r+l}}\cdot F_{N_1,\ldots,N_4}$.  Since all summands in \eqref{GWE} are non-negative, it follows that $I_{1,\lambda^\bullet}^{1,\Gr_{r,r+l}}\cdot F_{N_1,\ldots,N_4}$ is positive. Hence, $I_{1,\lambda^\bullet}^{1,\Gr_{r,r+l}}$ is non-trivial.

Similarly, by Theorem~\ref{GWCB} in the case $n=4$, the term in \eqref{oneterm} is equal to the $\nu^\bullet=\mu^\bullet$ term in \eqref{CLEP}.  This shows that the CB divisor must also intersect this $F$-curve in positive degree.
\end{proof}

Using the conditions (1) -- (3) one can construct many examples that satisfy the  column condition and give nonzero GW/CB divisors.  We now describe one such infinite family. Fix $m\in\mathbb{Z}_{>0}$ and choose $l$ and $r$ so that $l$ is odd and $r+1$ is divisible by $2m$.  Take each of $n=(2r+2)/m$ partitions $\lambda^i$ to be a rectangle with height $m$ and width $(l+1)/2$.  Note that $\lambda^\bullet$ satisfies the column condition, as
\[\sum_{i=1}^n|\lambda^i|=\frac{2r+2}{m}\cdot m\cdot\frac{l+1}{2}=(r+1)(l+1)\]
and
\[\sum_{i=1}^n\#\lambda^i=\frac{2r+2}{m}\cdot m=2r+2.\]
Divide the set $\{1,\ldots,n\}$ evenly among $N_1,N_2,N_3,N_4$. For each $j=1, \ldots, 4$, let $\mu^j$ be the partition with height $(r+1)/2$ and width $(l+1)/2$.  
Notice that the union of $(r+1)/2m$ copies of $\lambda^i$ stacked vertically is the partition $\mu^j$ (indicated by bold lines in the figure below). Hence, condition (1) is readily seen to be satisfied by the Littlewood-Richardson rules. Condition (2) is also satisfied as
\[\sum_{j=1}^4 \#\mu^j = 4 \cdot \frac{r+1}{2} = 2r+2.\]
Finally, in condition (3), each $\beta^j$ is an $(r+1)/2$ by $(l-1)/2$ rectangle. These $4$ rectangles can be placed side by side to make an $r+1$ by $l - 1$ rectangle, so applying the Littlewood-Richardson rules, we see that condition (3) is also satisfied.

Pictured below are the partitions for this example when $m = 2, r = l = 11, n = 12$.  
 
The bold lines show how $\mu^j$ is a union of copies of $\lambda^i$.
\begin{center}
\begin{tikzpicture}[scale = 0.3]
\draw (0, 0) -- (6, 0);
\draw (0, -1) -- (6, -1);
\draw (0, -2) -- (6, -2);
\draw (0, 0) -- (0, -2);
\draw (1, 0) -- (1, -2);
\draw (2, 0) -- (2, -2);
\draw (3, 0) -- (3, -2);
\draw (4, 0) -- (4, -2);
\draw (5, 0) -- (5, -2);
\draw (6, 0) -- (6, -2);
\node at (3, -3) {$\lambda^i$};
\end{tikzpicture}
\hspace{0.2in}
\begin{tikzpicture}[scale = 0.3]
\draw[very thick] (0, 0) -- (6, 0);
\draw (0, -1) -- (6, -1);
\draw[very thick] (0, -2) -- (6, -2);
\draw (0, -3) -- (6, -3);
\draw[very thick] (0, -4) -- (6, -4);
\draw (0, -5) -- (6, -5);
\draw[very thick] (0, -6) -- (6, -6);
\draw[very thick] (0, 0) -- (0, -6);
\draw (1, 0) -- (1, -6);
\draw (2, 0) -- (2, -6);
\draw (3, 0) -- (3, -6);
\draw (4, 0) -- (4, -6);
\draw (5, 0) -- (5, -6);
\draw[very thick] (6, 0) -- (6, -6);
\node at (3, -7) {$\mu^j$};
\end{tikzpicture}
\hspace{0.2in}
\begin{tikzpicture}[scale = 0.3]
\draw (0, 0) -- (5, 0);
\draw (0, -1) -- (5, -1);
\draw (0, -2) -- (5, -2);
\draw (0, -3) -- (5, -3);
\draw (0, -4) -- (5, -4);
\draw (0, -5) -- (5, -5);
\draw (0, -6) -- (5, -6);
\draw (0, 0) -- (0, -6);
\draw (1, 0) -- (1, -6);
\draw (2, 0) -- (2, -6);
\draw (3, 0) -- (3, -6);
\draw (4, 0) -- (4, -6);
\draw (5, 0) -- (5, -6);
\node at (2.5, -7) {$\beta^j$};
\end{tikzpicture}

\end{center}

In the example above, both $\mu^\bullet$ and $\lambda^\bullet$ satisfy the column condition.  Proposition~\ref{nonvanishing} can also be used to show the nonvanishing of divisors associated to $\lambda^\bullet$ not satisfying the column condition.  For instance, we can modify our example family above by ``cutting each $\lambda^i$ in half." Continuing the example with $r = l = 11$ above, we can take $n = 24$ and each $\lambda^i$ to be $(3,3)$.  Then $\sum_{i=1}^n \#\lambda^i = 24\cdot 2 = 48 > 24=2r+2$.  Nevertheless, criteria (1) -- (3) are still satisfied for $\{1,\ldots,24\}$ divided evenly among $N_1,N_2,N_3,N_4$ and each $\mu^j$ equal to $(6,6,6,6,6,6)$.

We end this section with one more family of examples, which generalizes to $\sL_{r+1}$ an example considered in \cite[\S 5]{BGMB} for $\sL_{2}$.  Take $\lambda^1 = \lambda^2 = (1), \lambda^3 = (l, 1^{r-1}), \lambda^4 = (l^r)$. Then $\sum_i |\lambda^i| = (r+1)(l+1)$.
Here is a picture when $r = 4, l = 5$:
\[\lambda^1 = \lambda^2 = {\tiny \ydiagram{1}}, \quad \lambda^3 = {\tiny \ydiagram{5,1,1,1}}, \quad \lambda^4 = {\tiny \ydiagram{5,5,5,5}}. \]
Then $\lambda^\bullet$ satisfies the column identity so Theorem~\ref{GWCB} says $I^{1\Gr_{r,r+l}}_{1, \lambda^\bullet} \equiv c_1(\mathbb{V}(\sL_{r+1}, \lambda^{\bullet}, l))$. Using Proposition~\ref{pro:identity}, one can compute directly that all divisors in this family have degree $1$.

 \appendix
 
 \section{Schubert calculus and two-step flag varieties}\label{a:LR}
 In this section, we collect some useful facts about Littlewood-Richardson coefficients and Schubert calculus. We also describe a basis of  the cohomology ring of  two-step flag varieties  in terms of pairs of partitions, following \cite{gukalashnikov}, and discuss its relation to the basis of Schubert classes.
 
 \subsection{Factorization of generalized Littlewood-Richardson coefficients}
 
We first give a technical result about generalized Littlewood-Richardson coefficients, which is a special case of the factorization of Littlewood-Richardson coefficients. Recall  the generalized Littlewood-Richardson coefficient $c^{\lambda}_{\lambda^1 \cdots \lambda^n}$ denotes the coefficient of $\lambda$ in the product of the Schur polynomials associated to the $\lambda^i$. 
\begin{lemma}\label{lem:LR} Let $(\lambda^1,\dots,\lambda^n)$ be  $n$ partitions, and suppose that $c^{\nu}_{\lambda^1 \cdots \lambda^n} >0$. Assume that $\#\nu=\sum_{i=1}^n \# \lambda^i$. Let $\hat{\nu}$ (respectively $\hat{\lambda}^i$) denote the partition obtained from $\nu$ (respectively $\lambda^i$) by removing the first column.
Then $c^{\nu}_{\lambda^1 \dots \lambda^n}=c^{\hat{\nu}}_{\hat{\lambda}^1 \dots \hat{\lambda}^n}.$
\end{lemma}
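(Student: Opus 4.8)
The plan is to interpret the generalized Littlewood--Richardson coefficient $c^{\nu}_{\lambda^1\cdots\lambda^n}$ combinatorially and track what happens to the first columns under the hypothesis $\#\nu=\sum_i\#\lambda^i$. First I would reduce to the two-factor case $n=2$ by induction: writing $s_{\lambda^1}\cdots s_{\lambda^n}=\sum_\mu c^{\mu}_{\lambda^1\cdots\lambda^{n-1}}\,s_\mu\cdot s_{\lambda^n}$ and noting that any $\mu$ contributing to a term with $\#\nu$-value $\sum_i\#\lambda^i$ must itself have $\#\mu=\sum_{i<n}\#\lambda^i$ (since $\#\mu\le\sum_{i<n}\#\lambda^i$ always, and $\#\nu\le\#\mu+\#\lambda^n$ forces equality throughout); the inductive hypothesis then handles the first $n-1$ factors, and we are left to prove $c^{\nu}_{\lambda,\mu}=c^{\hat\nu}_{\hat\lambda,\hat\mu}$ whenever $\#\nu=\#\lambda+\#\mu$.

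For the base case I would use the Littlewood--Richardson rule: $c^{\nu}_{\lambda,\mu}$ counts LR skew tableaux of shape $\nu/\lambda$ and content $\mu$ (equivalently, LR fillings in the Gelfand--Tsetlin / hive model). The key observation is that when $\#\nu = \#\lambda + \#\mu$, the first column of $\nu$ has exactly $\#\lambda+\#\mu$ boxes, so in any skew tableau of shape $\nu/\lambda$ the portion of the first column lying in $\nu/\lambda$ consists of the bottom $\#\mu$ boxes of that column, and a semistandard, lattice-word filling is forced to put $1,2,\dots,\#\mu$ down that column (strict increase down a column, plus the lattice condition read from the first row). Thus the first column contributes a fixed, forced sub-tableau that uses exactly one copy of each of the letters $1,\dots,\#\mu$ — precisely the first column of the content $\mu$. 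Deleting the first column of $\nu$ (and of $\lambda$, and the used boxes) gives a content-preserving bijection between LR tableaux of shape $\nu/\lambda$, content $\mu$, and LR tableaux of shape $\hat\nu/\hat\lambda$, content $\hat\mu$; one checks semistandardness and the lattice property are preserved because the deleted column was leftmost and its removal shifts every remaining entry's row-count of smaller letters uniformly. Alternatively, I could cite this directly as a boundary case of Knutson--Tao--Woodward's factorization of LR coefficients along faces of the honeycomb/hive cone (the facet where the relevant Horn inequality $\#\nu\le\#\lambda+\#\mu$ is tight), which is the remark the authors make right before the lemma.

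The main obstacle is making the bijection argument airtight — specifically, verifying that the lattice (Yamanouchi) word condition really does force the first column of a shape-$\nu/\lambda$ LR tableau to read $1,2,\dots,\#\mu$ top to bottom when $\#\nu=\#\lambda+\#\mu$, and that removing it preserves the lattice condition. A clean way to sidestep delicate tableau bookkeeping is to argue on symmetric functions instead: the hypothesis $\#\nu=\#\lambda+\#\mu$ says all three partitions are "as tall as possible," and one can use the dual Jacobi--Trudi (columns $\leftrightarrow$ $e$-functions) expansion together with the fact that removing a first column corresponds to a controlled operation on the straightening relations; but I expect the tableau bijection, or simply invoking the Knutson--Tao--Woodward boundary factorization, to be the most economical route, and I would present the combinatorial bijection with the KTW reference as corroboration.
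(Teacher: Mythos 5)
Your proposal is correct and follows essentially the same route as the paper: reduce to the two-factor case by induction on $n$ (with the height inequalities forcing $\#\mu=\sum_{i<n}\#\lambda^i$ and $\#\nu=\#\mu+\#\lambda^n$ in every nonzero summand), and handle the base case by the known factorization of Littlewood--Richardson coefficients on the facet where $\#\nu\le\#\lambda+\#\mu$ is tight, which is exactly the citation the paper uses. Your sketched first-column tableau bijection is a reasonable self-contained substitute for that citation, but it is supplementary rather than a different method.
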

\begin{proof}
Triples of partitions $(\gamma,\delta,\rho)$ with non-zero Littlewood-Richardson coefficients lie in a cone cut out by Horn equalities. One (transposed) example of such an inequality is that $\#\gamma \leq \#\delta+\#\rho$.  Littlewood-Richardson coefficients on the boundary of the cone satisfy factorization properties, as shown in Theorem 1.4 of \cite{factorizationpaper}. After transposing, a special case of this theorem is the statement that 
\begin{equation} \label{basecase}
c^{\gamma}_{\delta\rho}=c^{\hat{\gamma}}_{\hat{\delta} \hat{\rho}} c^{(\#\gamma)}_{(\#\delta) (\#\rho)}=c^{\hat{\gamma}}_{\hat{\delta} \hat{\rho}}.
\end{equation}
The proof proceeds by induction. The base case, when $n=2$, is \eqref{basecase}. Suppose the statement holds for $n-1$ partitions. Let $S$ denote the set of partitions $\mu$ such that $c^{\mu}_{\lambda^1 \cdots \lambda^{n-1}} >0$. By induction, there is a one-to-one correspondence between $S$ and the corresponding set 
$\hat{S}\coloneqq\{\eta \mid c^{{\eta}}_{\hat{\lambda}^1 \dots \hat{\lambda}^{n-1}}>0\}$
 for the $\hat{\lambda}^i$, given by taking $\mu \in S$ to $\hat{\mu} \in \hat{S}$. 
 Now
 \[c^{\nu}_{\lambda^1 \dots \lambda^n}=\sum_{\mu \in S} c^{\mu}_{\lambda^1 \cdots \lambda^{n-1}} c^{\nu}_{\mu \lambda^n}.\]
 The assumption holds for both factors in each summand, so 
 \[c^{\nu}_{\lambda^1 \dots \lambda^n}=\sum_{\mu \in S} c^{\hat{\mu}}_{\hat{\lambda}^1 \dots \hat{\lambda}^{n-1}} c^{\hat{\nu}}_{\hat{\mu}\hat{\lambda}^n}=c^{\hat{\nu}}_{\hat{\lambda}^1 \dots \hat{\lambda}^n}. \qedhere\]
\end{proof}

\begin{lemma}\label{lem:quantum-classical}
Consider a collection of partitions $\lambda^\bullet=(\lambda^1,\dots,\lambda^n)$ in an $r\times l$ rectangle such that $\sum |\lambda^i| = (r+1)(l+1)$.
If $\sum \#\lambda^i = 2(r+1)$,  then we have the equality $c_{\lambda^\bullet,(l)}^{1, (l^{r+1})} = c_{\lambda^\bullet}^{(l^{r+1},1^{r+1})},$
where the left-hand side is a
 degree 1 generalized quantum Littlewood-Richardson coefficient for $\QH^*\Gr_{r+1,r+1+l}$, as defined in \eqref{eq:quantum-product},
 and the right-hand side is 
a classical generalized Littlewood-Richardson coefficient, as defined in \eqref{eq:LR}. \end{lemma}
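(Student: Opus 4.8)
The plan is to compute the left-hand side $c_{\lambda^\bullet,(l)}^{1,(l^{r+1})}$ directly from the ``compute classically, then remove rim-hooks'' description of the quantum product (Lemma~\ref{mainlemma}) and match it to $c_{\lambda^\bullet}^{(l^{r+1},1^{r+1})}$. First I would set $m = r+1+l$ and $k = r+1$, and unwind the definition \eqref{eq:quantum-product}: the coefficient $c_{\lambda^\bullet,(l)}^{1,(l^{r+1})}$ is the coefficient of $q\,\sigma_{(l^{r+1})}$ in $\sigma_{\lambda^1}*\cdots*\sigma_{\lambda^n}*\sigma_{(l)}$ in $\QH^*\Gr_{r+1,r+1+l}$. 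By the Main Lemma, one first forms the classical product $s_{\lambda^1}\cdots s_{\lambda^n}\cdot s_{(l)} = \sum_\mu c^{\mu}_{\lambda^\bullet,(l)}\, s_\mu$ of symmetric functions, then for each $\mu$ applies $\sigma_\mu = \det(\sigma_{\mu_i+j-i})$ and removes rim-hooks. Since $\sum_i|\lambda^i| + l = (r+1)(l+1) + l$, degree bookkeeping (total degree $= |\mu|$, with $q$ in degree $m = r+1+l$) shows the only $\mu$ that can contribute a $q^1\sigma_{(l^{r+1})}$ term after a single $m$-rim-hook removal are those with $|\mu| = (l^{r+1}) + m = (r+1)l + (r+1) + l = (r+1)(l+1) + l$, i.e.\ $|\mu| = |(l^{r+1},1^{r+1})| - (\text{nothing})$... more precisely $|\mu| = (r+2)(l+1) - 1$, and the ones that actually yield $\sigma_{(l^{r+1})}$ after removing one $m$-rim-hook of the appropriate width.

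The key combinatorial step is then: which partitions $\mu$ (fitting in no $(l^{r+1})$ rectangle, but arising in the classical product) have the property that removing a single $m$-rim-hook yields exactly $(l^{r+1})$, and with what sign? Starting from $(l^{r+1})$ and \emph{adding} an $m$-rim-hook: an $m$-rim-hook added to the bottom-left of $(l^{r+1})$ that produces a partition has width $w$ and occupies, on the boundary of $(l^{r+1})$, a hook that wraps around the southwest corner. I expect to show that the only such $\mu$ producing a \emph{legal} removal back to $(l^{r+1})$ and landing in the support of $s_{\lambda^1}\cdots s_{\lambda^n}\cdot s_{(l)}$ is $\mu = (l^{r+1}, 1^{r+1})$ itself (using the hypothesis $\sum_i \#\lambda^i = 2(r+1)$, which forces the column content to be exactly right so that the $s_{(l)}$ factor supplies the extra width-$l$ row and the $\lambda^i$ supply the $1^{r+1}$ tail — here Lemma~\ref{lem:LR}, or at least its column-factorization idea, controls which $\mu$ with $\#\mu = 2(r+1)$ occur). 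Then I would check the sign: removing the $m$-rim-hook $(l^{r+1},1^{r+1}) \setminus (l^{r+1})$ has width $w = l+1$ wait — I need to recount; the rim-hook removed is the bottom $r+1$ boxes of the first column plus... in any case the width $w$ and the $(-1)^{w+m-k}$ factor of Lemma~\ref{mainlemma} must come out to $+1$, and I would verify this by a direct count of the columns occupied. Finally, $c^{(l^{r+1},1^{r+1})}_{\lambda^\bullet,(l)} = c^{(l^{r+1},1^{r+1})}_{\lambda^\bullet}$ because the $s_{(l)}$ factor can only contribute its single row as the top width-$l$ row of $(l^{r+1},1^{r+1})$ (any other placement overfills a column), giving the stated right-hand side.

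The main obstacle I anticipate is the bookkeeping in the second step: ruling out all \emph{other} partitions $\mu$ in the classical product whose $m$-rim-hook removal could also land on $(l^{r+1})$ — a priori there could be several boundary rim-hooks of $(l^{r+1})$-plus-something, or illegal rim-hooks in larger $\mu$ that one must show are excluded by the column-sum hypothesis. I would handle this by translating everything through the column-removal map: by Lemma~\ref{lem:LR} applied to $\lambda^\bullet$ (whose column sum is exactly $2(r+1)$), the only $\mu$ with $c^\mu_{\lambda^\bullet} \neq 0$ and $\#\mu = 2(r+1)$ are constrained so that $\hat\mu$ (first column removed) matches $c^{\hat\mu}_{\hat\lambda^\bullet}$; combined with the degree and width constraints this should pin down $\mu = (l^{r+1},1^{r+1})$ as the unique surviving term and make the sign computation unambiguous.
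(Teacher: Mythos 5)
You follow the same route as the paper---apply Lemma~\ref{mainlemma}, compute the product classically, remove a single rim-hook, isolate the unique contributing partition, check the sign, and then strip off the $s_{(l)}$ factor---but the execution breaks exactly at the crux. Your own degree count correctly shows that a contributing partition $\mu$ must have $|\mu| = (r+1)(l+1)+l$, yet you then assert the unique such $\mu$ is $(l^{r+1},1^{r+1})$, whose size is only $(r+1)(l+1)$; removing an $(l+r+1)$-rim-hook from it cannot produce $(l^{r+1})$ (the box count is wrong). The partition you need is $\gamma' = (l^{r+2},1^{r+1})$: the rim-hook consisting of its $(r+2)$-nd row together with the bottom $r+1$ boxes of its first column has length $l+r+1$ and width $w=l$, so (in the notation of Lemma~\ref{mainlemma}, with $k=r+1$, $m=r+1+l$) the sign is $(-1)^{w+m-k}=(-1)^{2l}=+1$, and its removal yields $(l^{r+1})$. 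Consequently your closing identity $c^{(l^{r+1},1^{r+1})}_{\lambda^\bullet,(l)} = c^{(l^{r+1},1^{r+1})}_{\lambda^\bullet}$ is degree-inconsistent as written (its left-hand side vanishes); what is needed is $c^{\gamma'}_{\lambda^\bullet,(l)} = c^{\gamma}_{\lambda^\bullet}$ with $\gamma=(l^{r+1},1^{r+1})$, which does follow from Pieri: any $\mu$ with $c^{\mu}_{\lambda^\bullet}\neq 0$ has at most $\sum_i \#\lambda^i = 2(r+1)$ rows, so the horizontal strip $\gamma'/\mu$ must contain the lowest box of the first column of $\gamma'$ and no other box of that column, and the interlacing condition then forces $\mu=\gamma$.

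The uniqueness step---which is precisely where the hypothesis $\sum_i\#\lambda^i = 2(r+1)$ enters---is also not secured by your plan: Lemma~\ref{lem:LR} concerns coefficients $c^{\nu}_{\lambda^\bullet}$ with $\#\nu = \sum_i\#\lambda^i$ and does not obviously control which partitions admit a legal $(l+r+1)$-rim-hook removal onto $(l^{r+1})$. The paper's argument is a short row count: a rim-hook occupying $c\leq l$ columns and $d$ rows has $c+d-1 = l+r+1$ boxes, so $d\geq r+2$, hence any partition of width at most $l$ whose rim-hook removal gives $(l^{r+1})$ has at least $(r+1)+(r+2) = 2(r+1)+1$ rows; on the other hand every partition in the support of $s_{\lambda^1}\cdots s_{\lambda^n}\,s_{(l)}$ has at most $\sum_i\#\lambda^i+1 = 2(r+1)+1$ rows. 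Equality pins down $\gamma'$ as the unique contributor. With this counting argument substituted for your column-removal heuristic, and with $\gamma'$ in place of $\gamma$ throughout, your outline becomes the paper's proof.
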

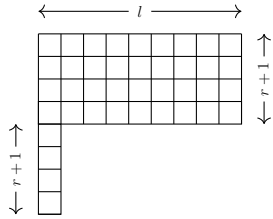
\begin{figure}[h!]
\centering
\begin{tikzpicture}[scale = .3]
\node[scale = 0.5] at (4.5, -1) {$l$};
\draw[->] (4, -1) -- (0, -1);
\draw[->] (5, -1) -- (9, -1);
\node[scale = .5, rotate = 90] at (10, -4) {$r+1$};
\draw[->] (10, -5) -- (10, -6);
\draw[->] (10, -3) -- (10, -2);
\node[scale = 0.5, rotate = 90] at (-1, -8) {$r+1$};
\draw[->] (-1, -7) -- (-1, -6);
\draw[->] (-1, -9) -- (-1, -10);
\draw (0, -5) rectangle (9, -6);
\draw (0, -6) rectangle (1, -10);
\draw (0, -2) -- (9, -2);
\draw (0, -3) -- (9, -3);
\draw (0, -4) -- (9, -4);
\draw (0, -2) -- (0, -6);
\draw (1, -2) -- (1, -6);
\draw (2, -2) -- (2, -6);
\draw (3, -2) -- (3, -6);
\draw (4, -2) -- (4, -6);
\draw (5, -2) -- (5, -6);
\draw (6, -2) -- (6, -6);
\draw (7, -2) -- (7, -6);
\draw (8, -2) -- (8, -6);
\draw (9, -2) -- (9, -6);
\draw (0, -7) -- (1, -7);
\draw (0, -8) -- (1, -8);
\draw (0, -9) -- (1, -9);
\draw (0, -10) -- (1, -10);
\end{tikzpicture}
\caption{The partition $\gamma\coloneqq(l^{r+1},1^{r+1})$ of Lemma~\ref{lem:quantum-classical}. Here $l = 9$ and $r = 3$.}
\label{gammafig}
\end{figure}

\begin{proof}
By Lemma~\ref{mainlemma}, we can calculate the quantum Littlewood-Richardson coefficient $c_{\lambda^\bullet,(l)}^{1, (l^{r+1})}$ by first computing the classical product of Schur polynomials and then removing rim-hooks to obtain $q\sigma_{(l^{r+1})}$. We are therefore interested in partitions $\beta$ of width at most $l$ that have a length $l+r+1$ rim-hook that produces the partition $ (l^{r+1})$. It follows from the definition of rim-hooks that the minimum length of such a partition $\beta$ is $2(r+1)+1$. However, if $\beta$ appears with non-zero coefficient in the classical product of the $\lambda^i$ and $(l)$, it also has length at most 
\[ \sum \# \lambda^i +1=2(r+1)+1.\]
It follows that the only possible $\beta$ that contribute have length precisely $2(r+1)+1$. There is exactly one such $\beta$:
 \[\gamma'=(\underbrace{l,\dots,l}_{r+2 \text{ times}},\underbrace{1,\dots,1}_{r+1 \text{ times}}),\]
 pictured in Figure~\ref{gammaprime}.
 
 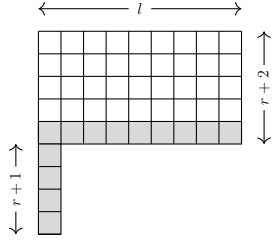
\begin{figure}
\centering
\begin{tikzpicture}[scale = 0.3]
\node[scale = 0.5] at (4.5, 0) {$l$};
\draw[->] (4, 0) -- (0, 0);
\draw[->] (5, 0) -- (9, 0);
\node[scale = 0.5, rotate = 90] at (10, -3.5) {$r+2$};
\draw[->] (10, -4.5) -- (10, -6);
\draw[->] (10, -2.5) -- (10, -1);
\node[scale = 0.5, rotate = 90] at (-1, -8) {$r+1$};
\draw[->] (-1, -7) -- (-1, -6);
\draw[->] (-1, -9) -- (-1, -10);
\draw[fill=gray!30] (0, -5) rectangle (9, -6);
\draw[fill=gray!30] (0, -6) rectangle (1, -10);
\draw (0, -1) -- (9, -1);
\draw (0, -2) -- (9, -2);
\draw (0, -3) -- (9, -3);
\draw (0, -4) -- (9, -4);
\draw (0, -1) -- (0, -6);
\draw (1, -1) -- (1, -6);
\draw (2, -1) -- (2, -6);
\draw (3, -1) -- (3, -6);
\draw (4, -1) -- (4, -6);
\draw (5, -1) -- (5, -6);
\draw (6, -1) -- (6, -6);
\draw (7, -1) -- (7, -6);
\draw (8, -1) -- (8, -6);
\draw (9, -1) -- (9, -6);
\draw (0, -7) -- (1, -7);
\draw (0, -8) -- (1, -8);
\draw (0, -9) -- (1, -9);
\draw (0, -10) -- (1, -10);
\end{tikzpicture}
\caption{The partition $\gamma'$ with its length $l + r+ 1$ rim-hook shaded in gray. In this example, $l = 9$ and $r = 3$.
The partition $\gamma$ of Figure~\ref{gammafig} is obtained by removing the top row of $\gamma'$.}
\label{gammaprime}
\end{figure}

 As the sign appearing in Lemma~\ref{mainlemma} is positive in this case, we precisely obtain the  generalized Littlewood-Richardson coefficient $c_{\lambda^\bullet,(l)}^{\gamma'} = c_{\lambda^\bullet}^{\gamma}$ where $\gamma = (l^{r+1},1^{r+1})$ is obtained from $\gamma'$ by removing a maximal row. 
\end{proof}

\subsection{Two-step flag varieties} \label{a:twostep}

We give some  constructions for and results about Schubert calculus on  two-step flag varieties, including computations using multiple bases for their cohomology rings.

Consider the flag variety $\Fl_{a,b;m}$ of nested subspaces $V\subset W$ in an $m$-dimensional vector space, where $\dim V=a$, \(\dim W=b\). The cohomology ring
$\H^*\Fl_{a,b;m}$ has a basis of Schubert classes indexed by permutations $w\in S_{m}$ such that $w(i)<w(i+1)$ for all $i\neq a,b$, i.e.\ the only  possible descents of $w$ are at positions $a,b$.
Following \cite{gukalashnikov}, we give an alternative indexing of these Schubert classes in terms of pairs of partitions $(\alpha,\beta)$ with $\alpha\subseteq (a^{b-a})$ and $\beta\subseteq (b^{m-b})$. We can also view $\alpha$ as a subset of $(a^{m-a})$. 

Let $w$ be a permutation indexing a Schubert class on $\Fl_{a,b;m}$. To find the pair of partitions $(\alpha,\beta)$ corresponding to $w$, we factor $w$ into two Grassmann permutations, $w=w_2 w_1.$
To define $w_1$ and $w_2$, we set $\rho$ to be the permutation that
\begin{itemize}
\item fixes $j>b$, i.e.\ $\rho(j)=j$ for all $j>b$, and
\item reorders $w(1),\dots,w(b)$ into increasing order, i.e.\ $w(\rho(1))<\cdots<w(\rho(b))$.
\end{itemize}
We set $w_2(i)\coloneqq w(\rho(i))$, and $w_1\coloneqq\rho^{-1}.$ Note that both $w_1$ and $w_2$ are Grassmann permutations, the first with a descent at $a$ and the second with a descent at $b$. As such, they define partitions $\alpha \subseteq (a^{b-a}) \subseteq (a^{m-a})$ and $\beta \subseteq (b^{m-b})$ respectively.

Given a pair of partitions $(\alpha,\beta)$, one can reverse this process to define a permutation $w_{\alpha,\beta}$. Note that the factorization of $w_{\alpha,\beta}$ above is precisely the factorization
\[w_{\alpha,\beta}=w_{\emptyset,\beta.}w_{\alpha,\emptyset}.\]

\begin{definition} The  \emph{inversion set} of a permutation $\rho$ is the set
$\{i<j \mid \rho(j)>\rho(i)\}.$
The number of inversions gives the codimension of the associated Schubert class.  
\end{definition}
\begin{remark}We will need the following observation in the proof of Proposition~\ref{pro:comparison}: the set  $\{i<j \leq b \mid w_{\mu,\nu}(j)>w_{\mu,\nu}(i)\}$ is the inversion set of $w_{\mu,\emptyset}$. 
\end{remark}

The pair $(\alpha,\beta)$ of partitions for $\Fl_{a,b;m}$ also corresponds to a product of Schubert classes in $\Gr_{a,b} \times \Gr_{b,m}$. 
In general, Schubert calculus on the flag variety behaves very differently than that on the product of Grassmannians, where it is governed by Littlewood-Richardson rules. However, in certain cases, these two products coincide. We prove the following equality:
\begin{proposition}\label{pro:comparison} Let $(\alpha^1,\beta^1),\dots,(\alpha^n,\beta^n)$ be $n$ pairs of partitions indexing Schubert classes of the flag variety $\Fl_{a,b;m}$. Suppose that
$\sum_{i=1}^n |\alpha^i|+|\beta^i|=a(b-a)+b(m-b)$.
If we have the inequality $\sum_{i=1}^n |\alpha^i| \leq a(b-a)$, then
\begin{equation}\label{eq:prop}\int_{\Fl_{a,b;m}} \prod_{i=1}^n \sigma_{(\alpha^i,\beta^i)}= \int_{\Gr_{a,b}} \prod_{i=1}^n \sigma_{\alpha^i} \int_{\Gr_{b,m}} \prod_{i=1}^n \sigma_{\beta^i}.
\end{equation}
In particular, if $\sum_{i=1}^n |\alpha^i| < a(b-a)$, this quantity is zero.
\end{proposition}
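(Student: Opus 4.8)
The plan is to exploit the fiber bundle $\pi\colon\Fl_{a,b;m}\to\Gr_{b,m}$ sending $(V\subset W)\mapsto W$, whose fiber over $W$ is $\Gr(a,W)\cong\Gr_{a,b}$. The structural input I would record first is how a two-step Schubert variety sits over the base: for the reference flag $F_\bullet$, among the conditions defining $X_{(\alpha,\beta)}$ the ones involving $W$ are exactly those cut out by the Grassmann permutation $w_{\emptyset,\beta}$ (since $\{w_{\alpha,\beta}(1),\dots,w_{\alpha,\beta}(b)\}=\{w_{\emptyset,\beta}(1),\dots,w_{\emptyset,\beta}(b)\}$ as sets), so $\pi(X_{(\alpha,\beta)})=X_\beta\subseteq\Gr_{b,m}$; moreover over the open cell $\Omega_\beta\subseteq X_\beta$ the map $X_{(\alpha,\beta)}\cap\pi^{-1}(\Omega_\beta)\to\Omega_\beta$ is a fiber bundle whose fiber over $W$ is the Schubert variety of $\Gr(a,W)$ determined by the induced flag $\{W\cap F_p\}_p$. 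The identification of this fiber with the Schubert variety $X_\alpha$ of $\Gr_{a,b}$ is precisely the content of the inversion-set observation recorded just above: the part of the inversion set of $w_{\alpha,\beta}$ supported on positions $\le b$ coincides with the inversion set of $w_{\alpha,\emptyset}$, so the condition imposed on $V$ inside $\Gr(a,W)$ is exactly the one indexed by $w_{\alpha,\emptyset}$, i.e.\ the class $\sigma_\alpha$, and a dimension count gives $\dim\pi(X_{(\alpha,\beta)})=\dim X_\beta$, confirming $\pi(X_{(\alpha,\beta)})=X_\beta$ with generic fiber $X_\alpha$.

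Granting this, I would prove \eqref{eq:prop} by a point count. Choose generic $g_1,\dots,g_n\in GL_m$; since $\Fl_{a,b;m}$ is homogeneous, Kleiman transversality makes $\bigcap_i g_iX_{(\alpha^i,\beta^i)}$ a transverse — hence, by $\sum_i(|\alpha^i|+|\beta^i|)=\dim\Fl_{a,b;m}$, finite and reduced — set of points whose cardinality is $\int_{\Fl_{a,b;m}}\prod_i\sigma_{(\alpha^i,\beta^i)}$. Applying $\pi$, every such flag $(V\subset W)$ has $W\in\bigcap_i g_iX_{\beta^i}\subseteq\Gr_{b,m}$. If $\sum_i|\alpha^i|<a(b-a)$, then $\sum_i|\beta^i|>b(m-b)=\dim\Gr_{b,m}$, so for generic $g_i$ this base intersection is empty and the integral vanishes; the right-hand side of \eqref{eq:prop} also vanishes since now $\prod_i\sigma_{\beta^i}=0$ in $\H^*\Gr_{b,m}$. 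This settles the last sentence of the proposition, so assume $\sum_i|\alpha^i|=a(b-a)$ and $\sum_i|\beta^i|=b(m-b)$.

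Now a second application of Kleiman on $\Gr_{b,m}$ shows $\bigcap_i g_iX_{\beta^i}=\{W_1,\dots,W_{N_\beta}\}$ is a transverse finite set with $N_\beta=\int_{\Gr_{b,m}}\prod_i\sigma_{\beta^i}$, and a dimension count (the loci $\bigcap_i g_iX_{\beta^i}\cap g_j\partial X_{\beta^j}$ are generically empty) forces each $W_k$ to lie in all of the open cells $g_i\Omega_{\beta^i}$ simultaneously. By the first paragraph, for a fixed $W_k$ the flag $(V\subset W_k)$ lies in $g_iX_{(\alpha^i,\beta^i)}$ iff $V$ lies in the translate $g_i'X_{\alpha^i}$ of $X_{\alpha^i}$ inside $\Gr(a,W_k)\cong\Gr_{a,b}$ determined by the induced flag $\{W_k\cap g_iF_\bullet\}$. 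For generic $g_i$ these $n$ induced flags on each of the finitely many $W_k$ are in general position, so $\bigcap_i g_i'X_{\alpha^i}$ is transverse of cardinality $N_\alpha=\int_{\Gr_{a,b}}\prod_i\sigma_{\alpha^i}$; summing over $k$ yields $\#\bigcap_i g_iX_{(\alpha^i,\beta^i)}=N_\alpha N_\beta$, which is \eqref{eq:prop}. (Equivalently, one can phrase the last two paragraphs cohomologically: $\sigma_{(\emptyset,\beta)}=\pi^*\sigma_\beta$ and $\sigma_{(\alpha,\beta)}=\sigma_{(\alpha,\emptyset)}\cdot\pi^*\sigma_\beta$ as in \cite{gukalashnikov}, so $\prod_i\sigma_{(\alpha^i,\beta^i)}=\big(\prod_i\sigma_{(\alpha^i,\emptyset)}\big)\cdot\pi^*\big(\prod_i\sigma_{\beta^i}\big)$, and the projection formula reduces the integral to $\int_{\Gr_{b,m}}\pi_*\big(\prod_i\sigma_{(\alpha^i,\emptyset)}\big)\cdot\prod_i\sigma_{\beta^i}$, with $\pi_*$ of the product of the $\sigma_{(\alpha^i,\emptyset)}$ being $0$ when $\sum_i|\alpha^i|<a(b-a)$ and the fiber integral $N_\alpha$ otherwise.)

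The main obstacle is the bookkeeping in the first paragraph — verifying that the Schubert condition on $(V\subset W)$, restricted to the $\pi$-fiber over a point in the open cell of the base Schubert variety, is exactly the $\alpha$-Schubert condition on $V$ with respect to the induced flag, with permutations, flags, and the two Young-diagram boxes $(a^{b-a})$ and $(b^{m-b})$ matched correctly; this is where the inversion-set observation is used. Everything else is Kleiman transversality and the routine genericity claims (the base points $W_k$ avoid all Schubert boundaries and carry mutually general induced flags), which I expect to be straightforward.
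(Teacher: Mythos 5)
Your main argument is correct, but it takes a genuinely different route from the paper. You work geometrically with the projection $\pi\colon\Fl_{a,b;m}\to\Gr_{b,m}$: the two-step Schubert variety $X_{(\alpha,\beta)}$ maps onto $X_\beta$, with fibre over each point of the open cell equal to the $\alpha$-Schubert variety for the induced flag (a standard fact about projections of Schubert varieties, and indeed the bookkeeping you flag is the only real work), and then Kleiman transversality turns $\int_{\Fl_{a,b;m}}\prod_i\sigma_{(\alpha^i,\beta^i)}$ into a point count that factors as $N_\alpha N_\beta$, with the strict-inequality case giving an empty base intersection and hence vanishing. This is essentially the geometric mechanism behind the cited result \cite{Richmond12}, which the paper recovers in the boundary case $\sum_i|\alpha^i|=a(b-a)$, here extended to include the vanishing statement. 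The paper instead argues purely cohomologically: it introduces the auxiliary basis $s_{\alpha,\beta}=\sigma_{\alpha,\emptyset}\sigma_{\emptyset,\beta}$ of \cite{gukalashnikov}, proves a unitriangular change of basis with respect to $|\alpha|$ (Lemma~\ref{lem:comparison}, via \cite{PurbhooSottile}), and extracts the coefficient of the top class from the rim-hook product rule, the hypothesis $\sum_i|\alpha^i|\le a(b-a)$ killing all correction terms. Your route buys enumerative transparency; the paper's slots directly into its basis-comparison machinery and avoids any transversality discussion.

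Two caveats. First, your parenthetical ``equivalent cohomological rephrasing'' asserts $\sigma_{(\alpha,\beta)}=\sigma_{(\alpha,\emptyset)}\cdot\pi^*\sigma_\beta$; this identity is false in general (already on $\Fl_{1,2;3}$, with $\alpha$ and $\beta$ single boxes, the product $\sigma_{(\alpha,\emptyset)}\sigma_{(\emptyset,\beta)}$ equals $\sigma_{(\alpha,\beta)}$ plus the extra Schubert class indexed by $(\emptyset,(2))$), and the discrepancy is exactly the content of Lemma~\ref{lem:comparison}. The projection-formula variant can be repaired, but only after invoking that triangularity so that the correction terms push forward to zero under the hypothesis; as written the aside is not equivalent to your geometric argument, whose genericity precisely avoids the boundary strata responsible for the correction terms. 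Second, the step ``for generic $g_i$ the induced flags on each $W_k$ are in general position'' is circular as phrased, since the $W_k$ depend on the $g_i$; but it is not needed: the fibrewise intersection over $W_k$ injects into the total intersection (because $W_k$ lies in all the open cells $g_i\Omega_{\beta^i}$), so it is finite, its tangent-space intersection at any of its points is contained in that of the total intersection and hence trivial, and smoothness of the translated Schubert varieties at these points (also supplied by Kleiman) then forces each local multiplicity to be $1$, giving exactly $N_\alpha$ points in each fibre. With these repairs your proof is complete.
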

In the case when $\sum  |\alpha^i| = a(b-a)$, this recovers  \cite[Thm~1.1]{Richmond12} for $\Fl_{a,b;m}$; see also \cite{Richmond09} for related results.

In order to prove this proposition, we use both the Schubert basis as well as an alternative basis whose product rules are described in \cite{gukalashnikov}.  The alternative basis is indexed by the same pairs of partitions as the Schubert basis; the element corresponding to a pair $(\alpha,\beta)$ is $s_{\alpha,\beta}\coloneqq\sigma_{\alpha,\emptyset} \sigma_{\emptyset,\beta}$, which is a product of Schubert classes. This class can also be expressed as a product of Schur polynomials in the Chern roots of the tautological bundles of the flag variety.   We first state and prove an auxiliary result about the expansion of $s_{\alpha,\beta}$ in the Schubert basis.

\begin{lemma}\label{lem:comparison} Let $(\alpha,\beta)$ be a pair of partitions indexing a Schubert classes of the flag variety $\Fl_{a,b;m}$. Then
 \begin{align} \label{eq:comparison1} s_{\alpha,\beta} &=\sigma_{\alpha,\beta}+\sum_{(\mu,\nu),  |\mu| < |\alpha|} c_{\mu,\nu} \sigma_{\mu,\nu} \,  \text{ for some positive integers $c_{\mu,\nu}$, and }\\ \label{eq:comparison2} 
 \sigma_{\alpha,\beta} &=s_{\alpha,\beta}+\sum_{(\mu,\nu), |\mu| < |\alpha|} d_{\mu,\nu} s_{\mu,\nu} \,  \text{ for some  integers $d_{\mu,\nu}$}.
\end{align}
It follows that the top degree classes in both bases agree:  
\begin{equation}\label{eq:topdegree}
\sigma_{\alpha,\beta}= s_{\alpha,\beta} \, \text{ when } (\alpha,\beta)=(a^{b-a},b^{m-b}).
\end{equation}

\end{lemma}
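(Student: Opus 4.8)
The plan is to prove \eqref{eq:comparison1} directly, deduce \eqref{eq:comparison2} by inverting a unitriangular matrix, and read off \eqref{eq:topdegree} as the extremal case where the correction terms are forced to vanish. Recall $s_{\alpha,\beta}=\sigma_{\alpha,\emptyset}\cdot\sigma_{\emptyset,\beta}$ is a product of two Schubert classes. Since $w_{\alpha,\emptyset}$ has its only descent at $a$, the class $\sigma_{\alpha,\emptyset}$ is the pullback of the Schubert class $\sigma_\alpha$ (with $\alpha$ viewed inside $(a^{m-a})$) along the projection $\pi_1\colon\Fl_{a,b;m}\to\Gr_{a,m}$ forgetting $W$; likewise $\sigma_{\emptyset,\beta}=\pi_2^*\sigma_\beta$ along $\pi_2\colon\Fl_{a,b;m}\to\Gr_{b,m}$ forgetting $V$. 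By positivity of Schubert structure constants (Richardson varieties / Kleiman transversality) we have $s_{\alpha,\beta}=\sum_{(\mu,\nu)}c_{\mu,\nu}\sigma_{\mu,\nu}$ with all $c_{\mu,\nu}\in\Z_{\ge0}$, and comparing codimensions every term has $|\mu|+|\nu|=|\alpha|+|\beta|$. So the content of \eqref{eq:comparison1} is that $c_{\alpha,\beta}=1$ and $c_{\mu,\nu}=0$ unless $|\mu|<|\alpha|$.

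\textbf{The key triangularity.} I would work with the $\Gr_{a,b}$-bundle $\pi_2\colon\Fl_{a,b;m}\to\Gr_{b,m}$. For $\mu\subseteq(a^{b-a})$ the class $\sigma_{\mu,\emptyset}$ restricts on each fiber to the Schubert class $\sigma_\mu$ of $\Gr_{a,b}$, so $\{\sigma_{\mu,\emptyset}\}_{\mu\subseteq(a^{b-a})}$ is a Leray--Hirsch basis and $\H^*\Fl_{a,b;m}=\bigoplus_{\mu}M_\mu$ with $M_\mu=\sigma_{\mu,\emptyset}\cdot\pi_2^*\H^*\Gr_{b,m}$; set $G_k=\bigoplus_{|\mu|\le k}M_\mu$. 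The main point is that
\[\sigma_{\mu,\nu}=\sigma_{\mu,\emptyset}\cdot\pi_2^*\sigma_\nu+\sum_{|\mu'|<|\mu|}\sigma_{\mu',\emptyset}\cdot\pi_2^*\gamma_{\mu',\nu}\]
for suitable classes $\gamma_{\mu',\nu}$, so in particular $\sigma_{\mu,\nu}\in G_{|\mu|}$. This holds because the Schubert variety $X_{\mu,\nu}$ is contained in $\pi_2^{-1}(Y_\nu)$, where $Y_\nu\subseteq\Gr_{b,m}$ is the Schubert variety governing the descent at $b$ of $w_{\mu,\nu}$ (cf.\ the inversion-set description preceding the proposition), so each Gysin component $\gamma_{\mu',\nu}=\pi_{2*}\!\big(\sigma_{\mu,\nu}\cdot\sigma_{(\mu')^\vee,\emptyset}\big)$ is a class supported on $Y_\nu$, hence of degree at least $|\nu|$ when nonzero, which forces $|\mu'|\le|\mu|$; and for $\mu'=\mu$ the class $\gamma_{\mu,\nu}$ has degree exactly $|\nu|$ and equals $\sigma_\nu$, because over a general $W\in Y_\nu$ the fiber of $X_{\mu,\nu}\cap gX_{\mu^\vee,\emptyset}$ is a transverse intersection of complementary-dimensional Schubert varieties in $\Gr_{a,W}\cong\Gr_{a,b}$, i.e.\ a single reduced point.

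\textbf{Conclusion from the triangularity.} Granting the display, $\{\sigma_{\mu,\nu}:|\mu|\le k\}$ spans $G_k$ for every $k$. Now $s_{\alpha,\beta}=\sigma_{\alpha,\emptyset}\pi_2^*\sigma_\beta$ lies in the single summand $M_\alpha\subseteq G_{|\alpha|}$; expanding it in the Schubert basis and extracting $M_\alpha$-components (the $M_\alpha$-component of $\sigma_{\mu,\nu}$ is $\sigma_{\mu,\emptyset}\pi_2^*\sigma_\nu$ if $\mu=\alpha$, and $0$ whenever $|\mu|=|\alpha|$ with $\mu\ne\alpha$ or whenever $|\mu|<|\alpha|$), then using the linear independence of the $\sigma_{\alpha,\emptyset}\pi_2^*\sigma_\nu$, we get $c_{\alpha,\beta}=1$, $c_{\alpha,\nu}=0$ for $\nu\ne\beta$, and $c_{\mu,\nu}=0$ for all remaining $(\mu,\nu)$ with $|\mu|\ge|\alpha|$. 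This is \eqref{eq:comparison1}. Then \eqref{eq:comparison2} is immediate: \eqref{eq:comparison1} says the transition matrix from $\{s_{\mu,\nu}\}$ to $\{\sigma_{\mu,\nu}\}$ is unitriangular for the order ``$|\mu|$ decreasing,'' so its inverse has integer entries of the same shape. Finally, when $(\alpha,\beta)=\big((a^{b-a}),(b^{m-b})\big)$ the codimension $|\alpha|+|\beta|$ equals $\dim\Fl_{a,b;m}$, and any $(\mu,\nu)$ with $|\mu|+|\nu|=|\alpha|+|\beta|$, $\mu\subseteq(a^{b-a})$, $\nu\subseteq(b^{m-b})$ must have $|\mu|=|\alpha|$; so the correction sums in \eqref{eq:comparison1}--\eqref{eq:comparison2} are empty and $\sigma_{\alpha,\beta}=s_{\alpha,\beta}$, which is \eqref{eq:topdegree}.

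\textbf{Main obstacle.} Everything outside the displayed triangularity is positivity plus linear algebra, so the crux is exactly that display --- equivalently, that the Gysin push-forwards $\pi_{2*}(\sigma_{\mu,\nu}\cdot\sigma_{(\mu')^\vee,\emptyset})$ are supported on $Y_\nu$ with the expected leading term $\sigma_\nu$. I would establish this from the relative Schubert variety description of $X_{\mu,\nu}$ used in \cite{BKT} (together with the inversion-set observation recorded before the proposition), or alternatively read it off from the explicit product rules for the $s$-basis in \cite{gukalashnikov}.
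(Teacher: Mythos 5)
Your route is genuinely different from the paper's. The paper proves \eqref{eq:comparison1} in one stroke by citing Propositions 2.3 and 2.5 of \cite{PurbhooSottile}: since $\sigma_{\alpha,\emptyset}$ and $\sigma_{\emptyset,\beta}$ are pulled back from Grassmannians, any $\sigma_{\mu,\nu}$ occurring in their product has the inversion set of $w_{\mu,\emptyset}$ contained in that of $w_{\alpha,\emptyset}$, which forces $|\mu|\le|\alpha|$, with equality only for $(\mu,\nu)=(\alpha,\beta)$ and coefficient $1$; then \eqref{eq:comparison2} follows by repeated substitution, exactly your unitriangular inversion. You instead reprove the needed triangularity geometrically via the $\Gr_{a,b}$-bundle $\pi_2$, Leray--Hirsch, and a support/dimension estimate for Gysin pushforwards ($X_{\mu,\nu}\subseteq\pi_2^{-1}(Y_\nu)$, so a component of codimension less than $|\nu|$ supported on $Y_\nu$ must vanish), plus a fiberwise complementary-dimension intersection for the leading coefficient. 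This is more self-contained (no Purbhoo--Sottile input) but longer, and two points need tightening: the pairing $\pi_{2*}(\,\cdot\,\sigma_{(\mu')^\vee,\emptyset})$ is itself only triangular (it also sees $M_{\mu''}$-components with $|\mu''|>|\mu'|$), so the identification of $\gamma_{\mu',\nu}$ with that pushforward should be organized as a descending induction on $|\mu'|$; and the ``single reduced point'' in a general fiber needs a generic-transversality-over-$Y_\nu$ argument, or simply compute the generic degree of $X_{\mu,\nu}\cap gX_{\mu^\vee,\emptyset}\to Y_\nu$ as $\int_{\Gr_{a,b}}\sigma_\mu\cdot\sigma_{\mu^\vee}=1$.

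The one step that does not work as written is your ``Conclusion from the triangularity.'' Extracting $M_\alpha$-components of $s_{\alpha,\beta}=\sum c_{\mu,\nu}\sigma_{\mu,\nu}$ only eliminates terms with $|\mu|\le|\alpha|$ and $(\mu,\nu)\ne(\alpha,\beta)$: when $|\mu|>|\alpha|$, the class $\sigma_{\mu,\nu}$ lies in $G_{|\mu|}\supseteq M_\alpha$ and its $M_\alpha$-component $\sigma_{\alpha,\emptyset}\pi_2^*\gamma_{\alpha,\nu}$ need not vanish, so your parenthetical does not rule those terms out, and nonnegativity of the $c_{\mu,\nu}$ alone does not force them out of the $M_\alpha$-equation (this is exactly the case that Purbhoo--Sottile excludes for the paper). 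The fix is already contained in your proposal: your displayed triangularity, after expanding each $\gamma_{\mu',\nu}$ in Schubert classes of $\Gr_{b,m}$, says $\sigma_{\mu,\nu}=s_{\mu,\nu}+\sum_{|\mu'|<|\mu|}d_{\mu',\nu'}\,s_{\mu',\nu'}$, which is \eqref{eq:comparison2}; inverting this unitriangular change of basis gives $s_{\alpha,\beta}=\sigma_{\alpha,\beta}+\sum_{|\mu|<|\alpha|}c_{\mu,\nu}\sigma_{\mu,\nu}$ with integer coefficients, i.e.\ \eqref{eq:comparison1}, and the nonzero $c_{\mu,\nu}$ are positive because they are Schubert structure constants, as you note at the outset. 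So prove \eqref{eq:comparison2} first and deduce \eqref{eq:comparison1} --- the reverse of your stated order; your deduction of \eqref{eq:topdegree} is fine and agrees with the paper's.
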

\begin{proof} Note that as $s_{\alpha,\beta}=\sigma_{\alpha,\emptyset} \sigma_{\emptyset,\beta}$, the right hand side is simply the expansion in the Schubert basis of this product of two Schubert classes. Both terms in the product can be pulled back from Grassmannians. In particular, we can apply Proposition 2.3 of \cite{PurbhooSottile} to see that if $\sigma_{\mu,\nu}$ appears in the product of $\sigma_{\alpha,\emptyset} \sigma_{\emptyset,\beta}$ with non-zero coefficient, then the associated permutation $w_{\mu,\nu}$ satisfies the following condition: for all $i<j \leq b$, if $w_{\alpha,\emptyset}(i)<w_{\alpha,\emptyset}(j)$ then $w_{\mu,\nu}(i)<w_{\mu,\nu}(j)$.  

Thus the set $\{i<j \leq b \mid w_{\mu,\nu}(i)>w_{\mu,\nu}(j)\}$, which is precisely the inversion set of $w_{\mu,\emptyset}$, is a subset of the inversion set of $w_{\alpha,\emptyset}$. In particular $|\mu| \leq |\alpha|$, or equivalently, $|\nu| \geq |\beta|.$ To complete the proof the lemma, we need to show that if $|\mu|=|\alpha|$, then $\mu=\alpha$, $\nu=\beta$, and that the coefficient in the product of $\sigma_{\alpha,\beta}$ is $1$. Note that 
$\{i<j \leq b \mid w_{\mu,\nu}(i)>w_{\mu,\nu}(j)\}$
is precisely the inversion set of $\alpha$, so if $|\mu|=|\alpha|$, then $w_{\mu,\emptyset}$ and $w_{\alpha,\emptyset}$ have the same inversion set. As the inversion set completely determines the permutation, it follows that $\alpha=\mu.$

To see that $\nu=\beta$, note that Proposition 2.3 of \cite{PurbhooSottile} also implies that if $a<j$, then $w_{\mu,\nu}(j) \leq w_{\emptyset,\beta}(j)$. Since for $b<j$, $w_{\mu,\nu}(j)=w_{\emptyset,\nu}(j)$, it follows that 
 \[w_{\emptyset,\nu}(j) \leq w_{\emptyset,\beta}(j).\]
 This implies that $\beta \subseteq \nu$. Since we have assumed that $|\nu|=|\beta|$, in fact $\beta=\nu$.  
 We have shown that 
 \[s_{\alpha,\beta}=c \sigma_{\alpha,\beta}+\sum_{(\mu,\nu), |\mu| < |\alpha|} c_{\mu,\nu} \sigma_{\mu,\nu}\]
 for some constant $c$. Proposition 2.5 of \cite{PurbhooSottile} implies that $c=1$. 
Repeatedly applying \eqref{eq:comparison1}  gives  \eqref{eq:comparison2}.
Note that the coefficients $d_{\mu,\nu}$ in \eqref{eq:comparison2}. may not be positive. The equality \eqref{eq:topdegree} follows from the two statements.
\end{proof}

\begin{proof}[Proof of Proposition~\ref{pro:comparison}]  Let  $(\alpha^1,\beta^1),\dots,(\alpha^n,\beta^n)$ be $n$ pairs of partitions indexing Schubert classes in $\Fl_{a,b;m}$.

We first state some facts from  \cite{gukalashnikov} about multiplying in the alternative basis. For $s_{\alpha^1,\beta^1}$ and $s_{\alpha^2,\beta^2}$  two basis elements, by the rim-hook removal rule of \cite{gukalashnikov}, the product is governed by Littlewood-Richardson rules and rim-hook removals:
\begin{equation*} s_{\alpha^1,\beta^1} s_{\alpha^2,\beta^2} = \sum_{|\mu|=|\alpha^1|+|\alpha^2|} c^{\mu}_{\alpha^1, 
\alpha^2} c^{\nu}_{\beta^1, \beta^2} \, s_{\mu,\nu} + \sum_{|\mu|=|\alpha^1|+|\alpha^2|}  a_{\mu,\nu}\, s_{\mu,\nu}
\end{equation*}
for some integers $a_{\mu,\nu}$, where both sums are over $(\mu,\nu)$ satisfying $|\mu|+|\nu| = |\alpha^1|+|\alpha^2| +|\beta^1|+|\beta^2|$. Proceeding inductively and using properties of generalized Littlewood-Richardson coefficients, one can show
\begin{equation} \label{eq:gukalashnikov}
\prod_{i=1}^n s_{\alpha^i,\beta^i} = \sum_{|\mu|=\sum |\alpha^i|}  \prod_{i=1}^n c^{\mu}_{\alpha^\bullet} c^{\nu}_{\beta^\bullet} \, s_{\mu,\nu} + \sum_{|\mu|<\sum |\alpha^i|}  \tilde{a}_{\mu,\nu}\, s_{\mu,\nu}
\end{equation}
for some integers $\tilde{a}_{\mu,\nu}$, where both sums are over   $(\mu,\nu)$ satisfying $|\mu|+|\nu| =\sum |\alpha^i|+ |\beta^i|$. Note that  no summands appear where $|\mu|>\sum  |\alpha^i|$, or equivalently where $|\nu|<\sum |\beta^i|$.

 By \eqref{eq:comparison1}, \eqref{eq:comparison2} and \eqref{eq:gukalashnikov}, we obtain
 \begin{equation}
 \label{eq:prodschub}\prod_{i=1}^n \sigma_{\alpha^i,\beta^i}  = \sum_{|\mu|=\sum |\alpha^i|}  c^{\mu}_{\alpha^\bullet } c^{\nu}_{\beta^\bullet} \, s_{\mu,\nu}+\sum_{|\mu|<\sum |\alpha^i|} \tilde{d}_{\mu,\nu}\, s_{\mu,\nu}
 \end{equation}
for some integers $ \tilde{d}_{\mu,\nu}$, where both sums are over   $(\mu,\nu)$ satisfying $|\mu|+|\nu| =\sum |\alpha^i|+ |\beta^i|$. 

When
$\sum  |\alpha^i|+|\beta^i|=a(b-a)+b(m-b)$, the intersection number in the statement of the proposition can be read from \eqref{eq:prodschub} as the coefficient of the top degree class, namely of $\sigma_{(a^{b-a},b^{m-b})}= s_{(a^{b-a},b^{m-b})}$ (see \eqref{eq:topdegree}). By assumption, $\sum |\alpha_i| \leq a(b-a)$, so this top degree class does not appear  in the second summand of \eqref{eq:prodschub}, and therefore the intersection number is equal to  $c^{(a^{b-a)}}_{\alpha^\bullet } c^{(b^{m-b})}_{\beta^\bullet}$, which is in turn equal to the desired product of intersection numbers. The final statement of the proposition follows from the fact that $c^{(a^{b-a)}}_{\alpha^\bullet }=0$ if $\sum |\alpha^i| < a(b-a)$.
\end{proof}

  \section{$F$-curves and their intersections with divisors}
In our analysis of GW-divisors and the first Chern classes of critical level CB bundles, we compare their intersections with a set of curves in $\M_{0,n}$, defined next.

\begin{definition}\label{FCurves}
An $F$-curve  on $\M_{g,n}$ is the numerical equivalence class of an irreducible component of a one-dimensional component of the boundary.  
\end{definition}

$F$-Curves  on $\M_{0,n}$ are 
parametrized by  partitions $[n]=\{1,\ldots,n\} = N_1 \cup N_2 \cup N_3 \cup N_4$ as follows. For $i = 1, \ldots, 4$, let $X_{N_i}=(\mathbb{P}^1, P_{N_i}^\bullet \cup \alpha'_i) \in \op{M}_{0,|N_i| +1}$, be four fixed points. Define a map $\M_{0,4} \to \M_{0,n}$ by sending a point $X=(C,\alpha_{\bullet})\in \M_{0,4}$ to the $n$-pointed curve obtained by gluing the curve $X_{N_i}$ to $X$ by attaching $\alpha_i$ to the point $\alpha'_i$ for each $i\in \{1,\ldots,4\}$.  The $F$-curve, denoted $F_{N_1 N_2 N_3 N_4}$,  is defined to be the numerical equivalence class of the image of this map.  
The $F$-curves span $H_2(\M_{0,n}, \mathbb{Q})$, and are conjectured to span the extremal rays of the cone of curves on $\ovop{M}_{g,n}$.  This is known for $g=0$ and $n\le 7$, and for $n=0$ and $g\le 24$  \cite{gkm}, \cite{KeelMcKernan}, \cite{GibneyCompositio}.

Each component of the boundary is the surjective image of a morphism from a product of moduli spaces. To 
compute the intersection of the divisor with an $F$-curve, one pulls back the first Chern classes along these clutching morphisms: For $N_1 \subset \{1,\ldots, n\}$ a nonempty set, let $X_{N_1}=(\mathbb{P}^1,P^{\bullet}_{N_1}\cup \alpha_{N_1})\in \M_{0,|N_1|+1}$ be a smooth $|N_1|+1$-pointed rational curve, and define a morphism
\begin{equation}\label{Clutch}
\M_{0,|N_1^C|+1} \overset{F_{N_1}}{\twoheadrightarrow} \Delta_{N_1} \hookrightarrow \M_{0,n},
\end{equation}
attaching $X_{N_1}$ to a point $(C,P^{\bullet}_{N^C_1}\cup \alpha'_{N_1})\in \M_{0,|N^C_1|+1}$ by gluing $\alpha_{N_1}$ to $\alpha'_{N_1}$.
\begin{lemma}\label{CLELemma} Let $[n]= N_1 \cup N_2 \cup N_3 \cup N_4$  define an $F$-curve $F_{N_1,N_2,N_3,N_4}$. Then
\begin{multline}\label{CLEb}  c_1(\mathbb{V}(\sL_{r+1},\lambda^{\bullet}, l)) \cdot F_{N_1,N_2,N_3,N_4} \\
 = \sum_{\nu^{\bullet}} c_1(\mathbb{V}(\sL_{r+1},\nu^{\bullet}, l)) \ 
  \prod_{1\le j \le 4}{\rm{Rk}}(\mathbb{V}(\sL_{r+1}, \lambda(N_j)^{\bullet}\cup (\nu^j)^*, l)),\end{multline}
where one sums over  4-tuples $\nu^{\bullet}=\{\nu^j\}_{j=1}^4$ of  partitions of $\Gr_{r,r+l}$,  and   $(\nu^j)^*$ is defined to be the complement of $\nu^j$ in the $(r+1)\times \nu^j_1$ rectangle.
Each summand  is zero unless 
\begin{equation}\label{Constraint}
|\nu^i|=\sum_{j\in N_i}|\lambda^j|, \ \mbox{ for all } \ i \in \{1,2,3,4\}.
\end{equation}
\end{lemma}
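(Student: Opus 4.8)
The plan is to pull $c_1(\mathbb{V}(\sL_{r+1},\lambda^{\bullet},l))$ back along the gluing morphism that cuts out the $F$-curve and then apply factorization. Recall from Definition~\ref{FCurves} and \eqref{Clutch} that $F_{N_1,N_2,N_3,N_4}$ is, by definition, $\gamma_*[\M_{0,4}]$, where $\gamma\colon\M_{0,4}\to\M_{0,n}$ attaches the four fixed pointed curves $X_{N_1},\dots,X_{N_4}$ to the four moving marked points. Hence, by the projection formula and the identification $\M_{0,4}\cong\mathbb{P}^1$ (so that $c_1$ of a bundle on $\M_{0,4}$ is identified with its degree),
\[
c_1\bigl(\mathbb{V}(\sL_{r+1},\lambda^{\bullet},l)\bigr)\cdot F_{N_1,N_2,N_3,N_4}
=\deg_{\M_{0,4}}c_1\bigl(\gamma^{*}\mathbb{V}(\sL_{r+1},\lambda^{\bullet},l)\bigr),
\]
so it suffices to compute the bundle $\gamma^{*}\mathbb{V}(\sL_{r+1},\lambda^{\bullet},l)$ on $\M_{0,4}$.

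The universal $\gamma$-curve over $\M_{0,4}$ has four disjoint sections of nodes. Applying the factorization property of \cite{tuy} in families, once at each of these four nodes, gives a direct sum decomposition
\[
\gamma^{*}\mathbb{V}(\sL_{r+1},\lambda^{\bullet},l)\ \cong\
\bigoplus_{\nu^{\bullet}}\
\mathbb{V}(\sL_{r+1},\nu^{\bullet},l)\ \otimes\
\bigotimes_{j=1}^{4}\mathbb{V}\bigl(\sL_{r+1},\lambda(N_j)\cup(\nu^j)^{*},l\bigr)\big|_{X_{N_j}},
\]
the sum over $4$-tuples $\nu^{\bullet}=(\nu^1,\dots,\nu^4)$ of partitions inside an $r\times l$ rectangle, where $(\nu^j)^{*}$ is the module dual to $\nu^j$ (its complement in the $(r+1)\times\nu^j_1$ rectangle), $\mathbb{V}(\sL_{r+1},\nu^{\bullet},l)$ is the coinvariants bundle on $\M_{0,4}$ carrying the weights $\nu^j$ at the four points, and each $\mathbb{V}(\sL_{r+1},\lambda(N_j)\cup(\nu^j)^{*},l)\big|_{X_{N_j}}$ is the fixed vector space of dimension ${\rm Rk}(\mathbb{V}(\sL_{r+1},\lambda(N_j)\cup(\nu^j)^{*},l))$. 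I would then take $c_1$, using additivity of $c_1$ over direct sums together with $c_1(A\otimes\mathcal{O}^{\oplus m})=m\,c_1(A)$ and the triviality of the tail factors, and take degrees over $\M_{0,4}$; this yields exactly \eqref{CLEb}.

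It remains to prove the vanishing \eqref{Constraint}. Put $a_j=\sum_{i\in N_j}|\lambda^i|$ and $b_j=|\nu^j|$, so $\sum_j a_j=\sum_i|\lambda^i|=(r+1)(l+1)$. A summand is nonzero only if all four tail ranks are nonzero and $c_1(\mathbb{V}(\sL_{r+1},\nu^{\bullet},l))\ne0$ on $\M_{0,4}$. For the $j$-th tail to be nonzero we need $(r+1)\mid(a_j-b_j)$, and then its modules $\lambda(N_j)\cup(\nu^j)^{*}$ have total size $(r+1)(l+s_j)$ with $s_j=\nu^j_1-l+\tfrac{a_j-b_j}{r+1}\in\mathbb{Z}$. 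If $s_j>0$ then already $\tfrac{a_j-b_j}{r+1}>l-\nu^j_1\ge0$, so $b_j<a_j$. If $s_j\le0$, part (1) of Theorem~\ref{WD} computes the tail rank as an intersection number on $\Gr_{r+1,\,r+1+l+s_j}$, which vanishes unless $(\nu^j)^{*}$, whose first part is $\nu^j_1$, fits in the $(r+1)\times(l+s_j)$ rectangle, i.e.\ unless $\nu^j_1\le l+s_j$, equivalently $b_j\le a_j$. So a nonzero $j$-th tail forces $b_j\le a_j$ in every case. On the other hand, by the above--critical--level vanishing \cite[Prop~1.3]{BGMA}, $c_1(\mathbb{V}(\sL_{r+1},\nu^{\bullet},l))\ne0$ on $\M_{0,4}$ requires $l\le c(\sL_{r+1},\nu^{\bullet})=-1+\tfrac1{r+1}\sum_j b_j$, i.e.\ $\sum_j b_j\ge(r+1)(l+1)=\sum_j a_j$. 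Combining $\sum_j a_j\le\sum_j b_j$ with $b_j\le a_j$ for all $j$ forces $b_j=a_j$ for every $j$, which is \eqref{Constraint}.

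The main obstacle is the factorization step: one must invoke factorization in families at the four nodes simultaneously and keep careful track of the dualization $(\nu^j)^{*}$, so that the tail modules $\lambda(N_j)\cup(\nu^j)^{*}$ and the $\M_{0,4}$-weights $\nu^{\bullet}$ match the statement. Once that is set up, \eqref{CLEb} is a formal consequence of the linearity of $c_1$, and \eqref{Constraint} is the short application of Witten's Dictionary and critical-level vanishing given above.
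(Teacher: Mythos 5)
Your proposal is correct and follows essentially the same route as the paper: pull back along the clutching morphism onto the $F$-curve, apply the factorization theorem of \cite{tuy} (the paper iterates one node at a time, you factor at all four nodes at once), and rule out the terms violating \eqref{Constraint} using Witten's Dictionary together with above-critical-level vanishing of $c_1$ from \cite[Prop.~1.3]{BGMA}. The only difference is bookkeeping: the paper argues node by node (if $|\nu^j|$ is too small the Chern-class factor of that summand vanishes, if too large the rank factor vanishes via the width argument), whereas you extract $|\nu^j|\le\sum_{i\in N_j}|\lambda^i|$ from nonvanishing of the tail ranks and $\sum_j|\nu^j|\ge(r+1)(l+1)$ from nonvanishing of the central bundle on $\overline{\operatorname{M}}_{0,4}$ and compare sums---an equivalent argument using the same ingredients.
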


The formula \eqref{CLEb} is well-known (see \cite{fakhr}, \cite{BG}). For completeness, we provide a proof, which also establishes
\eqref{Constraint}.
 This uses factorization, and the observation, known for some time, that boundary restrictions of  bundles at the critical level remain at or above the critical level.  

\begin{proof}To compute the intersection of $c_1(\mathbb{V}(\mathfrak{g},\lambda^{\bullet},l))$ with $F_{N_1,N_2,N_3,N_4}$, one pulls back the divisor along a composition of clutching maps as depicted in \eqref{Clutch}.
First, pulling back along $F_{N_1}$, we apply the factorization theorem \cite{tuy}, to obtain
\begin{multline}\label{PullBack}
F_{N_1}^*c_1(\mathbb{V}(\sL_{r+1}, \lambda^\bullet, l))\\
=\oplus_{\nu^1}
c_1(\mathbb{V}(\sL_{r+1}, \nu^1\cup \lambda(N_1^C)^{\bullet}, l )) {\rm{Rk}}(\mathbb{V}(\sL_{r+1}, (\nu^1)^{*} \cup \lambda(N_1)^{\bullet}, l)),
\end{multline}
where here we sum over partitions $\nu^1$ in an an $r\times l$ rectangle.  
In particular, to have bundles with nontrivial ranks and Chern classes, $r+1$ divides the total area of the partitions defining the modules for each bundle.  In other words:
\begin{equation}
\sum_{j\in N_1^c}|\lambda^j|+|\nu^1|=(r+1)(l+s_{1}), \ \mbox{ and } \sum_{j\in N_1}|\lambda^j|+|(\nu^1)^*|=(r+1)(l+s'_{1}).
\end{equation}
From  $\sum_{i\in [n]}|\lambda^i| =(r+1)(l+1)$ and $ |\nu^1| + |(\nu^1)^*| = (r+1)\nu^1_1$, we obtain
\[ |\nu^1| =  \sum_{i\in N_1}|\lambda^i| +(r+1)(s_1-1)\]
and $\nu^1_1=l+s_1+s'_1-1\leq l$ so that $s_{1}+s_{1}' \le 1$.

We wish to show that each summand of \eqref{PullBack} is zero unless $|\nu^1| = \sum_{i\in N_1}|\lambda^i|$. If $|\nu^1| < \sum_{i\in N_1}|\lambda^i|$,   then $s_1<1$, and the first Chern class component of the summand is $0$.
Note that if $|\nu^1|= \sum_{i\in N_1}|\lambda^i|$, then 
 $s_1=1$, and the first Chern class is at the critical level.
 
It remains to consider the summand in  \eqref{PullBack} when $|\nu^1| > \sum_{i\in N_1}|\lambda^i|$ and $s_1>1$. We will show that the rank component of the summand is zero:
 \begin{equation}\label{R1}R_1={\rm{Rk}}(\mathbb{V}(\sL_{r+1}, (\nu^1)^{*} \cup \lambda(N_1)^{\bullet}, l))=0.
 \end{equation}
By Witten's Dictionary, we may use a classical cohomology computation for $R_1$ since  \[|(\nu^1)^{*}|+\sum_{i\in N_1}|\lambda^i|=(r+1)(l+1-s_1)<(r+1)(l).\]
With $\lambda(N_1)=\{\gamma^1,\ldots, \gamma^k\}$, then $R_1$ is the coefficient of $\sigma_{((l+1-s_1)^{r+1})}$  in the product
\[\sigma_{\gamma^1}\cdot\sigma_{\gamma^2}\cdot\dots\cdot\sigma_{\gamma^k}  \cdot\sigma_{(\nu^1)^*} \in {H}^*\Gr_{r+1,r+1+l+s'_1}.\]
In this case, $\nu^1_1=l+s_1+s_1'-1>l+s_1'$ and $(\nu^1)^*$ has width $\nu_1^1$, so its cohomology class and hence the intersection is zero.

Since $[n] = N_1 \cup N_2 \cup N_3 \cup N_4$ is a partition into four nonempty sets, $N_2\subset N_1^C$. For $X_{N_2}=(\mathbb{P}^1,P_{N_2}^{\bullet},\alpha_{N_2})$ a point in $\M_{0,|N_2|+1}$, we can define a clutching map
\begin{equation}\label{Clutch2}
\M_{0,(|(N_1\cup N_2)^C|+2} \overset{F_{N_1, N_2}}{\twoheadrightarrow} \Delta_{N_2}\cap \Delta_{N_1} \hookrightarrow \M_{0,n},
\end{equation}
attaching the two points $X_{N_1}=(\mathbb{P}^1,P_{N_1}^{\bullet},\alpha_{N_1})$, and $X_{N_2}=(\mathbb{P}^1,P_{N_2}^{\bullet},\alpha_{N_2})$ to the point \[(C,P^{\bullet}_{|(N_1\cup N_2)^C|}\cup \alpha'_{N_1}\cup \alpha'_{N_2}) \in \M_{0,|(N_1\cup N_2)^C|+2}\] by identifying $\alpha_{N_2}$ and $\alpha'_{N_2}$. Factorization  gives $F_{N_1,N_2}^*(c_1(\mathbb{V}(\sL_{r+1},  \lambda^{\bullet}, l ))$ as a sum of divisors $c_1(\mathbb{V}(\sL_{r+1}, \nu^1\cup \nu^2 \cup  \lambda((N_1\cup N_2)^C)^{\bullet}, l ))$ that are at or above the critical level, with coefficients
\[{\rm{Rk}}(\mathbb{V}(\sL_{r+1}, (\nu^1)^{*} \cup \lambda(N_1)^{\bullet}, l)){\rm{Rk}}(\mathbb{V}(\sL_{r+1}, (\nu^2)^{*} \cup \lambda(N_2)^{\bullet}, l)),\] 
parametrized by partitions $\nu^1$ and $\nu^2$.  If both $|\nu^1|= \sum_{i\in N_1}|\lambda^i|$ and $|\nu^2|= \sum_{i\in N_2}|\lambda^i|$, then 
 the Chern class is at the critical level.  For this one checks $F_{N_1,N_2}^*(c_1(\mathbb{V}(\sL_{r+1},  \lambda^{\bullet}, l ))$ is a composition of clutching maps, and makes an analogous argument. Iterating, since $F_{N_1,N_2,N_3,N_4}$ represents the numerical equivalence class of a one dimensional component of $\Delta_{N_4} \cap \Delta_{N_3} \cap \Delta_{N_2}\cap \Delta_{N_1}$,
there is a clutching  map 
\begin{equation}\label{Clutch4}
 \M_{0,4}\overset{F_{N_{\bullet}}}{\twoheadrightarrow} F_{N_1,N_2,N_3,N_4}  \hookrightarrow \M_{0,n},
\end{equation}
attaching four fixed points $X_{N_i}=(\mathbb{P}^1,P_{N_i}^{\bullet}\cup \alpha^{N_i})\in \M_{0,|N_i|+1}$, to an arbitrary point $(C,Q^{\bullet})\in \M_{0,4}$ by identifying $\alpha^{N_i}$ and $Q^i$.
By Factorization,  $F_{N_{\bullet}}^*(c_1(\mathbb{V}(\sL_{r+1},  \lambda^{\bullet}, l ))$ is a sum of divisors $c_1(\mathbb{V}(\sL_{r+1}, \nu^1\cup \nu^2 \cup  \nu^3 \cup \nu^4, l ))$ that are at or above the critical level, with coefficients
\[\Pi_{j=1}^4{\rm{Rk}}(\mathbb{V}(\sL_{r+1}, (\nu^j)^{*} \cup \lambda(N_j)^{\bullet}, l)).\]
If $|\nu^j|= \sum_{i\in N_j}|\lambda^i|$, then 
 the Chern class is at the critical level.
\end{proof}

\begin{remark}\label{RankZero} It is well known that if $\sum_{i=1}^n|\lambda^i|$ is not divisible by $r+1$, then the rank of the bundle $\mathbb{V}(\sL_{r+1}, \lambda_{\bullet}, l)$ is zero.  This follows by induction on $n$ using the factorization theorem with base cases $n\in \{1,2,3\}$.
For  $n\in \{1,2\}$, the assertion is given by the fusion rules \cite[Cor~4.4]{BeaVer}.  For $n=3$, there are different ways to obtain the result.  For instance, one can also use the fusion rules, as is done in \cite[Proposition~3.4.]{ags} to show the claim for $\sL_2$ (see \cite[\S 5]{BeaVer}), although there isn't a closed form for these and one has to work them out for each $r$.  Alternatively, one may use \cite[Proposition~3.23]{tuy}, in which it is shown that there is a surjection from the 
 constant bundle $\mathbb{A}(\sL_{r+1}, \{\lambda_1,\lambda_2, \lambda_3\}, l)$, determined by the $\sL_{r+1}$-modules given by the partitions $\lambda_i$ onto
$\mathbb{V}(\sL_{r+1}, \{\lambda_1,\lambda_2, \lambda_3\}, l)$. Since defined on $\overline{\operatorname{M}}_{0,3}$, which is isomorphic to a point,  these are vector spaces. The vector space $\mathbb{A}(\sL_{r+1}, \{\lambda_1,\lambda_2, \lambda_3\}, l)$ is isomorphic to the tensor product of the highest weight $\sL_{r+1}$-modules determined by the partitions $\lambda_i$ \cite{ZhuModular}, which can be shown to be trivial unless $r+1$ divides $\sum_{i=1}^3|\lambda^i|$. 
  \end{remark}

\subsection*{Acknowledgements} This project began in July 2020 at the (virtual) ICERM Women in Algebraic Geometry workshop,  and we thank Melody Chan, Antonella Grassi, Rohini Ramadas, Julie Rana, and Isabel Vogt for organizing that workshop.  We  thank Anders Buch and Edward Richmond for helpful conversations.  We also thank Prakash Belkale for comments on the manuscript and note that many observations about critical level bundles originate from discussions with him.

\bibliography{Biblio}

\end{document}